\newcommand{\be}{\begin{equation}}
\newcommand{\ee}{\end{equation}}
\newcommand{\bea}{\begin{eqnarray}}
\newcommand{\eea}{\end{eqnarray}}
\newcommand{\beas}{\begin{eqnarray*}}
\newcommand{\eeas}{\end{eqnarray*}}
\newtheorem{theorem}{Theorem}[section]
\newtheorem{definition}[theorem]{Definition}
\newtheorem{proposition}[theorem]{Proposition}
\newtheorem{corollary}[theorem]{Corollary}
\newtheorem{lemma}[theorem]{Lemma}
\newtheorem{remark}[theorem]{Remark}
\newtheorem{example}[theorem]{Example}
\newtheorem{examples}[theorem]{Examples}
\newtheorem{foo}[theorem]{Remarks}
\newenvironment{Remark}{\begin{remark}\rm}{\end{remark}}
\newenvironment{proof}{\addvspace{\medskipamount}\par\noindent{\it Proof}.}
{\unskip\nobreak\hfill$\Box$\par\addvspace{\medskipamount}}
\newcommand{\brak}[1]{\left(#1\right)}    
\newcommand{\crl}[1]{\left\{#1\right\}}   
\newcommand{\edg}[1]{\left[#1\right]}     
\newcommand{\p}{\mathbb{P}}
\newcommand{\E}[1]{{\mathbb{E}}\left[#1\right]}
\newcommand{\N}[1]{\left\|#1 \right\|}     
\DeclareMathOperator{\cs}{(\Sigma)}
\DeclareMathOperator{\cd}{(D)}
\DeclareMathOperator{\csd}{(\Sigma D)}
\title{Processes of class $\cs$, last passage times and drawdowns}
\author{Patrick Cheridito\thanks{Supported by NSF Grant DMS-0642361}\\
Princeton University\\ Princeton, NJ, USA
\and Ashkan Nikeghbali\\
Universit\"at Z\"urich\\
Z\"urich, Switzerland
\and Eckhard Platen\\ University of Technology\\ Sydney, Australia}
\date{October 2009}
\begin{document}

\maketitle

\begin{abstract}
We propose a general framework to study last passage times,
suprema and drawdowns of a large class of stochastic processes.
A central role in our approach is played by processes of class $\cs$.
After investigating convergence properties and a family of
transformations that leave processes of class $\cs$ invariant, we provide three
general representation results. The first one allows to recover
a process of class $\cs$ from its final value and the last time it
visited the origin. In many situations this gives access to the distribution
of the last time a stochastic process hit a certain level or was equal to its
running maximum. It also leads to a formula recently discovered by
Madan, Roynette and Yor expressing put option prices
in terms of last passage times. Our second representation result
is a stochastic integral representation of certain functionals of processes
of class $\cs$, and the third one gives a formula for their
conditional expectations. From the latter one can deduce the laws of a variety of
interesting random variables such as running maxima, drawdowns and
maximum drawdowns of suitably stopped processes.
As an application we discuss the pricing and hedging of options that depend
on the running maximum of an underlying price process and are
triggered when the underlying price drops to a given level or alternatively, when the
drawdown or relative drawdown of the underlying price attains a given height.\\[2mm]
{\bf Keywords:} Processes of class $\cs$, last passage times, drawdowns,
maximum drawdowns, relative drawdowns.
\end{abstract}

\noindent
{\bf Notation:}
Let $(\Omega, {\cal F}, ({\cal F}_t)_{t \ge 0}, \p)$ be a filtered
probability space satisfying the usual assumptions.
All stochastic processes $(X_t)$ will be indexed by $t \in \mathbb{R}_+$.
$\overline{X}_t$ denotes the running supremum $\sup_{u \le t} X_u$.
All semimartingales will be assumed to be c\`adl\`ag. Equalities and inequalities between
random variables are understood in the $\p$-almost sure sense.
We recall that a measurable process $(X_t)$ is said to be of class $\cd$ if
the family of random variables $\crl{|X_T| 1_{\crl{T<\infty}} : T \mbox{ a stopping time}}$
is uniformly integrable.

\setcounter{equation}{0}
\section{Introduction}

We propose a general framework to study various properties of continuous-time
stochastic processes which is based on the concept of
processes of class $\cs$. Non-negative local submartingales of class $\cs$ were
introduced by Yor \cite{yorinegal} and have been further studied by Nikeghbali
\cite{NEnl, NSub}. They are closely related to relative martingales
(see for instance, Az\'ema and Yor \cite{AYzeros} or Az\'ema et al. \cite{AMY}).
Compared to Yor \cite{yorinegal} and Nikeghbali \cite{NEnl, NSub}
we here work with the following larger class of stochastic processes:

\begin{definition}
We say a stochastic process $(X_t)$ is of class $\cs$ if it
decomposes as $X_t = N_t + A_t$, where\\[2mm]
\hspace*{5mm} {\rm (1)} $(N_t)$ is a c\`adl\`ag local martingale,\\[1mm]
\hspace*{5mm} {\rm (2)} $(A_t)$ is an adapted continuous finite variation
process starting at $0$,\\[1mm]
\hspace*{5mm} {\rm (3)} $\int_0^t 1_{\crl{X_u \neq 0}} dA_u = 0$ for all $t \ge 0$.\\[2mm]
We say $(X_t)$ is of class $\csd$ if it is of class $\cs$ and
of class $\cd$. By $L$ we denote the random time
$$
L := \sup \crl{t : X_t = 0} \quad \mbox{with the convention }
\sup \emptyset = 0.
$$
\end{definition}

According to this definition, every local martingale is of class $\cs$
and all uniformly integrable martingales are of class $\csd$.
In Lemma \ref{lemmasub} below it is shown that if a process $(X_t)$ of class $\cs$
is non-negative, then $(A_t)$ has to be increasing and $(X_t)$ is a local submartingale.
This case includes the absolute value $(|M_t|)$ of continuous local martingales
$(M_t)$ as well as drawdown processes $(\overline{M}_t - M_t)$ of local martingales
whose running suprema $(\overline{M}_t)$ are continuous. It will also follow from
Lemma \ref{lemmasub} that for every constant $K \in \mathbb{R}$, the process
$(K-M_t)^+$ is of class $\cs$ if $(M_t)$ is a local martingale with no positive jumps.
Many other processes, such as suitably transformed diffusions or the
Az\'ema submartingale in the filtration generated by the Brownian zeros,
fall into the class $\cs$. Lemma \ref{lemmaprod} below
shows that the product of processes of class $\cs$
with vanishing quadratic covariation is again of class $\cs$, and
in Lemma \ref{lemmatrans} it is proved that the class $\cs$
is stable under transformations of the form $(X_t) \mapsto (f(A_t)X_t)$, where
$f : \mathbb{R} \to \mathbb{R}$ is a locally bounded Borel function.

We show how to use It\^{o}'s formula and martingale techniques
to derive general results for processes of class $\cs$
with interesting consequences for a wide range of stochastic
processes and related random times. In particular, our arguments
do not need Markov or scaling properties. In Section \ref{secPrel} we
provide preliminary results on the convergence, positive parts and
products of processes of class $\cs$. Then we extend two results on
processes of the form $f(A_t)X_t$ from Nikeghbali \cite{NSub} to our setup.
In Section \ref{secRep} we prove three general representation results.
The first gives conditions under which a process $(X_t)$
of class $\cs$ converges to a limit $X_{\infty}$
almost everywhere on the set $\crl{L < \infty}$ and can be written as
\be \label{repXL}
X_t = \E{X_{\infty} 1_{\crl{L \le t}} \mid {\cal F}_t}, \quad t \ge 0.
\ee
Similarly to the case of uniformly integrable martingales, which are always of the form
$M_t = \E{M_{\infty} \mid {\cal F}_t}$, formula \eqref{repXL} represents $(X_t)$ in terms
of its final value and the last time it visited zero.
If $X_{\infty} = 1$, one can take expectations on both sides of \eqref{repXL} to obtain
$\E{X_t} = \p[L \le t]$. In situations where $\E{X_t}$ can be calculated, this give access to the
distribution of the random time $L$; see Nikeghbali and Platen \cite{NP} for examples.
If $(M_t)$ is a non-negative local martingale without positive jumps such that
$M_t \to 0$ almost surely, then for all $K \in \mathbb{R}_+$,
the submartingale $(K-M_t)^+$ is of class
$\cs$ with last zero $g_K = \sup \crl{t : M_t \ge K}$. So as a special case
of \eqref{repXL}, one obtains the following formula of Madan et al. \cite{MRY}:
\be \label{MRYform}
\E{(K-M_t)^+} = K \p[g_K \le t], \quad t \ge 0.
\ee
Our second representation result shows how to write random variables of the form
$h(A_{\infty})$ as stochastic integrals with respect to $(N_t)$, and the third one
gives a formula for $\E{h(A_{\infty}) \mid {\cal F}_T}$, where
$T$ is an arbitrary stopping time. This leads to closed form expressions for the
conditional distributions of $A_{\infty}$.
In the first part of Section \ref{secDD} we apply the results of Section \ref{secRep} to
study drawdowns $DD_t = \overline{M}_t - M_t$ and relative drawdowns
$rDD_t = 1-M_t/\overline{M}_t$ of local martingales $(M_t)$.
We provide formulas for the distributions
of $\overline{M}_{T_{\lambda}}$ for stopping times of the form
$T_{\lambda} = \inf \crl{t : M_t = \lambda(\overline{M}_t)}$ and for the random times
$g_{\lambda} = \sup \crl{t \le T_{\lambda} : M_t = \overline{M}_t}$, where
$\lambda$ is a Borel function.
We are also able to calculate the distributions of the maximum drawdown
$\sup_{T \le t \le T_K} DD_t$ and maximum relative drawdown $\sup_{T \le t \le T_K} rDD_t$
for arbitrary stopping times $T$ and
hitting times of the form $T_K = \inf \crl{t \ge T : Y_t = K}$.
In the second part of Section \ref{secDD} we extend these results to
processes $(Y_t)$ which admit a continuous increasing function
$s$ such that $s(Y_t)$ is a local martingale.
In Section \ref{secAppl} we discuss applications
in financial modelling and risk management. First we discuss the
pricing and hedging of options that depend on the running maximum
of an underlying price process $(M_t)$ and are triggered when $(M_t)$
drops to a prespecified level $c \in [0,M_0)$ or when $DD_t$ or $rDD_t$
reach a value $c > 0$. Then we calculate distributions of various random variables
related to diffusions of the form $dY_t = \mu(Y_t) dt + \sigma(Y_t) dB_t$.
An earlier result of Lehoczky \cite{L} appears as a special case.

\setcounter{equation}{0}
\section{Preliminaries}
\label{secPrel}

We start by studying positive and negative parts of processes
of class $\cs$, non-negative processes of class $\cs$ and the
convergence of $X_t$ for $t \to \infty$.

\begin{lemma} \label{lemmasub}
Let $(X_t)$ be a process of class $\cs$. Then
\begin{itemize}
\item[{\rm (1)}] $(X^+_t)$ and $(X^-_t)$ are local submartingales.
\item[{\rm (2)}] If $(X_t)$ has no negative jumps, then
$(X^+_t)$ is again of class $\cs$. If $(X_t)$ has no positive jumps, then
$(X^-_t)$ is of class $\cs$.
\item[{\rm (3)}] If $(X_t)$ is non-negative, then it is a local
submartingale with $A_t = \sup_{u \le t} (-N_u) \vee 0$.
\item[{\rm (4)}] If $(X_t)$ is of class $\csd$, then
$(N_t)$ is a uniformly integrable martingale and $(A_t)$ of
integrable total variation; in particular, there exist integrable
random variables $X_{\infty}, N_{\infty}, A_{\infty}$ such that
$X_t \to X_{\infty}$, $N_t \to N_{\infty}$, $A_t \to A_{\infty}$ almost surely and
in $L^1$.
\end{itemize}
\end{lemma}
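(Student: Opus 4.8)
The plan is to handle all four assertions with a single toolkit built around the canonical splitting $X = N + A$, the Tanaka--Meyer formula, and the defining fact that the measure $dA$ is carried by $\crl{X=0}$; for part (4) I would add the Doob--Meyer decomposition. A remark used repeatedly: since $A$ is continuous, the measure $dA$ has no atoms, so it ignores the (countable) set of jump times of $N$, and one may freely replace $X_{s-}$ by $X_s$ inside any $dA$-integral. \textbf{Part (1).} I would apply the Tanaka--Meyer formula
\[
X^+_t = X^+_0 + \int_0^t 1_{\{X_{s-}>0\}}\,dX_s + \tfrac{1}{2}L^0_t + \sum_{s\le t}\left(X^+_s - X^+_{s-} - 1_{\{X_{s-}>0\}}\Delta X_s\right),
\]
split $dX = dN + dA$, and argue that $\int_0^t 1_{\{X_{s-}>0\}}\,dA_s = 0$: by the remark above the integrand may be read as $1_{\{X_s>0\}}$, and property (3) kills the integral since $\crl{X_s>0}\subseteq\crl{X_s\neq0}$. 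The remaining drift is $\tfrac12 L^0_t$ plus the jump sum, both nondecreasing (each summand is $\ge0$ by convexity of $x\mapsto x^+$), while the integral against $N$ is a local martingale; hence $X^+$ is a local submartingale. Applying this to $-X$, which is again of class $\cs$, gives the claim for $X^-$.

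\textbf{Part (2).} Here I would refine the Tanaka expansion. With no negative jumps one has $\Delta X = \Delta N \ge 0$, and a short computation shows the jump sum collapses to the contributions of the upward crossings $\crl{X_{s-}\le 0 < X_s}$. The main obstacle is that class $\cs$ demands a \emph{continuous} finite-variation part, whereas these crossings are honest jumps. I would resolve this by first noting that a local martingale with no negative jumps is quasi-left-continuous: at a predictable time its jump would be a nonnegative random variable with vanishing $\mathcal F_{\cdot-}$-conditional expectation, hence zero. Consequently all jumps of $X$ occur at totally inaccessible times, so the increasing process compensating the crossing jumps is continuous. Folding the compensated crossing jumps into the martingale part, and the continuous compensator together with $\tfrac12 L^0_t$ into the finite-variation part, I would then verify that this finite-variation part is continuous, increasing, and carried by $\crl{X\le0}=\crl{X^+=0}$, which are exactly the defining properties of class $\cs$ for $X^+$. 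The statement for $X^-$ follows by symmetry from $-X$.

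\textbf{Part (3).} Now $X\ge0$, and the three defining conditions say precisely that $A$ is continuous, null at $0$, with $X = N+A\ge0$ and $dA$ carried by $\crl{X=0}$; this is the Skorokhod reflection problem for the path of $N$. I would first rule out any decrease of $A$: on an interval where $X\equiv0$ one has $dN = -dA$, so there $N$ is continuous of finite variation and hence constant, forcing $A$ constant; together with the reflection constraint this makes $A$ nondecreasing. The usual Skorokhod argument then applies --- monotonicity gives $A_t\ge(-N_u)^+$ for every $u\le t$, so $A_t\ge\sup_{u\le t}(-N_u)\vee0$, and the flatness of $A$ off $\crl{X=0}$ forces equality --- yielding $A_t=\sup_{u\le t}(-N_u)\vee0$. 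Since $A$ is increasing and $N$ is a local martingale, $X=N+A$ is a local submartingale.

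\textbf{Part (4).} Assume $X$ is of class $\csd$. By (1) the processes $X^+$ and $X^-$ are local submartingales, and since $X$ is of class $\cd$ so are $X^+,X^-$ (both dominated by $|X|$); a nonnegative local submartingale of class $\cd$ is a genuine submartingale of class $\cd$, so Doob--Meyer gives $X^{\pm}=M_i+B_i$ with $M_i$ a uniformly integrable martingale and $B_i$ predictable, increasing, and integrable. Subtracting yields $X=(M_1-M_2)+(B_1-B_2)$, a splitting into a uniformly integrable martingale and a predictable process of integrable variation. Because $A$ is continuous, hence predictable, uniqueness of the canonical decomposition of a special semimartingale (a predictable finite-variation local martingale null at $0$ vanishes) identifies $N=M_1-M_2$ and $A=B_1-B_2$. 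Thus $N$ is a uniformly integrable martingale and $A$ has integrable total variation, and the asserted almost sure and $L^1$ convergence of $N_t$, $A_t$, and $X_t$ follows from martingale convergence together with the domination of $|A_t|$ by its integrable total variation. I expect part (2) to be the real work, precisely because of the jump bookkeeping and the need to extract continuity of the finite-variation part from the one-sided-jump hypothesis.
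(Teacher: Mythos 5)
Parts (1), (2) and (4) of your proposal follow essentially the same route as the paper: Tanaka--Meyer for (1); for (2) the key point that the jumps of $X$ coincide with those of $N$ (since $A$ is continuous), are nonnegative, hence occur at totally inaccessible times, so that the compensator of the upward-crossing jump sum is continuous and carried by $\crl{X^+=0}$; and for (4) Doob--Meyer applied to $X^{\pm}$ together with uniqueness of the predictable finite-variation part. One detail you elide in (2): before speaking of ``the increasing process compensating the crossing jumps'' you must check that this jump sum is locally integrable (the paper does this by producing stopping times $T_n$ with $\E{(X_{T_n})^+}<\infty$ and $\E{\int_0^{T_n}1_{\crl{X_{u-}>0}}dN_u}=0$, whence $\E{Y_{T_n}}\le\E{(X_{T_n})^+}<\infty$); without that the compensator need not exist.

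The genuine gap is in part (3), in your argument that $A$ is nondecreasing. You rule out a decrease of $A$ only on \emph{intervals} where $X\equiv 0$, but $dA$ is carried by the set $\crl{X=0}$, which in the basic examples (e.g.\ $X=|B|$ for a Brownian motion $B$, where $A$ is the local time at $0$) has empty interior, so your interval argument covers none of the mass of $dA$ and says nothing about a possible negative part of $dA$ carried by the nowhere dense portion of $\crl{X=0}$. Nor can the ``reflection constraints'' $X\ge 0$, $A_0=0$, $dA$ carried by $\crl{X=0}$ close this by themselves: for the deterministic non-martingale $n_t=t$, the pair $a_t=-t$, $x_t\equiv 0$ satisfies all of them with $a$ strictly decreasing, so the martingale property of $N$ must enter globally, not just on intervals of the zero set. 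The way the paper extracts monotonicity is the one you should use: by part (1), $X=X^+$ is a local submartingale; since $A$ is continuous, hence predictable, uniqueness of the predictable finite-variation part of a special semimartingale identifies $A$ with the nondecreasing process in the canonical decomposition of that local submartingale. Once $A$ is known to be nondecreasing, your concluding step --- $A_t\ge\sup_{u\le t}(-N_u)\vee 0$ plus flatness of $A$ off $\crl{X=0}$ forcing equality --- is exactly the paper's and is fine.
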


\begin{proof}
(1) Since $(A_t)$ is continuous, one has
$\int_0^t 1_{\crl{X_{u-} > 0}} dA_u = \int_0^t 1_{\crl{X_u > 0}} dA_u = 0$.
So Tanaka's formula yields
\be \label{repX+}
X^+_t = X^+_0 + \int_0^t 1_{\crl{X_{u-} > 0}} dX_u + V_t
= X^+_0 + \int_0^t 1_{\crl{X_{u-} > 0}} dN_u + V_t
\ee
for the increasing finite variation process
$$
V_t = \sum_{0 < u \le t} 1_{\crl{X_{u-} \le 0}} X^+_u
+ \sum_{0 < u \le t} 1_{\crl{X_{u-} > 0}} X^-_u + \frac{1}{2} l_t
$$
and $(l_t)$ the local time of $(X_t)$ at $0$ (see, for instance, Protter \cite{Pr}).
This shows that $(X^+_t)$ is a local submartingale. The same is true for
$(X^-_t)$ because $(-X_t)$ is also of class $\cs$.

(2) If $(X_t)$ has no negative jumps, equation \eqref{repX+} reduces to
\be \label{red}
X^+_t = X^+_0 + \int_0^t 1_{\crl{X_{u-} > 0}} dN_u
+ \sum_{0 < u \le t} 1_{\crl{X_{u-} \le 0}} X^+_u + \frac{1}{2} l_t.
\ee
$(\int_0^t 1_{\crl{X_{u-} > 0}} dN_u)$ is a local martingale and the local time $(l_t)$
is continuous and has the property $\int_0^t 1_{\crl{X_u \neq 0}} dl_u = 0$
for all $t \ge 0$. It remains to show that the process
$Y_t = \sum_{0 < u \le t} 1_{\crl{X_{u-} \le 0}} X^+_u$
can be decomposed into the sum of a local martingale
and an adapted continuous increasing process $(C_t)$ with $C_0 = 0$ and
\be \label{Csupp}
\int_0^t 1_{\crl{X^+_u \neq 0}} dC_u = 0 \quad \mbox{for all } t \ge 0.
\ee
Since $(N_t)$ and $(\int_0^t 1_{\crl{X_{u-} > 0}} dN_u)$ are local martingales
and $(A_t)$ is continuous, there exists a sequence of stopping times
$(T_n)_{n \in \mathbb{N}}$ increasing to $\infty$ such that
$$
\E{(X_{T_n})^+} = \E{(N_{T_n} + A_{T_n})^+} < \infty \quad \mbox{and} \quad
\E{\int_0^{T_n} 1_{\crl{X_{u-} > 0}} dN_u} = 0
$$
for all $n \in \mathbb{N}$.
So it follows from \eqref{red} that
$\E{Y_{T_n}} \le \E{(X_{T_n})^+} < \infty$ for all $n \in \mathbb{N}$. Hence, by
Theorem VI.80 of Dellacherie and Meyer \cite{DMB},
there exists a right-continuous increasing predictable process $(C_t)$
starting at $0$ such that $Y_t - C_t$ is a local martingale.
Since $(A_t)$ is continuous, the jumps of $(X_t)$ coincide with those of $(N_t)$.
Due to the local martingale property of $(N_t)$ and the fact that the jumps are positive,
they have to occur at totally inaccessible stopping times. From
Theorem VI.76 of Dellacherie and Meyer \cite{DMB}, one obtains
$\E{\Delta C_T} = \E{\Delta Y_T} = 0$ for every predictable stopping time
$T < \infty$. This shows that $(C_t)$ is continuous. Moreover, there exists
a sequence of stopping times $(R_n)_{n \in \mathbb{N}}$ increasing to $\infty$
such that
$$
\E{\int_0^{t \wedge R_n} 1_{\crl{X^+_{u-} \neq 0}} dC_u} =
\E{\int_0^{t \wedge R_n} 1_{\crl{X^+_{u-} \neq 0}} dY_u} =
\E{\sum_{0 < u \le t \wedge R_n} 1_{\crl{X^+_{u-} \neq 0}} 1_{\crl{X_{u-} \le 0}} X^+_u} = 0
$$
for all $n \in \mathbb{N}$. By monotone convergence one obtains
$$
\E{\int_0^t 1_{\crl{X^+_{u-} \neq 0}} dC_u} =
\E{\int_0^t 1_{\crl{X^+_{u-} \neq 0}} dY_u} =
\E{\sum_{0 < u \le t} 1_{\crl{X^+_{u-} \neq 0}} 1_{\crl{X_{u-} \le 0}} X^+_u} = 0.
$$
This shows \eqref{Csupp} and proves that $(X^+_t)$ is of class $\cs$.
That $(X^-_t)$ is of class $\cs$ if $(X_t)$ has no positive jumps follows
from the same arguments applied to $(-X_t)$.

(3) If $(X_t)$ is non-negative, it follows from (1) that it is a local
submartingale. Hence, $A_t \ge A_u \ge -N_u \vee 0$ for all $t \ge u$, and therefore,
$A_t \ge \sup_{u \le t}(-N_u) \vee 0$. Now assume
\be \label{p>0}
\p[A_t > \sup_{u \le t}(-N_u) \vee 0] > 0
\ee
and introduce the random time
$T = \sup \crl{s \le t : A_s = \sup_{u \le s}(-N_u) \vee 0}$.
Since $(A_s)$ is continuous and $\sup_{u \le s}(-N_u) \vee 0$
increasing, one has $A_T = \sup_{u \le T}(-N_u) \vee 0$.
Moreover, since $X_u > 0$ on the stochastic interval
$\crl{(u,\omega) : T(\omega) < u \le t}$, it follows from
$\int_0^t 1_{\crl{X_u \neq 0}} dA_u = 0$ that $A_t = A_T$, a contradiction
to \eqref{p>0}. Hence, $A_t = \sup_{u \le t}(-N_u) \vee 0$.

(4) If $(X_t)$ is of class $\csd$, then $(X^+_t)$ and $(X^-_t)$ are
submartingales of class $\cd$. Therefore both have a Doob--Meyer
decomposition into the sum of a uniformly integrable martingale and
a predictable increasing process of integrable total variation:
$$
X^+_t = N^1_t + V^1_t \, , \quad X^-_t = N^2_t + V^2_t.
$$
Since the predictable finite variation part of a special semimartingale
is unique, one has $N_t = N^1_t - N^2_t$ and $A_t = V^1_t - V^2_t$.
So $(N_t)$ is a uniformly integrable martingale and $(A_t)$ of
integrable total variation. It follows that there exist integrable
random variables $X_{\infty}, N_{\infty}, A_{\infty}$ such that
$X_t \to X_{\infty}$, $N_t \to N_{\infty}$, $A_t \to A_{\infty}$
almost surely and in $L^1$.
\end{proof}

The next lemma shows that the product of processes of class $\cs$
with vanishing quadratic covariations is again of class $\cs$.

\begin{lemma} \label{lemmaprod}
Let $(X^1_t), \dots, (X^n_t)$ be processes of class $\cs$
such that $\edg{X^i,X^j}_t = 0$ for $i \neq j$.
Then $\prod_{i=1}^n X^i_t$ is again of class $\cs$.
\end{lemma}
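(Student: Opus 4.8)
The plan is to argue by induction on $n$, with all the substance concentrated in the case $n=2$; the inductive step then only has to check that the covariation hypothesis survives when the first $n-1$ factors are grouped together. The case $n=1$ is trivial, and once the case $n=2$ is established I would set $Y = \prod_{i=1}^{n-1} X^i$, which is of class $\cs$ by the induction hypothesis, and apply the two-factor result to $Y$ and $X^n$.

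For the core case, write $X^1 = N^1 + A^1$ and $X^2 = N^2 + A^2$ as in the definition. Since $A^1$ and $A^2$ are continuous and of finite variation, they contribute nothing to quadratic (co)variations, so $\edg{X^1,X^2}=0$ is the same as $\edg{N^1,N^2}=0$, and integration by parts gives
\[
X^1_t X^2_t = X^1_0 X^2_0 + \int_0^t X^1_{u-}\, dN^2_u + \int_0^t X^2_{u-}\, dN^1_u + \int_0^t X^1_{u-}\, dA^2_u + \int_0^t X^2_{u-}\, dA^1_u,
\]
the covariation term having dropped out. The first three terms form a local martingale, the integrands $X^i_{u-}$ being left-continuous, adapted and hence locally bounded predictable. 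Because $A^1,A^2$ are continuous, their total variation measures have no atoms and so do not charge the countable set of jump times of the $X^i$; thus in the last two integrals $X^i_{u-}$ may be replaced by $X^i_u$, and $A_t := \int_0^t X^1_u\, dA^2_u + \int_0^t X^2_u\, dA^1_u$ is continuous, of finite variation, and starts at $0$.

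It remains to verify condition (3) for $A$. First I would upgrade condition (3) of each factor from the signed integral to the total variation measure: since $\int_0^t 1_{\crl{X^i_u \neq 0}}\, dA^i_u = 0$ for every $t$, the signed measure $1_{\crl{X^i \neq 0}}\, dA^i$ vanishes identically, and writing $dA^i = \varepsilon^i\, d|A^i|$ with $|\varepsilon^i|=1$ one gets $1_{\crl{X^i \neq 0}}\, d|A^i| = 0$, so that $d|A^i|$ is carried by $\crl{X^i = 0}$. On $\crl{X^1_u X^2_u \neq 0} = \crl{X^1_u \neq 0}\cap\crl{X^2_u \neq 0}$ the integral of $dA^2$ against $1_{\crl{X^2 \neq 0}}$ and of $dA^1$ against $1_{\crl{X^1 \neq 0}}$ then both vanish, whence $\int_0^t 1_{\crl{X^1_u X^2_u \neq 0}}\, dA_u = 0$. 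This completes the case $n=2$.

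For the induction step the only point to check is $\edg{Y,X^n}=0$, which as above equals $\edg{Y,N^n}$. Its continuous part vanishes because $\edg{N^{i,c},N^{n,c}}=0$ for $i<n$. For the jump part I would use that $\edg{N^i,N^n}=0$ forces $\Delta N^i_s \Delta N^n_s = 0$ for every $s$ and every $i<n$ (a c\`adl\`ag process that is identically $0$ has no jumps); expanding $\Delta Y_s = \prod_{i<n}(X^i_{s-} + \Delta N^i_s) - \prod_{i<n} X^i_{s-}$, each summand carries a factor $\Delta N^i_s$ with $i<n$, so $\Delta Y_s \Delta N^n_s = 0$ and the jump part of $\edg{Y,N^n}$ vanishes too. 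I expect this to be the main obstacle: controlling the jumps of the product and recognising that pairwise orthogonality of the martingale parts rules out simultaneous jumps; the earlier upgrade of condition (3) to the total variation measure is the other point that needs care.
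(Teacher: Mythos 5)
Your proof is correct and follows essentially the same route as the paper: integration by parts with the covariation term vanishing, identification of the local martingale and continuous finite variation parts, verification that the latter is carried by $\crl{X^1 X^2 = 0}$, and induction on $n$. You are in fact more careful than the paper on two points it leaves implicit --- upgrading condition (3) to the total variation measure $d|A^i|$, and actually proving $\edg{Y,X^n}=0$ in the inductive step by ruling out simultaneous jumps.
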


\begin{proof}
Since $[X^1,X^2]_t = 0$, integration by parts yields
$$
X^1_t X^2_t = X^1_0 X^2_0 + \int_0^t X^1_{u-} dN^2_u
+ \int_0^t X^2_{u-} dN^1_u + \int_0^t X^1_u dA^2_u
+ \int_0^t X^2_u dA^1_u.
$$
$\int_0^t X^1_{u-} dN^2_u + \int_0^t X^2_{u-} dN^1_u$ is a local martingale
and $\int_0^t X^1_u dA^2_u + \int_0^t X^2_u dA^1_u$ a continuous finite
variation process starting at $0$ which only moves when $X^1_t = 0$ or
$X^2_t = 0$. Hence, $X^1_t X^2_t$ is of class $\cs$. If $n \ge 3$, then
$\edg{X^1 X^2, X^3}_t = 0$, and the lemma follows by induction.
\end{proof}

In the following lemma and the subsequent corollary we
extend results of Nikeghbali \cite{NSub} to our framework that will
be needed later in the paper.

\begin{lemma} \label{lemmatrans}
Let $(X_t)$ be a process of class $\cs$ and $f : \mathbb{R} \to \mathbb{R}$
a locally bounded Borel function. Denote $F(x) = \int_0^x f(y) dy$.
Then the following hold:\\[2mm]
{\rm (1)} The process $f(A_t) X_t$ is again of class $\cs$ with decomposition
\be \label{decomp}
f(A_t)X_t = f(0) X_0 + \int_0^t f(A_u) dN_u + F(A_t).
\ee
{\rm (2)}
If $(f(A_t) X_t)$ is of class $\cd$, then
$M_t = f(A_t)X_t - F(A_t)$ is a uniformly integrable martingale,
and therefore,
\be \label{fAoptst}
f(A_T) X_T - F(A_T) = \E{M_{\infty} \mid {\cal F}_T} \quad
\mbox{for every stopping time } T.
\ee
\end{lemma}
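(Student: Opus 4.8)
The plan is to establish the decomposition \eqref{decomp} first for smooth $f$ by stochastic integration by parts, and then to bootstrap to locally bounded Borel $f$ through an approximation argument; part (2) will then follow quickly from part (1) together with Lemma \ref{lemmasub}.

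For (1), I would first assume $f \in C^1$, so that the chain rule gives $f(A_t) = f(0) + \int_0^t f'(A_u)\,dA_u$, exhibiting $(f(A_t))$ as a continuous finite variation process. Integration by parts then yields
\be
f(A_t) X_t = f(0) X_0 + \int_0^t f(A_u)\,dX_u + \int_0^t X_{u-}\,d(f(A_u)),
\ee
since $\edg{f(A),X}_t = 0$ because $(f(A_t))$ is continuous of finite variation. The last integral vanishes: the measure $d(f(A_u)) = f'(A_u)\,dA_u$ is carried by the support of $dA$, which by property (3) of class $\cs$ lies in $\crl{X_u = 0}$, and the only points of that set where $X_{u-} \neq X_u = 0$ are the (countably many) jump times of $(X_t)$, which carry no mass because $(A_t)$ is continuous, hence $dA$ is atomless. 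Splitting $dX = dN + dA$ in the remaining integral and using the chain rule $F(A_t) = \int_0^t f(A_u)\,dA_u$ (recall $F(0)=0$) produces exactly \eqref{decomp}. To see that the right-hand side exhibits $(f(A_t)X_t)$ as a process of class $\cs$, note that $(\int_0^t f(A_u)\,dN_u)$ is a local martingale since $(f(A_u))$ is predictable and locally bounded (as $(A_t)$ is continuous), that $(F(A_t))$ is continuous of finite variation with $F(A_0)=0$, and that $dF(A_u) = f(A_u)\,dA_u$ is supported on $\crl{X_u = 0} \subseteq \crl{f(A_u)X_u = 0}$, which is property (3).

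The next step is to remove the smoothness assumption. I would let $\mathcal{H}$ be the set of bounded Borel functions for which \eqref{decomp} holds and check that it is a vector space (both sides are linear in $f$), that it contains the bounded $C^1$ functions by the previous step, and that it is stable under uniform limits and under bounded monotone limits. The only nonroutine point is the convergence of the stochastic integral term, for which I would invoke the dominated convergence theorem for stochastic integrals: if $f_n \to f$ boundedly and pointwise, then $f_n(A_u) \to f(A_u)$ pointwise while staying uniformly bounded, so $\int_0^t f_n(A_u)\,dN_u \to \int_0^t f(A_u)\,dN_u$ in probability, uniformly on compacts, while $f_n(A_t)X_t$ and $F_n(A_t)$ converge by ordinary dominated convergence. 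Since the bounded $C^1$ functions form a multiplicative class generating the Borel $\sigma$-algebra, the functional monotone class theorem yields \eqref{decomp} for all bounded Borel $f$. To pass to locally bounded $f$, I would truncate, applying the bounded case to $f_K := f\,1_{\crl{|x| \le K}}$ and letting $K \to \infty$: for fixed $t$, continuity of $(A_t)$ ensures $\sup_{u \le t}|A_u| \le K$ with probability tending to $1$, and on that event $f_K(A_u)$, $F_K(A_u)$ and the corresponding stochastic integral coincide with their untruncated versions for $u \le t$.

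Finally, for (2), part (1) shows that $(f(A_t)X_t)$ is of class $\cs$; combined with the hypothesis that it is of class $\cd$, it is of class $\csd$, with local martingale part $M_t = f(0)X_0 + \int_0^t f(A_u)\,dN_u$ and continuous finite variation part $F(A_t)$. Lemma \ref{lemmasub}(4) then identifies $(M_t)$ as a uniformly integrable martingale, so $M_t = \E{M_\infty \mid {\cal F}_t}$, and the optional sampling theorem gives $M_T = \E{M_\infty \mid {\cal F}_T}$ for every stopping time $T$, which is precisely \eqref{fAoptst}. I expect the main obstacle to be the approximation step in (1), namely controlling the stochastic integral under the weak (merely locally bounded Borel) hypothesis on $f$; everything else is bookkeeping once the smooth case and the support property of $dA$ are in hand.
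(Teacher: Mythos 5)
Your proposal is correct and follows essentially the same route as the paper: establish \eqref{decomp} for $C^1$ functions via integration by parts and the support property of $dA$, extend to bounded Borel $f$ by a monotone class argument, then to locally bounded $f$ by localization/truncation, and deduce (2) from Lemma \ref{lemmasub}(4) and optional stopping. The only difference is that you spell out the approximation steps (stochastic dominated convergence, truncation of $f$) that the paper leaves implicit.
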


\begin{proof}
(1) It can easily be checked that $f(A_t) X_t$ is c\`adl\`ag. To show that
it is of class $\cs$ with decomposition \eqref{decomp} we first assume
that $f$ is $C^1$. Then
$$
f(A_t) X_t = f(0) X_0 + \int_0^t f(A_u) (dN_u + dA_u)
+ \int_0^t X_u f'(A_u) dA_u.
$$
Since $(X_t)$ is of class $\cs$, the last integral vanishes. So
$$
f(A_t) X_t = f(0) X_0 + \int_0^t f(A_u) dN_u + F(A_t).
$$
$f(0) X_0 + \int_0^t f(A_u) dN_u$ is a local martingale and $F(A_t)$
a continuous adapted finite variation process starting at $0$. Moreover,
$$
\int_0^t 1_{\crl{X_u \neq 0}} dF(A_u) = \int_0^t 1_{\crl{X_u \neq 0}} f(A_u) dA_u = 0,
$$
and therefore also,
$$
\int_0^t 1_{\crl{f(A_u) X_u \neq 0}} dF(A_u) = 0,
$$
which shows that $f(A_t)X_t$ is of class $\cs$. The case where
$f$ is a bounded Borel function now follows from a monotone class argument.
From there it can be extended to locally bounded Borel functions by localization
with a sequence of stopping times.

(2)
If $f(A_t)X_t$ is of class $\csd$, it follows from Lemma \ref{lemmasub} that its
local martingale part $M_t = f(A_t)X_t - F(A_t)$
is a uniformly integrable martingale. Formula \eqref{fAoptst}
is then a consequence of Doob's optional stopping theorem.
\end{proof}

Note that if $(M_t)$ is a local martingale starting at $m \in \mathbb{R}$
such that $(\overline{M}_t)$ is continuous, then $X_t = \overline{M}_t - M_t$
is of class $\cs$ with decomposition $X_t = (m - M_t) + (\overline{M}_t - m)$.
So one obtains from Lemma \ref{lemmatrans} that for every
locally bounded Borel function $f : \mathbb{R} \to \mathbb{R}$
and $F(x) = \int_0^x f(y) dy$, the process
$$
F(A_t) - f(A_t)X_t = F(\overline{M}_t-m) - f(\overline{M}_t-m)(\overline{M}_t - M_t)
$$
is again a local martingale. This transformation was used by
Az\'ema and Yor \cite{AY} in their solution of the Skorokhod embedding problem.
In Carraro et al. \cite{CEO} it is studied for max-continuous semimartingales.

One can use Lemma \ref{lemmatrans} to calculate the probability that
processes of the form $f(A_t)X_t$ stay below a given constant,
which without loss of generality, can be taken to be $1$.
This will prove useful in the study of drawdowns and relative
drawdowns in Section \ref{secDD}.

\begin{corollary} \label{corfiniteR}
Let $(X_t)$ be a non-negative process of class $\cs$ with no positive jumps
such that $A_{\infty} = \infty$, $f : \mathbb{R}_+ \to \mathbb{R}_+$ a Borel function
and $T < \infty$ a stopping time. Then
\be \label{hp1}
\begin{split}
& \p[f(A_t) X_t < 1 \mbox{ for all } t \ge T \mid {\cal F}_T]
= \p[f(A_t) X_t \le 1 \mbox{ for all } t \ge T \mid {\cal F}_T]\\
&= (1 - f(A_T)X_T)^+ \exp \brak{- \int_{A_T}^{\infty} f(x) dx}.
\end{split}
\ee
Moreover, in both of the following two cases:\\[1mm]
{\rm (1)} $K$ is an ${\cal F}_T$-measurable random variable such that
$K > A_T$ and $T_K = \inf \crl{t : A_t \ge K}$\\[1mm]
{\rm (2)} $K$ is an ${\cal F}_T$-measurable random variable such that
$K \ge A_T$ and $T_K = \inf \crl{t : A_t > K}$,\\[2mm]one has
\be \label{hp2}
\begin{split}
& \p[f(A_t) X_t < 1 \mbox{ for all } t \in [T, T_K] \mid {\cal F}_T]
= \p[f(A_t) X_t \le 1 \mbox{ for all } t \in [T, T_K] \mid {\cal F}_T]\\
&= (1 - f(A_T)X_T)^+ \exp \brak{- \int_{A_T}^K f(x) dx}.
\end{split}
\ee
\end{corollary}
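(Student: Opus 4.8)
The plan is to apply Lemma~\ref{lemmatrans} a \emph{second} time to produce an exponential local martingale, stop it between $T$ and the relevant exit time, and read off both probabilities from the optional sampling theorem. Throughout write $Y_t = f(A_t)X_t$, $F(x) = \int_0^x f(y)\,dy$ and $B_t = F(A_t)$. By Lemma~\ref{lemmatrans}(1) the process $Y$ is of class $\cs$ with local martingale part $N^Y_t = f(0)X_0 + \int_0^t f(A_u)\,dN_u$ and continuous finite variation part $B_t$. Since $f\ge 0$ and $X\ge 0$ we have $Y\ge 0$, and since $A$ is continuous the jumps of $Y$ are $f(A_t)\Delta N_t = f(A_t)\Delta X_t\le 0$, so $Y$ has no positive jumps. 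Moreover, by Lemma~\ref{lemmasub}(3) the process $A$ is increasing, so $B$ is continuous increasing with $B_0=0$ and, because $A_\infty=\infty$, $B_t\uparrow B_\infty = F(\infty)=\int_0^\infty f(x)\,dx$, a deterministic constant in $[0,\infty]$. We may assume $f$ is locally bounded, the general Borel case following by a routine approximation as in the proof of Lemma~\ref{lemmatrans}; in particular $F(K)<\infty$.

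The key step is to apply Lemma~\ref{lemmatrans} once more, now to the class-$\cs$ process $Y$ with the locally bounded function $g(x)=-e^x$, so that $G(x)=\int_0^x g(y)\,dy = 1-e^x$. This gives that $g(B_t)Y_t - G(B_t) = e^{B_t}(1-Y_t)-1$ is a local martingale, hence so is
$$Z_t := e^{B_t}(1-Y_t) = e^{F(A_t)}\brak{1-f(A_t)X_t}.$$
Equivalently, integration by parts yields $dZ_t = -e^{B_t}f(A_t)\,dN_t - e^{B_t}Y_t\,dB_t$, where the last term vanishes because $dB_t$ is carried by $\crl{Y_t=0}$.

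Next record two pathwise facts. First, $Y_{T_K}=0$: since $A_{T_K}=K$ and $A$ increases only on $\crl{X=0}$, the zeros of $X$ accumulate at $T_K$ (from the left in case (1), from the right in case (2)), so $X_{T_K-}=0$ or $X_{T_K}=0$; as $X$ is non-negative with no positive jumps, $X_{T_K}=0$ in both cases. Second, writing $\sigma = \inf\crl{t\ge T : Y_t\ge 1}$ or $\sigma = \inf\crl{t\ge T : Y_t>1}$, the absence of positive jumps forces $Y_\sigma=1$ on $\crl{\sigma<\infty}$, since the left limit at $\sigma$ is $\le 1$ while right-continuity gives $Y_\sigma\ge 1$. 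To prove \eqref{hp2}, work on the ${\cal F}_T$-set $\crl{Y_T\le 1}$ and stop $Z$ at $\rho=\sigma\wedge T_K$. On $[T,\rho]$ one has $0\le Y\le 1$ and $1\le e^{B_t}\le e^{F(K)}$, so $0\le Z\le e^{F(K)}$; thus $Z^\rho$ is a bounded, hence uniformly integrable, martingale and optional sampling gives $\E{Z_\rho\mid {\cal F}_T} = Z_T = e^{B_T}(1-Y_T)$. By the two facts, $Z_\rho=0$ on $\crl{\sigma\le T_K}$ (there $\rho=\sigma$ and $Y_\sigma=1$) and $Z_\rho=e^{F(K)}$ on $\crl{\sigma>T_K}$ (there $\rho=T_K$ and $Y_{T_K}=0$), so $e^{F(K)}\,\pr{\sigma>T_K\mid {\cal F}_T} = e^{B_T}(1-Y_T)$, i.e.
$$\pr{\sigma>T_K\mid {\cal F}_T} = (1-f(A_T)X_T)^+ \exp\brak{-\int_{A_T}^K f(x)\,dx}.$$
Applying this with the two choices of $\sigma$ gives the strict and the non-strict inequality in \eqref{hp2}, while on $\crl{Y_T>1}$ both sides vanish. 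Finally \eqref{hp1} follows by letting $K\to\infty$ along constants: since $A_\infty=\infty$ we have $T_K\uparrow\infty$, so $\crl{\sigma>T_K}\downarrow\crl{\sigma=\infty}$, and conditional dominated convergence together with $F(K)\uparrow F(\infty)$ turns the last display into \eqref{hp1}.

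The step I expect to be the crux is the identity $Y_{T_K}=0$: it is exactly what collapses the boundary term $Z_{T_K}=e^{F(K)}(1-Y_{T_K})$ to the constant $e^{F(K)}$, and it hinges on combining the support property (3) of class $\cs$, the continuity of $A$, and the absence of positive jumps. The equality of the strict and non-strict probabilities is the second delicate point, and it rests on the same no-positive-jumps property through $Y_\sigma=1$.
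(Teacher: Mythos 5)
Your argument is correct in substance and runs on the same engine as the paper's: the exponential local martingale $Z_t=e^{F(A_t)}(1-f(A_t)X_t)$ obtained from a second application of Lemma \ref{lemmatrans}, optional stopping between $T$ and an exit time, and the absence of positive jumps to force $Y_\sigma=1$. What is genuinely different is the order of the two formulas and the way the terminal value of $Z$ is identified. The paper proves \eqref{hp1} first, for bounded $f$ with $F(\infty)<\infty$: it stops only at $R=\inf\{t\ge T: Y_t\ge 1\}$ and, on $\{R=\infty\}$, identifies $\lim_{t\to\infty}Z_t=e^{F(\infty)}$ by evaluating along a sequence of zeros of $Y$ tending to infinity (these exist because $A_\infty=\infty$ forces $L=\infty$); it then deduces \eqref{hp2} by replacing $f$ with $f1_{[0,K]}$ or $f1_{[0,K)}$, invoking $X_{T_K}=0$ only at the very end to pass from $[T,T_K)$ to $[T,T_K]$. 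You instead prove \eqref{hp2} directly by stopping at $\rho=\sigma\wedge T_K$ and using $Y_{T_K}=0$ --- which you correctly single out as the crux, and your derivation of it from the support property of $dA$, the continuity of $A$ and the absence of positive jumps is exactly right --- and then recover \eqref{hp1} by letting $K\to\infty$ along constants. Your route buys a cleaner terminal identification (one stopping time with a pinned value, instead of a limit along a sequence of zeros) at the price of an extra limit in $K$ afterwards; the two are of essentially equal weight.

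Two points need tightening. First, in \eqref{hp2} the variable $K$ is only ${\cal F}_T$-measurable, so $e^{F(K)}$ is not a constant and ``$Z^\rho$ is a bounded martingale'' is not literally true; you should first work on $\{K\le n\}\in{\cal F}_T$, where the increment $1_{\{t\ge T\}}\left(Z_{t\wedge\rho}-Z_T\right)$ is genuinely bounded, and then let $n\to\infty$ (the paper is comparably terse on this point, as its reduction ``set $f=0$ on $(K,\infty)$'' also involves a random function). Second, the reduction to locally bounded $f$ is not the monotone class argument of Lemma \ref{lemmatrans}: that argument establishes a semimartingale decomposition, whereas here one must pass to the limit in a probability identity. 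The paper does this by taking $f^n=f\wedge n1_{[0,n]}$ and observing that the events $\{f^n(A_t)X_t<1 \mbox{ for all } t\ge T\}$ decrease exactly to $\{f(A_t)X_t<1 \mbox{ for all } t\ge T\}$, because $f^n\uparrow f$ with $f^n(x)=f(x)$ for all large $n$ at each fixed $x$; you should make this monotone-events argument explicit rather than gesture at Lemma \ref{lemmatrans}. Neither point affects the structure of your proof.
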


\begin{proof}
Let us first assume that
$f$ is bounded and $F(\infty) = \int_0^{\infty} f(y) dy < \infty$.
Then one obtains from part (1) of Lemma \ref{lemmatrans} that
$Y_t = f(A_t) X_t$ is a non-negative process of class $\cs$
with no positive jumps. For a given stopping time $T < \infty$,
denote $R = \inf \crl{t \ge T : Y_t \ge 1}$.
By \eqref{decomp}, $Y_t$ decomposes as
$$
Y_t = f(0)X_0 + \int_0^t f(A_u)dN_u + F(A_t)
$$
and
$$
e^{F(A_t)} Y_t = f(0)X_0 + \int_0^t e^{F(A_u)} f(A_u) dN_u + e^{F(A_t)}.
$$
is again of class $\cs$. In particular, $e^{F(A_t)}(1 - Y_t)$
is a local martingale and
$$
M_t = 1_{\crl{t \ge T}} \brak{e^{F(A_{R \wedge t})}(1-Y_{t \wedge R})
- e^{F(A_T)}(1-Y_T)}
$$
a bounded martingale such that $M_0 = 0$ and $M_t \to M_{\infty}$
almost surely and in $L^1$ for an integrable random variable $M_{\infty}$.
Note that $M_{\infty} = 0$ on $\crl{T = R}$ and $M_{\infty} = - e^{F(A_T)}(1-Y_T)$ on
$\crl{T < R < \infty}$. Moreover, since $A_{\infty} 1_{\crl{L < \infty}}
= A_L 1_{\crl{L < \infty}}$ is real-valued,
it follows from $A_{\infty} = \infty$ that $L = \infty$. Hence, there
exists a sequence $(T_n)_{n \in \mathbb{N}}$ of stopping times that increase to $\infty$
almost surely such that $Y_{T_n} = 0$ for all $n \in \mathbb{N}$, and one
obtains
$$
M_{\infty} = \lim_{n \to \infty}
e^{F(A_{T_n})}(1-Y_{T_n}) - e^{F(A_T)}(1-Y_T)
= e^{F(\infty)} - e^{F(A_T)}(1-Y_T)
$$
almost everywhere on $\crl{R = \infty}$.
So $\E{M_{\infty} \mid {\cal F}_T} = 0$ yields
$$
e^{F(A_T)} (1-Y_T)^+ = \p[R = \infty \mid {\cal F}_T] e^{F(\infty)},
$$
which is equivalent to
\be \label{e1}
\p[f(A_t) X_t < 1 \mbox{ for all } t \ge T \mid {\cal F}_T]
= (1 - f(A_T)X_T)^+ \exp \brak{- \int_{A_T}^{\infty} f(x) dx}.
\ee
The equality
\be \label{e2}
\p[f(A_t) X_t \le 1 \mbox{ for all } t \ge T \mid {\cal F}_T]
= (1 - f(A_T)X_T)^+ \exp \brak{- \int_{A_T}^{\infty} f(x) dx}
\ee
follows from the same argument applied to the
stopping time $\tilde{R} = \inf \crl{t \ge T : Y_t > 1}$.
That \eqref{e1} and \eqref{e2} still hold for
general Borel functions $f : \mathbb{R}_+ \to \mathbb{R}_+$
can be seen by approximating $f$ with $f^n = f \wedge n 1_{[0,n]}$, $n \in \mathbb{N}$.
Note that the functions $f^n$ increase to $f$ and for every $x \ge 0$ there
exists an $n_0 \in \mathbb{N}$ such that $f^n(x) = f(x)$ for all $n \ge n_0$.
Therefore, one has
$$
\bigcap_{n \in \mathbb{N}} \crl{f^n(A_t) X_t < 1 \mbox{ for all } t \ge T}
= \crl{f(A_t) X_t < 1 \mbox{ for all } t \ge T}
$$
as well as
$$
\bigcap_{n \in \mathbb{N}} \crl{f^n(A_t) X_t \le 1 \mbox{ for all } t \ge T}
= \crl{f(A_t) X_t \le 1 \mbox{ for all } t \ge T}.
$$
In case (2) one obtains \eqref{hp2} from \eqref{hp1} simply
by setting $f$ equal to $0$ on $(K, \infty)$. In case (1),
setting $f$ equal to $0$ on $[K, \infty)$ gives
\beas
&& \p[f(A_t) X_t < 1 \mbox{ for all } t \in [T, T_K) \mid {\cal F}_T]
= \p[f(A_t) X_t \le 1 \mbox{ for all } t \in [T, T_K) \mid {\cal F}_T]\\
&& = (1 - f(A_T)X_T)^+ \exp \brak{- \int_{A_T}^K f(x) dx}.
\eeas
But this is equivalent to \eqref{hp2} since $X_{T_K} = 0$.
\end{proof}

\setcounter{equation}{0}
\section{Representation results}
\label{secRep}

\subsection{Representations in terms of last passage times}
\label{subsecRepLP}

The results in this subsection are
inspired by a representation formula for relative martingales by
Az\'ema and Yor \cite{AYzeros} and the recent formula \eqref{MRYform}
of Madan et al. \cite{MRY}. In fact, in the special case
$f \equiv 1$, part (1) of Theorem \ref{thmrepL} below
follows from the proof of Proposition 2.2.a) in Az\'ema and Yor \cite{AYzeros}.
In part (2) of Theorem \ref{thmrepL} and
Corollary \ref{correpL} we relax the integrability conditions on $(X_t)$.
This leads to formulas involving
conditional expectations of random variables which
are conditionally integrable but not necessarily integrable.
To cover this case, we define the conditional expectation of any
random variable $X$ with respect to a sub-$\sigma$-algebra ${\cal G}$
of ${\cal F}$ by
\be \label{extE}
\E{X \mid {\cal G}}
= \sup_{m \in \mathbb{Z}} \inf_{n \in \mathbb{Z}}
\E{m \wedge (n \vee X) \mid {\cal G}}.
\ee
Then
$$
\E{X Y \mid {\cal G}} = X \E{Y \mid {\cal G}}
$$
for all ${\cal G}$-measurable random variables $X$ and
integrable random variables $Y$.

\begin{theorem} \label{thmrepL}
Let $(X_t)$ be a process of class $\cs$ and $f : \mathbb{R} \to \mathbb{R}$
a Borel function. Then the following hold:
\begin{itemize}
\item[{\rm (1)}] If $(X_t)$ is of class $\cd$, then there exist integrable
random variables $X_{\infty}$, $N_{\infty}$, $A_{\infty}$ such that
$X_t \to X_{\infty}$, $N_t \to N_{\infty}$, $A_t \to A_{\infty}$
almost surely as well as in $L^1$ and
\be \label{repL}
f(A_T) X_T = \E{f(A_{\infty}) X_{\infty} 1_{\crl{L \le T}} \mid {\cal F}_T} \quad
\mbox{for every stopping time } T.
\ee
\item[{\rm (2)}]
If $q : \mathbb{R} \to \mathbb{R} \setminus \crl{0}$ is a
Borel function such that $q(A_t) X_t$ is of class $\cd$, then there exist
random variables $X_{\infty}$, $N_{\infty}$, $A_{\infty}$ such that
$X_t \to X_{\infty}$, $N_t \to N_{\infty}$, $A_t \to A_{\infty}$
almost everywhere on $\crl{L < \infty}$ and
\be \label{repTfinite}
f(A_T) X_T = \E{f(A_{\infty}) X_{\infty} 1_{\crl{L \le T}} \mid {\cal F}_T} \quad
\mbox{for all stopping times } T < \infty.
\ee
\end{itemize}
In particular, in both cases one has
\be \label{repTfiniteX}
X_T = \E{X_{\infty} 1_{\crl{L \le T}} \mid {\cal F}_T} \quad
\mbox{for all stopping times } T < \infty.
\ee
\end{theorem}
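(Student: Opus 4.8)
The plan is to reduce the whole statement to the single identity \eqref{repTfiniteX} in the class $\csd$ case, and to prove that identity by stopping the martingale part at the first zero after $T$.

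First I would record the convergence statements and isolate the one structural fact I will use everywhere: condition (3) forces the measure $dA$ to be carried by $\crl{X=0}$, so that $A$ is frozen after the last zero, i.e. on $\crl{L<\infty}$ one has $A_t = A_L = A_\infty$ for all $t\ge L$. In case (1), $(X_t)$ is of class $\csd$, so Lemma \ref{lemmasub}(4) already supplies integrable limits $X_\infty, N_\infty, A_\infty$ (a.s.\ and in $L^1$) and tells us $(N_t)$ is a uniformly integrable martingale. In case (2), I would apply Lemma \ref{lemmatrans}(1) to see that $Y_t := q(A_t)X_t$ is of class $\cs$, hence of class $\csd$, and invoke Lemma \ref{lemmasub}(4) for $Y$; dividing $Y_t$ by $q(A_t)$, which equals the nonzero constant $q(A_\infty)$ for $t$ large on $\crl{L<\infty}$, yields the convergence $X_t\to X_\infty$, $N_t\to N_\infty$, $A_t\to A_\infty$ on $\crl{L<\infty}$ claimed in (2).

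Next I would observe that both \eqref{repL} and \eqref{repTfinite} follow from \eqref{repTfiniteX}. Indeed, the freezing of $A$ gives $A_\infty=A_T$ on $\crl{L\le T}$, hence $f(A_\infty)X_\infty 1_{\crl{L\le T}} = f(A_T)X_\infty 1_{\crl{L\le T}}$; since $f(A_T)$ is ${\cal F}_T$-measurable and $X_\infty 1_{\crl{L\le T}}$ is integrable, the pull-out property of the extended conditional expectation \eqref{extE} gives $\E{f(A_\infty)X_\infty 1_{\crl{L\le T}}\mid{\cal F}_T} = f(A_T)\E{X_\infty 1_{\crl{L\le T}}\mid{\cal F}_T}= f(A_T)X_T$. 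The same device reduces (2) to (1): applying the identity \eqref{repTfiniteX} to the class-$\csd$ process $Y=q(A)X$, whose last zero coincides with $L$ because $q$ never vanishes, gives $q(A_T)X_T = \E{q(A_\infty)X_\infty 1_{\crl{L\le T}}\mid{\cal F}_T}$; on $\crl{L\le T}$ one has $q(A_\infty)=q(A_T)\neq0$, so dividing by $q(A_T)$ recovers \eqref{repTfiniteX} for $X$.

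It therefore remains to prove \eqref{repTfiniteX} for a process of class $\csd$, namely $X_T=\E{X_\infty 1_{\crl{L\le T}}\mid{\cal F}_T}$. Here I would introduce the first zero strictly after $T$, $d=\inf\crl{t>T:X_t=0}$, which is a stopping time with $\crl{d<\infty}=\crl{L>T}$. Right-continuity of the c\`adl\`ag process forces $X_d=0$ on $\crl{d<\infty}$, and since $A$ does not move on $(T,d)$ one gets $N_d=-A_d=-A_T$ there. Writing $X_\infty=N_\infty+A_\infty$ and splitting off $N_d$ then yields the pathwise identity $X_\infty 1_{\crl{L>T}} = (N_\infty-N_d)1_{\crl{d<\infty}} + (A_\infty-A_T)$, where I also use that $A_\infty-A_T$ is supported on $\crl{L>T}$. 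Taking $\E{\,\cdot\mid{\cal F}_T}$ and applying optional sampling of the uniformly integrable martingale $N$ at $T\le d$ annihilates the first term, leaving $\E{X_\infty 1_{\crl{L>T}}\mid{\cal F}_T}=\E{A_\infty-A_T\mid{\cal F}_T}$. Subtracting this from $\E{X_\infty\mid{\cal F}_T}=N_T+\E{A_\infty\mid{\cal F}_T}$ gives $\E{X_\infty 1_{\crl{L\le T}}\mid{\cal F}_T}=N_T+A_T=X_T$, as desired.

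The delicate points, which I expect to be the main obstacle, are exactly the measure-theoretic facts underpinning the previous paragraph: that $d$ is a stopping time with $\crl{d<\infty}=\crl{L>T}$, that $X_d=0$ on $\crl{d<\infty}$ for a general c\`adl\`ag process (so that $N_d=-A_T$), and that $dA$ is genuinely carried by $\crl{X=0}$ so that $A$ is frozen after $L$; these are precisely where the class-$\cs$ hypotheses are used. The other point requiring care is integrability in part (2): $X$ itself need not be of class $\cd$, which is why one works throughout with $Y=q(A)X$ and only claims convergence on $\crl{L<\infty}$, and why the extended conditional expectation \eqref{extE} is needed to make sense of $\E{f(A_\infty)X_\infty 1_{\crl{L\le T}}\mid{\cal F}_T}$ when $f(A_\infty)X_\infty$ fails to be integrable.
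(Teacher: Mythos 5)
Your argument is correct and follows essentially the same route as the paper: the same stopping time $d=\inf\crl{t>T:X_t=0}$, optional stopping of the uniformly integrable martingale $(N_t)$ at $d$, the freezing of $A$ after $L$ to pull $f(A_\infty)=f(A_T)$ out of the conditional expectation, and the reduction of part (2) to part (1) by multiplying $X$ by a nonvanishing function of $A$. The one slip is that you invoke Lemma \ref{lemmatrans}(1) for $q$ itself, which is only assumed Borel and nonvanishing, whereas that lemma requires local boundedness; the paper's fix is to work with the bounded nonvanishing function $h=|q|\wedge 1$, noting that $|h(A_t)X_t|\le |q(A_t)X_t|$ preserves class $\cd$, and your argument then goes through verbatim with $h$ in place of $q$.
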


\begin{proof}
(1) If $(X_t)$ is of class $\csd$, it follows from Lemma \ref{lemmasub} that
$(N_t)$ is a uniformly integrable martingale and $(A_t)$ of integrable
total variation. So there exist integrable random variables
$X_{\infty}$, $N_{\infty}$, $A_{\infty}$
such that $X_t \to X_{\infty}$, $N_t \to N_{\infty}$, $A_t \to A_{\infty}$
almost surely as well as in $L^1$, and for every stopping time $T$, the following
trick from the proof of Proposition 2.2 of Az\'ema and Yor \cite{AYzeros}
can be applied: Denote
$$
d_T = \inf \crl{t > T : X_t = 0} \quad \mbox{with the convention }
\inf \emptyset = \infty.
$$
Since $X_{\infty} 1_{\crl{L \le T}} = X_{d_T}$ and $A_T = A_{d_T}$,
it follows from Doob's optional stopping theorem that
$$
\E{X_{\infty} 1_{\crl{L \le T}} \mid {\cal F}_T}
= \E{N_{d_T} + A_{d_T} \mid {\cal F}_T} = N_T + A_T = X_T.
$$
Moreover, one has $A_{\infty} = A_T$ almost everywhere on $\crl{L \le T}$,
and therefore,
$$
\E{f(A_{\infty}) X_{\infty} 1_{\crl{L \le T}} \mid {\cal F}_T}
= \E{f(A_T) X_{\infty} 1_{\crl{L \le T}} \mid {\cal F}_T} = f(A_T) X_T.
$$
(2) If there exists a Borel function
$q : \mathbb{R} \to \mathbb{R} \setminus \crl{0}$
such that $q(A_t) X_t$ is of class $\cd$, then
$h(x) = |q(x)| \wedge 1$ is a bounded Borel function
and $Y_t = h(A_t) X_t$ is still of class $\cd$. It follows from
Lemma \ref{lemmatrans} that $(Y_t)$ is of class $\csd$. By (1),
$Y_t \to Y_{\infty}$ almost surely as well as in $L^1$ and
$$
Y_T = \E{Y_{\infty} 1_{\crl{L \le T}} \mid {\cal F}_T}
\quad \mbox{for every stopping time } T.
$$
Since $\int_0^t 1_{\crl{X_u \neq 0}} dA_u = 0$ for all
$t \ge 0$, $A_t$ converges to $A_L$ almost everywhere on $\crl{L < \infty}$.
Hence, it follows from $h \neq 0$ that $X_t \to X_{\infty} = Y_{\infty}/h(A_L)$
and $N_t \to N_{\infty} = X_{\infty} - A_L$
almost everywhere on $\crl{L < \infty}$. On $\crl{L=\infty}$, set
$X_{\infty} = N_{\infty} = A_{\infty} = 0$. If $T$ is a stopping time
satisfying $T < \infty$, then
$$
f(A_T) X_T = \frac{f(A_T)}{h(A_T)} Y_T
= \E{\frac{f(A_T)}{h(A_T)} Y_{\infty} 1_{\crl{L \le T}} \mid {\cal F}_T}
= \E{f(A_{\infty}) X_{\infty} 1_{\crl{L \le T}} \mid {\cal F}_T}.
$$
\end{proof}

\begin{corollary} \label{correpL}
Let $(X_t)$ be a process of class $\cs$ and $f :\mathbb{R} \to \mathbb{R}$
a Borel function. Assume that at least one of the following two conditions holds:
\begin{itemize}
\item[{\rm (1)}]
$(N_t)$ is a uniformly integrable martingale
\item[{\rm (2)}]
$(X^-_t)$ and $(N^+_t)$ are of class $\cd$.
\end{itemize}
Then there exist random variables $X_{\infty}$, $N_{\infty}$, $A_{\infty}$ such that
$X_t \to X_{\infty}$, $N_t \to N_{\infty}$, $A_t \to A_{\infty}$ almost everywhere
on $\crl{L < \infty}$ and
$$
f(A_T) X_T = \E{f(A_{\infty}) X_{\infty} 1_{\crl{L \le T}} \mid {\cal F}_T} \quad
\mbox{for all stopping times } T < \infty.
$$
In particular,
$$
X_T = \E{X_{\infty} 1_{\crl{L \le T}} \mid {\cal F}_T} \quad
\mbox{for all stopping times } T < \infty.
$$
\end{corollary}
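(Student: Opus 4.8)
The plan is to deduce the corollary directly from part (2) of Theorem \ref{thmrepL}. For that I only need to exhibit, under either hypothesis, one non-vanishing Borel function $q : \mathbb{R} \to \mathbb{R} \setminus \{0\}$ for which $q(A_t) X_t$ is of class $\cd$; the convergence statement and the representation formula are then inherited verbatim from Theorem \ref{thmrepL}(2), the final display being the special case $f \equiv 1$. The candidate that works in both cases is
$$
q(x) = \frac{1}{1 + |x|},
$$
which is Borel, takes values in $(0,1]$ (hence is non-vanishing and bounded by $1$), and satisfies $|x|\,q(x) \le 1$ for all $x \in \mathbb{R}$. These are precisely the three features the argument needs: non-vanishing so that Theorem \ref{thmrepL}(2) applies, boundedness to control the local-martingale contribution, and boundedness of $x \mapsto x\,q(x)$ to control the finite-variation contribution $A_t$.

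First I would treat case (1). A uniformly integrable martingale is of class $\cd$, so the family $\{|N_T| 1_{\{T < \infty\}}\}$ is uniformly integrable. Writing $X_T = N_T + A_T$ and using $q(A_T) \le 1$ together with $q(A_T)|A_T| \le 1$ gives the pointwise bound
$$
|q(A_T) X_T| \le q(A_T)|N_T| + q(A_T)|A_T| \le |N_T| + 1,
$$
and the family on the right is uniformly integrable. Hence $q(A_t) X_t$ is of class $\cd$.

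In case (2) I would instead start from the elementary inequality $X_T^+ = (N_T + A_T)^+ \le N_T^+ + A_T^+$. Since $q > 0$, this yields
$$
|q(A_T) X_T| = q(A_T)\,X_T^+ + q(A_T)\,X_T^- \le q(A_T)\,N_T^+ + q(A_T)\,A_T^+ + q(A_T)\,X_T^- \le N_T^+ + 1 + X_T^-,
$$
again using $q \le 1$ and $q(A_T)|A_T| \le 1$. By assumption $(N^+_t)$ and $(X^-_t)$ are of class $\cd$, so $\{N_T^+\}$ and $\{X_T^-\}$ are uniformly integrable, and therefore so is $\{|q(A_T) X_T| 1_{\{T < \infty\}}\}$. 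Thus $q(A_t) X_t$ is again of class $\cd$.

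In both cases $q(A_t) X_t$ is of class $\cs$ by Lemma \ref{lemmatrans}(1) (as $q$ is a bounded Borel function) and of class $\cd$ by the above, while $q$ is non-vanishing; so Theorem \ref{thmrepL}(2) applies and delivers the full conclusion. I expect the only real content to be the choice of $q$: it must be bounded (to dominate the local-martingale part), non-vanishing (so that Theorem \ref{thmrepL}(2) is applicable and $X_\infty = Y_\infty/h(A_L)$ can be recovered), and such that $x \mapsto x\,q(x)$ is bounded (to dominate the finite-variation part $A_t$, whose growth is otherwise uncontrolled). The function $1/(1+|x|)$ meets all three demands at once. The remaining steps — the two domination estimates and the invocation of Theorem \ref{thmrepL}(2) — are routine, and no separate convergence or optional-stopping argument is needed since these are supplied by the theorem.
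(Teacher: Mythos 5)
Your proof is correct and follows essentially the same route as the paper: the paper's own proof simply asserts that $e^{-|A_t|}X_t$ is of class $\cd$ in both cases and invokes part (2) of Theorem \ref{thmrepL}, while you use the functionally equivalent weight $q(x)=1/(1+|x|)$ (bounded, non-vanishing, with $x\,q(x)$ bounded) and, helpfully, spell out the domination estimates that the paper leaves implicit. No gaps.
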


\begin{proof}
In both cases $e^{-|A_t|} X_t$ is of class $\cd$. So the corollary follows
from part (2) of Theorem \ref{thmrepL}.
\end{proof}

\begin{Remark}
For representations of the form \eqref{repL}, \eqref{repTfinite}
or \eqref{repTfiniteX} to hold
it is not sufficient that a process $(X_t)$ of class $\cs$ has an almost sure finite limit
$\lim_{t \to \infty} X_t$. For example,
$X_t = 1 - \exp(B_t - t/2)$ is of class $\cs$
with $X_0 = 0$ and $\lim_{t \to \infty} X_t = 1$ almost surely. But
$X_t = \p[L \le t \mid {\cal F}_t]$
cannot hold since there is a positive probability that $X_t$ is negative
and $\p[L \le t \mid {\cal F}_t]$ is always between $0$ and $1$.

Processes $(X_t)$ of class $\cs$
that are not of class $\cd$ but satisfy \eqref{repTfinite} and
\eqref{repTfiniteX} can be constructed
from strict local martingales as follows: Take a non-negative continuous
strict local martingale $(M_t)$ starting at
$m \in \mathbb{R}_+ \setminus \crl{0}$ such that
$\lim_{t \to \infty} M_t = 0$ almost surely
(for instance, $M_t = \N{B_t}^{-1}_2$ for a 3-dimensional Brownian motion
starting from a point $x \in \mathbb{R}^3 \setminus \crl{0}$ and
$\N{.}_2$ the Euclidean norm on $\mathbb{R}^3$). $(M_t)$ is a supermartingale but
not a martingale. So there exists
$u \in \mathbb{R}_+$ such that $\E{M_u} < m$, and it follows
from Lemma 2.1 and Proposition 2.3 of Elworthy et al. \cite{ELY}
that $\E{\overline{M}_t} = \infty$ for all $t \ge u$. Hence,
$X_t = \overline{M}_t - M_t$ is a non-negative process of class $\cs$
with $\lim_{t \to \infty} X_t = \overline{M}_{\infty}$ almost surely
and $\E{X_t} = \infty$ for all $t \ge u$.
Clearly, $(X_t)$ satisfies condition (2) of Corollary \ref{correpL}.
So
$$
f(A_T) X_T = \E{f(A_{\infty}) X_{\infty} 1_{\crl{L \le T}} \mid {\cal F}_T}
$$
for every Borel function $f : \mathbb{R} \to \mathbb{R}$ and stopping time
$T < \infty$, even though $X_{\infty}$ is not integrable and the conditional
expectation has to be understood in the sense of \eqref{extE}.
\end{Remark}

As a consequence of Lemma \ref{lemmasub} and Theorem \ref{thmrepL},
one obtains the following

\begin{corollary} \label{corMRY}
{\rm (Madan--Roynette--Yor \cite{MRY})}\\
Let $K$ be a constant and $(M_t)$ a local martingale
with no positive jumps such that $(M^-_t)$ is of class $\cd$.
Denote $g_K = \sup \crl{t : M_t \ge K}$. Then
\be \label{MRY}
(K-M_T)^+ = \E{(K-M_{\infty})^+ 1_{\crl{g_K \le T}} \mid {\cal F}_T},
\ee
for every stopping time $T$. In particular, if $M_{\infty} = m \in \mathbb{R}$, then
$$
(K-M_T)^+ = (K-m)^+ \p[g_K \le T \mid {\cal F}_T].
$$
\end{corollary}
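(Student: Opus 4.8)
The plan is to apply the last-passage representation of Theorem~\ref{thmrepL} (in the convenient form of Corollary~\ref{correpL}) to the process $X_t = (K-M_t)^+$, after checking that it is of class $\cs$ and identifying its last zero $L$ and terminal value $X_\infty$. First I would argue that $(X_t)$ is of class $\cs$: since $(M_t)$ is a local martingale, so is $(K-M_t)$, which is therefore of class $\cs$ with vanishing finite-variation part; and because $(M_t)$ has no positive jumps, $(K-M_t)$ has no negative jumps, so part~(2) of Lemma~\ref{lemmasub} shows that $X_t = (K-M_t)^+$ is again of class $\cs$. Writing $X_t = N_t + A_t$ for its decomposition and using $X_t \ge 0$, part~(3) of Lemma~\ref{lemmasub} gives $A_t \ge 0$.

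Next I would verify condition~(2) of Corollary~\ref{correpL}. Since $(X_t)$ is non-negative, $(X^-_t) \equiv 0$ is trivially of class $\cd$. For the martingale part, $A_t \ge 0$ gives $N_t = X_t - A_t \le X_t$, hence $N^+_t \le X_t = (K-M_t)^+ \le K^+ + M^-_t$. As $(M^-_t)$ is of class $\cd$ by hypothesis and $K^+$ is a constant, the dominating process $(K^+ + M^-_t)$ is of class $\cd$, and therefore so is $(N^+_t)$. Corollary~\ref{correpL} then applies and yields
$$ X_T = \E{X_\infty 1_{\crl{L \le T}} \mid {\cal F}_T} \quad \mbox{for all stopping times } T < \infty, $$
where $X_t \to X_\infty$ almost everywhere on $\crl{L < \infty}$ and $L = \sup\crl{t : X_t = 0}$.

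It remains to identify the quantities in this formula. Since $\crl{X_t = 0} = \crl{M_t \ge K}$, we have $L = \sup\crl{t : M_t \ge K} = g_K$. For $t > g_K$ one has $M_t < K$ and hence $X_t = K - M_t$; combined with $X_t \to X_\infty$ on $\crl{g_K < \infty}$, this shows that $(M_t)$ converges there to $M_\infty = K - X_\infty$, and since $X_\infty \ge 0$ we obtain $X_\infty = (K - M_\infty)^+$ on $\crl{g_K < \infty}$. Substituting $X_T = (K-M_T)^+$, $L = g_K$ and $X_\infty = (K-M_\infty)^+$ into the displayed identity gives \eqref{MRY}. The special case $M_\infty = m$ is then immediate: $X_\infty = (K-m)^+$ is deterministic and factors out of the conditional expectation, so that $(K-M_T)^+ = (K-m)^+ \, \p[g_K \le T \mid {\cal F}_T]$.

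The only genuine work lies in the verification of the class $\cd$ condition through the domination $N^+_t \le K^+ + M^-_t$, and in the correct identification of $X_\infty = (K-M_\infty)^+$ on $\crl{g_K < \infty}$ --- in particular, in observing that $(M_t)$ converges on that set (forced by $X_t = K - M_t$ for $t > g_K$) even though it need not converge on all of $\Omega$. Passing from finite stopping times to arbitrary ones, as in the statement, is a minor additional point, since for $T < \infty$ the indicator $1_{\crl{g_K \le T}}$ already restricts attention to $\crl{g_K < \infty}$ where $M_\infty$ and hence $X_\infty$ are well defined.
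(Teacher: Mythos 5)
Your argument is correct in substance and follows the same basic route as the paper: Lemma \ref{lemmasub}(2) to see that $X_t=(K-M_t)^+$ is of class $\cs$, identification of $L$ with $g_K$ and of $X_{\infty}$ with $(K-M_{\infty})^+$, and then a last-passage representation. The one real difference is where you check integrability. The paper simply observes that $0\le (K-M_t)^+\le K^+ + M^-_t$, so that $(X_t)$ itself is of class $\cd$, hence of class $\csd$, and Theorem \ref{thmrepL}(1) applies directly; this yields the representation for \emph{every} stopping time $T$ (including those taking the value $+\infty$) together with almost sure and $L^1$ convergence of $X_t$. You instead route through condition (2) of Corollary \ref{correpL}, bounding $N^+_t\le X_t\le K^+ + M^-_t$ via $A_t\ge 0$. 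That is valid, but it is a detour --- the very domination you use already shows $(X_t)$ is of class $\cd$ --- and it formally delivers the identity only for finite stopping times. Your closing remark does not quite bridge the gap to general $T$: on $\crl{T=\infty}$ one needs $X_{\infty}$ to be defined everywhere (with the appropriate value on $\crl{g_K=\infty}$), which Theorem \ref{thmrepL}(1) supplies through $L^1$ convergence but Corollary \ref{correpL} guarantees only on $\crl{L<\infty}$. Replacing the appeal to Corollary \ref{correpL} by the observation that $(X_t)$ is of class $\csd$ fixes this immediately. Your identification of $X_{\infty}=(K-M_{\infty})^+$ on $\crl{g_K<\infty}$ is carried out more explicitly than in the paper and is correct.
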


\begin{proof}
$K - M_t$ is a local martingale with no negative jumps. So it follows from
Lemma \ref{lemmasub} that $(K-M_t)^+$ is a local submartingale of class $\cs$.
Since $(M^-_t)$ is of class $\cd$, $(K-M_t)^+$ is of class $\csd$ and
\eqref{MRY} follows from Theorem \ref{thmrepL} by noting that
$g_K = \sup \crl{t : (K-M_t)^+ = 0}$.
\end{proof}

\begin{Remark}
If $K$ is a constant and $(M_t)$ a local martingale with no negative jumps such that
$(M^+_t)$ is of class $\cd$, one can apply Corollary \ref{corMRY} to $-K$, $(-M_t)$
and $g_K = \sup \crl{t : M_t \le K}$. This gives
\be \label{MRYcall}
(M_T - K)^+ = \E{(M_{\infty} - K)^+ 1_{\crl{g_K \le T}} \mid {\cal F}_T}
\ee
for all stopping times $T$. In particular, if
$M_{\infty} = m \in \mathbb{R}$, then
\be \label{MRYcall1}
(M_T - K)^+ = (m-K)^+ \p[g_K \le T \mid {\cal F}_T].
\ee
However, if for instance, $M_t = \exp(B_t - t/2)$ for a Brownian motion $(B_t)$,
the assumptions of Corollary \ref{corMRY} are satisfied but
$(M^+_t)$ is not of class $\cd$. So even though $M_{\infty} = 0$,
formula \eqref{MRYcall1} does not hold. Indeed, for $K \ge 0$ and
$T = t \in \mathbb{R}_+ \setminus \crl{0}$, the right-hand side is
zero but $\p[(M_t - K)^+ > 0] > 0$. For a more detailed discussion of this case,
we refer to Section 6 in Madan et al. \cite{MRY}.
\end{Remark}

The following extension of Corollary \ref{corMRY} has been proved
by Profeta et al. \cite{PRY} with methods from the
theory of enlargement of filtrations. We can deduce it under slightly
weaker assumptions from Lemma \ref{lemmaprod} and Theorem \ref{thmrepL}.

\begin{corollary} \label{corPRY}
{\rm (Profeta--Roynette--Yor \cite{PRY})}\\
Let $K^1, \dots, K^n$ be constants and $(M^1_t), \dots, (M^n_t)$
local martingales that are bounded from below and have no positive jumps.
Assume $[M^i,M^j]_t = 0$ for $i \neq j$ and denote
$g^i = \sup \crl{t : M^i_t \ge K^i}$. Then
\be \label{py}
\prod_{i=1}^n (K^i- M^i_T)^+ =
\E{\prod_{i=1}^n (K^i- M^i_{\infty})^+
1_{\crl{g^i \le T}} \mid {\cal F}_T},
\ee
for every stopping time $T$. In particular, if $M^i_{\infty} = m^i \in \mathbb{R}$
for all $i = 1, \dots, n$, then
$$
\prod_{i=1}^n (K^i- M^i_T)^+ =
\prod_{i=1}^n (K^i-m^i)^+ \p \edg{\bigvee_{i=1}^n g^i \le T \mid {\cal F}_T}
$$
\end{corollary}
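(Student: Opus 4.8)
The plan is to exhibit $\prod_{i=1}^n (K^i - M^i_t)^+$ as a single process of class $\csd$ and then read off \eqref{py} from part (1) of Theorem \ref{thmrepL}. Fix $i$. Since $(M^i_t)$ has no positive jumps, $K^i - M^i_t$ is a local martingale with no negative jumps, so Lemma \ref{lemmasub} shows that $X^i_t := (K^i - M^i_t)^+$ is a non-negative local submartingale of class $\cs$, whose last zero is
$$
\sup \crl{t : X^i_t = 0} = \sup \crl{t : M^i_t \ge K^i} = g^i .
$$
Because $(M^i_t)$ is bounded from below, say $M^i_t \ge -c^i$, one has $0 \le X^i_t \le (K^i + c^i)^+$, so each $(X^i_t)$ is a bounded, hence class $\cd$, process; in particular it is of class $\csd$ and, by Lemma \ref{lemmasub}(4), converges to $X^i_\infty = (K^i - M^i_\infty)^+$.

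To multiply the factors I would verify the hypothesis $\edg{X^i, X^j}_t = 0$ of Lemma \ref{lemmaprod}. By the Tanaka-type decomposition in the proof of Lemma \ref{lemmasub} applied to $K^i - M^i_t$, the continuous local martingale part of $(X^i_t)$ is $-\int_0^t 1_{\crl{M^i_{u-} < K^i}} dM^{i,c}_u$, where $M^{i,c}$ is the continuous martingale part of $M^i$. Its covariation with the continuous part of $(X^j_t)$ is therefore controlled by $d\ang{M^{i,c}, M^{j,c}}_u$. Writing $\edg{M^i, M^j}_t = \ang{M^{i,c}, M^{j,c}}_t + \sum_{s \le t} \Delta M^i_s \Delta M^j_s$ and using that a continuous process plus a pure-jump process can vanish identically only if both parts vanish, the assumption $\edg{M^i, M^j}_t = 0$ gives simultaneously $\ang{M^{i,c}, M^{j,c}} \equiv 0$ and $\Delta M^i_s \Delta M^j_s = 0$ for every $s$. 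The first fact kills the continuous covariation of $(X^i_t)$ and $(X^j_t)$; the second shows their jumps never coincide, since $\Delta X^i_s \neq 0$ forces $\Delta M^i_s \neq 0$. Hence $\edg{X^i, X^j}_t = 0$, and Lemma \ref{lemmaprod} yields that $P_t := \prod_{i=1}^n X^i_t$ is of class $\cs$; being bounded, it is of class $\csd$.

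It remains to identify the last zero of $(P_t)$ and apply Theorem \ref{thmrepL}. As $P_t = 0$ exactly when some factor vanishes, $\crl{t : P_t = 0} = \bigcup_i \crl{t : X^i_t = 0}$ and its supremum is $\max_i g^i = \bigvee_i g^i$. Thus part (1) of Theorem \ref{thmrepL} (with $f \equiv 1$) gives $P_\infty = \prod_i (K^i - M^i_\infty)^+$ and
$$
\prod_{i=1}^n (K^i - M^i_T)^+
= \E{\prod_{i=1}^n (K^i - M^i_\infty)^+ \, 1_{\crl{\bigvee_i g^i \le T}} \mid {\cal F}_T}
$$
for every stopping time $T$. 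Since $1_{\crl{\bigvee_i g^i \le T}} = \prod_i 1_{\crl{g^i \le T}}$, this is exactly \eqref{py}, and the special case follows by factoring the deterministic constants $(K^i - m^i)^+$ out of the conditional expectation.

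The one step that needs genuine care is the vanishing of $\edg{X^i, X^j}_t$: one must argue that passing from $(M^i_t)$ to its truncated positive part neither introduces new continuous covariation (handled by the indicator $1_{\crl{M^i_{u-} < K^i}}$ together with $\ang{M^{i,c}, M^{j,c}} = 0$) nor creates simultaneous jumps (handled by $\Delta M^i_s \Delta M^j_s = 0$). Once this is in place, the result is a direct combination of Lemma \ref{lemmaprod} and Theorem \ref{thmrepL}.
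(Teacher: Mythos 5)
Your proof is correct and follows essentially the same route as the paper's: Lemma \ref{lemmasub} makes each $(K^i-M^i_t)^+$ a bounded process of class $\cs$, Lemma \ref{lemmaprod} handles the product, and Theorem \ref{thmrepL} gives the representation. The only difference is that you carefully verify $\edg{X^i,X^j}_t=0$ (splitting $\edg{M^i,M^j}$ into its continuous and jump parts), a step the paper asserts without proof; that added detail is sound and welcome.
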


\begin{proof}
By Lemma \ref{lemmasub}, $X^i_t = (K^i - M^i_t)^+$ are local submartingales of class $\cs$
such that $\edg{X^i,X^j}_t = 0$ for $i \neq j$. So we obtain from Lemma \ref{lemmaprod}
that $\prod_{i=1}^n X^i_t$ is again of class $\cs$, which since
all $(M^i_t)$ are bounded from below, is bounded. Now \eqref{py} follows from
Theorem \ref{thmrepL}.
\end{proof}

\begin{Remark}
If $(X_t)$ satisfies the assumptions of part (2) of Theorem \ref{thmrepL} or
Corollary \ref{correpL}, then there exists a random variable $X_{\infty}$ such that
$X_t \to X_{\infty}$ almost everywhere on the set $\crl{L < \infty}$,
and one has
\be \label{Xrep}
X_t = \E{X_{\infty} 1_{\crl{L \le t}} \mid {\cal F}_t} \quad
\mbox{for all } t \ge 0.
\ee
In particular, the whole process $(X_t)$ can be recovered from $X_{\infty}$
and $L$. If $(X_t)$ is non-negative, equation \eqref{Xrep} can be rewritten as
\be \label{QMRY}
\E{1_{F_t} X_t} = {\cal Q}[F_t \cap \crl{g \le t}] \quad
\mbox{for every } t \ge 0 \mbox{ and all } F_t \in {\cal F}_t,
\ee
where $g=L$ and ${\cal Q}$ is the $\sigma$-finite measure given by
$d{\cal Q}/d\p = X_{\infty}$. Madan et al. \cite{MRY} raised the question
for which non-negative submartingales $(X_t)$ is it possible to find
a random time $g$ and a $\sigma$-finite measure ${\cal Q}$
such that \eqref{QMRY} holds.
It turns out that if $(X_t)$ satisfies \eqref{QMRY}, where $g$ is the end
of an optional set with the property
\be \label{avoid}
\p[g = T] = 0 \quad \mbox{for all stopping times } T
\ee
and ${\cal Q}$ is of the form $d {\cal Q}/d\p = X$ for an
integrable random variable $X > 0$,
then $(X_t)$ is a submartingale of class $\csd$ with
$X_{\infty} = \E{X \mid \bigvee_t{\cal F}_t}$ and $L = g$.
For $X = 1$, this holds because in this case, $(X_t)$ can be written
as $X_t = 1 - Z^g_t$ for the Az\'ema supermartingale
$Z^g_t = \p[g > t \mid {\cal F}_t]$. Moreover, it follows from \eqref{avoid} that
$g$ is the end of a predictable set and the finite variation part
$A^g_t$ of $Z^g_t$ is continuous. It is shown in Az\'ema \cite{A} that under
these circumstances, one has
$\int_0^t 1_{\crl{Z^g_u \neq 1}} dA^g_u = 0$
and $g = \sup \crl{t : Z^R_t = 1}$, which means that $(X_t)$
is a submartingale of class $\csd$ with $L = g$.
The case of general integrable $X > 0$ follows from an application
of the optional section theorem (see page 136 of Dellacherie et al. \cite{DMM}).
A more thorough discussion of this
problem, also treating the case where ${\cal Q}$ is a $\sigma$-finite measure,
is the subject of the paper Najnudel and Nikeghbali \cite{NN}.
\end{Remark}

\subsection{Stochastic integral representations and conditional distributions}
\label{subsecStoInt}

We here use Lemma \ref{lemmatrans} to derive representation results for non-negative
processes $(X_t)$ of class $\cs$ and Borel functions $f : \mathbb{R}_+ \to \mathbb{R}_+$
such that $f(A_t)X_t$ converges to $1$ as $t \to \infty$. In Section 4 they will
be applied in situations where $f(A_t)X_t$ can be stopped with a stopping time
$R$ such that $f(A_R)X_R = 1$.

\begin{theorem} \label{thmfX}
Let $(X_t)$ be a non-negative process of class $\cs$ and
$f : \mathbb{R}_+ \to \mathbb{R}_+$ a locally bounded Borel function
such that the process $f(A_t)X_t$ is of class $\cd$ and $f(A_t)X_t \to 1$
almost surely. Denote $F(x) = \int_0^x f(y) dy$.

\begin{itemize}
\item[{\rm (1)}]
If $F(\infty) < \infty$, then $A_t = 0$ and $f(0)X_t =1$ for all $t \ge 0$.
\item[{\rm (2)}]
If $F(\infty) = \infty$, then $L < \infty$, $A_L = A_{\infty} < \infty$ and
$X_t \to X_{\infty}$ almost surely for a random variable $X_{\infty} > 0$.
Moreover, for every stopping time $T$ one has
\be \label{fAX}
f(A_T)X_T = \p[L \le T \mid {\cal F}_T]
\ee
and for all Borel functions $h : \mathbb{R}_+ \to \mathbb{R}$ satisfying
\be \label{condh}
\int_0^{\infty} |h(y)| e^{-F(y)} dF(y) < \infty,
\ee
\bea
\label{hedgingA}
\E{h(A_{\infty}) \mid {\cal F}_T}
&=& h(0)f(0)X_0 + h^F(0)(1-f(0)X_0) + \int_0^T (h - h^F)(A_u) f(A_u) dN_u\\
\label{hA}
&=& h(A_T) f(A_T) X_T + h^F(A_T)(1 - f(A_T)X_T),
\eea
where
$$
h^F(x) = e^{F(x)} \int_x^{\infty} h(y) e^{-F(y)} dF(y), \quad x \ge 0.
$$
In particular, conditioned on ${\cal F}_T$, the law of $A_{\infty}$ is given by
\be \label{distA}
\p[A_{\infty} > x \mid {\cal F}_T] = 1_{\crl{A_T > x}}
+ 1_{\crl{A_T \le x}} (1-f(A_T)X_T) e^{F(A_T) - F(x)}, \quad x \ge 0.
\ee
\end{itemize}
\end{theorem}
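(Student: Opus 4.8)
The plan is to reduce everything to the transformed process $Y_t = f(A_t)X_t$. By part (1) of Lemma \ref{lemmatrans}, $(Y_t)$ is of class $\cs$ with decomposition $Y_t = f(0)X_0 + \int_0^t f(A_u)\,dN_u + F(A_t)$, and since it is assumed of class $\cd$, it is of class $\csd$. Lemma \ref{lemmasub}(4) then shows that $M_t = f(0)X_0 + \int_0^t f(A_u)\,dN_u$ is a uniformly integrable martingale, that $F(A_t)$ is increasing of integrable total variation, and that $L^1$-limits exist; the hypothesis $Y_t \to 1$ forces $Y_\infty = 1$ and $M_\infty = 1 - F(A_\infty)$ with $F(A_\infty)$ integrable. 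A useful estimate is $Y_T \le 1$ for every stopping time $T$, obtained from $Y_T = 1 - \E{F(A_\infty) - F(A_T) \mid {\cal F}_T} \le 1$, using that $(A_t)$ is increasing (Lemma \ref{lemmasub}(3)) and $F$ nondecreasing. For part (1), suppose $F(\infty) < \infty$. As in the proof of Corollary \ref{corfiniteR}, $e^{F(A_t)}(1 - Y_t)$ is a local martingale; since $0 \le Y_t \le 1$ and $1 \le e^{F(A_t)} \le e^{F(\infty)}$, it is bounded, hence uniformly integrable, and its terminal value $e^{F(A_\infty)}(1 - Y_\infty) = 0$ forces it to vanish identically. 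Thus $Y_t \equiv 1$, so $X_t = 1/f(A_t) > 0$ never hits $0$, and by condition (3) of the definition of class $\cs$ the process $(A_t)$ cannot move, giving $A_t = 0$ and $f(0)X_t = 1$.

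For part (2), assume $F(\infty) = \infty$. Since $F(A_\infty)$ is integrable and hence finite while $F$ is continuous, increasing and unbounded, $A_\infty < \infty$; because $(A_t)$ is constant after $L$, this yields $A_L = A_\infty < \infty$. If $\p[L = \infty] > 0$, there is a sequence $t_n \uparrow \infty$ with $X_{t_n} = 0$, hence $Y_{t_n} = 0$, contradicting $Y_t \to 1$; so $L < \infty$. For $t > L$ one has $A_t = A_\infty$, so $Y_t = f(A_\infty)X_t \to 1$, which forces $f(A_\infty) > 0$ and $X_t \to X_\infty = 1/f(A_\infty) > 0$. Formula \eqref{fAX} then follows by applying part (1) of Theorem \ref{thmrepL} to the class-$\csd$ process $(Y_t)$ with the constant function $1$: this gives $Y_T = \pr{L_Y \le T \mid {\cal F}_T}$ with $L_Y = \sup \crl{t : Y_t = 0}$, and $L_Y = L$ since $X_t = 0 \Rightarrow Y_t = 0$ while $Y_t = f(A_\infty)X_t > 0$ for all $t > L$.

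The heart of the matter is \eqref{hedgingA}--\eqref{hA}. I would introduce
$$
\Psi_t = h^F(A_t) + \big(h(A_t) - h^F(A_t)\big) f(A_t) X_t .
$$
The key algebraic fact is the identity $\tfrac{d}{dx} h^F(x) = f(x)\big(h^F(x) - h(x)\big)$, obtained by differentiating the defining integral. Setting $\phi = (h - h^F)f = -(h^F)'$ and applying Lemma \ref{lemmatrans}(1) to $\phi(A_t)X_t$, the finite-variation terms $h^F(A_t)$ and $\Phi(A_t)$, where $\Phi(x) = \int_0^x \phi(y)\,dy$, sum to the constant $h^F(0)$, so
$$
\Psi_t = h(0)f(0)X_0 + h^F(0)\big(1 - f(0)X_0\big) + \int_0^t (h - h^F)(A_u) f(A_u)\,dN_u
$$
is a local martingale, and $\Psi_T$ equals the right-hand side of \eqref{hA}. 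Because $(A_t)$ is eventually equal to $A_\infty$ and $Y_t \to 1$, one gets $\Psi_\infty = h(A_\infty)$ with no continuity assumption on $h$ (continuity of $h^F$, which follows from \eqref{condh}, suffices). The conditional law \eqref{distA} is the special case $h = 1_{(x,\infty)}$, for which a direct computation gives $h^F(z) = e^{F(z)-F(\max(z,x))}$ and a bounded $\Psi$.

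The main obstacle I anticipate is upgrading $(\Psi_t)$ from a local to a uniformly integrable martingale under the single condition \eqref{condh}, so that $\E{h(A_\infty) \mid {\cal F}_T} = \Psi_T$: the algebra is routine once the ODE for $h^F$ is spotted, but verifying integrability of $h(A_\infty)$ and the passage to the limit (convergence of both $h^F$ and the stochastic integrals) requires care. I would handle this by proving \eqref{distA} first (there $\Psi$ is bounded, hence uniformly integrable), reading off from it that
$$
\E{|h(A_\infty)| \mid {\cal F}_0} \le |h(A_0)| f(A_0)X_0 + \big(1 - f(A_0)X_0\big)e^{F(A_0)}\int_{A_0}^\infty |h(y)| e^{-F(y)}\,dF(y) < \infty ,
$$
and then treating bounded, compactly supported $h$ (again giving bounded $\Psi$) before passing to the limit, with \eqref{condh} providing the domination that justifies all convergences.
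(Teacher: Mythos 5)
Your proposal is correct and follows essentially the same route as the paper: reduce to $Y_t=f(A_t)X_t$ of class $\csd$ via Lemma \ref{lemmatrans}, get \eqref{fAX} from Theorem \ref{thmrepL}, obtain \eqref{hedgingA}--\eqref{hA} by applying Lemma \ref{lemmatrans} to $\varphi=(h-h^F)f$ (whose primitive is $h^F(0)-h^F$), and extend from bounded $h$ to \eqref{condh} via \eqref{distA}. The only cosmetic deviation is part (1), where you use the bounded local martingale $e^{F(A_t)}(1-Y_t)$ directly while the paper plugs $h\equiv 1$ into the general identity --- the same computation in different clothing.
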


\begin{proof}
Since $f(A_t)X_t \to 1$ almost surely,
one has $L < \infty$ and $A_t = A_{t \wedge L}$. In particular, $A_L = A_{\infty}$
and there exists a random variable $X_{\infty} > 0$ such that
$X_t \to X_{\infty}$ almost surely.
By Lemma \ref{lemmatrans}, the process $f(A_t)X_t$ is of class
$\csd$. Hence it follows from Theorem \ref{thmrepL} that for every
stopping time $T$,
\be \label{repf}
f(A_T)X_T = \p[L^f \le T \mid {\cal F}_T],
\ee
where $L^f = \sup \crl{t : f(A_t)X_t = 0}$. However, since
$f(A_t)X_t \to 1$, one has $L = L^f$ and \eqref{fAX} follows from \eqref{repf}.
Now let $h : \mathbb{R}_+ \to \mathbb{R}$ be a bounded Borel function.
Then the function
$$
h^F(x) = e^{F(x)} \int_x^{\infty} h(y) e^{-F(y)} dF(y)
$$
is bounded as well. Note that $\Phi(x) = h^F(0) - h^F(x)$ is of the form
$\Phi(x) = \int_0^x \varphi(y) dy$ for $\varphi = (h - h^F) f$.
So one obtains from Lemma \ref{lemmatrans} that $\varphi(A_t) X_t$
is of class $\csd$ and
$$
\varphi(0)X_0 + \int_0^T \varphi(A_u) dN_u
= \varphi(A_t)X_T - \Phi(A_T)
= \E{\varphi(A_{\infty})X_{\infty} - \Phi(A_{\infty}) \mid {\cal F}_T}
$$
for every stopping time $T$. This yields
\bea
\label{hfX1}
\E{h(A_{\infty}) \mid {\cal F}_T}
&=& h(0) f(0) X_0 + h^F(0)(1-f(0)X_0) + \int_0^T (h-h^F)(A_u) f(A_u) dN_u\\
\label{hfX2}
&=& h(A_T) f(A_T) X_T + h^F(A_T)(1 - f(A_T)X_T).
\eea
\eqref{hfX2} applied to $h \equiv 1$ gives
$e^{F(A_t) - F(\infty)} (1 - f(A_t)X_t) = 0$ for all $t \ge 0$.
Hence, for $F(\infty) < \infty$, one must have $f(A_t)X_t = 1$ for all $t \ge 0$, and (1) follows.
If $F(\infty) = \infty$, then formula \eqref{hfX2}
is equivalent to \eqref{distA}, which shows that
condition \eqref{condh} implies $\E{|h(A_{\infty})|\mid {\cal F}_T} < \infty$.
So both equations \eqref{hfX1} and \eqref{hfX2} extend from
bounded $h$ to functions that satisfy \eqref{condh}.
\end{proof}

Theorem \ref{thmfX} will allow us to obtain general results on drawdown and
relative drawdown processes of local martingales in Section \ref{secDD}.
But to extend these results to diffusions we will need the
following generalization of Theorem \ref{thmfX}. The proof is
similar but involves an additional approximation argument.

\begin{theorem} \label{thmfXg}
Let $(X_t)$ be a non-negative process of class $\cs$ and
$f : \mathbb{R}_+ \to \mathbb{R}_+$ a Borel function for which
there exists an increasing sequence $(a_n)_{n \in \mathbb{N}}$
in $(0,\infty)$ such that $f 1_{[0,a_n]}$ is bounded for all $n \in \mathbb{N}$, and
$f(x) = 0$ for $x \ge a = \lim_{n \to \infty} a_n$. Denote
$F(x) = \int_0^x f(y) dy$ and assume
that the process $f(A_t)X_t$ is of class $\cd$ and $f(A_t)X_t \to 1$
almost surely.

\begin{itemize}
\item[{\rm (1)}]
If $F(a) < \infty$, then $A_t = 0$ and $f(0)X_t =1$ for all $t \ge 0$.
\item[{\rm (2)}]
If $F(a) = \infty$, then $L < \infty$, $A_L = A_{\infty} < a$ and
$X_t \to X_{\infty}$ almost surely for a random variable $X_{\infty} > 0$.
Moreover, for every stopping time $T$ one has
\be \label{fAXg}
f(A_T)X_T = \p[L \le T \mid {\cal F}_T]
\ee
and for all Borel functions $h : [0,a) \to \mathbb{R}$ satisfying
\be \label{condhg}
\int_0^a |h(y)| e^{-F(y)} dF(y) < \infty,
\ee
\bea
\label{hedgingAg}
\E{h(A_{\infty}) \mid {\cal F}_T}
&=& h(0)f(0)X_0 + h^F(0)(1-f(0)X_0) + \int_0^T (h - h^F)(A_u) f(A_u) dN_u\\
\label{hAg}
&=& h(A_T) f(A_T) X_T + h^F(A_T)(1 - f(A_T)X_T),
\eea
where
$$
h^F(x) = e^{F(x)} \int_x^a h(y) e^{-F(y)} dF(y), \quad 0 \le x < a.
$$
In particular, conditioned on ${\cal F}_T$, the law of $A_{\infty}$ is
given by
\be \label{distAg}
\p[A_{\infty} > x \mid {\cal F}_T] = 1_{\crl{A_T > x}}
+ 1_{\crl{A_T \le x}} (1-f(A_T)X_T) e^{F(A_T) - F(x)}, \quad x \ge 0.
\ee
\end{itemize}
\end{theorem}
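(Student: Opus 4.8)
The plan is to mirror the proof of Theorem \ref{thmfX} almost verbatim, with $\infty$ replaced by $a$ throughout; the one genuinely new point — and the main thing to get right — is that $A$ never reaches $a$, which is what lets us tame $f$ near its singularity. First I would record the preliminary facts exactly as in Theorem \ref{thmfX}: the convergence $f(A_t)X_t \to 1$ forces $L < \infty$, and since $A$ moves only on $\crl{X=0} \subseteq [0,L]$ one has $A_t = A_{t\wedge L}$, hence $A_\infty = A_L$; for $t > L$ we have $f(A_t)X_t = f(A_L)X_t \to 1$, which first shows $f(A_L) \neq 0$ and then $X_t \to X_\infty := 1/f(A_L) > 0$. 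Thus $f(A_\infty)X_\infty = 1$, so $f(A_\infty) \neq 0$, and since $f \equiv 0$ on $[a,\infty)$ this yields the crucial strict bound $A_\infty < a$; because $X \ge 0$, $A$ is increasing by Lemma \ref{lemmasub}(3), so in fact $A_t \le A_\infty < a$ for every $t$.

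With $A$ confined to $[0,a)$, the next step is to localize so that Lemma \ref{lemmatrans} becomes applicable even though $f$ need not be locally bounded at $a$. I would set $S_n = \inf\crl{t : A_t \ge a_n}$; since $A_\infty < a = \lim_n a_n$ and $A$ is continuous, $S_n = \infty$ for all large $n$ pathwise, so $S_n \uparrow \infty$. On $[0,S_n]$ the process $A$ stays in $[0,a_n]$, where $f$ is bounded, so applying Lemma \ref{lemmatrans}(1) to $f 1_{[0,a_n]}$ and letting $n \to \infty$ shows that $f(A_t)X_t$ is of class $\cs$ with decomposition $f(A_t)X_t = f(0)X_0 + \int_0^t f(A_u)\,dN_u + F(A_t)$, the integral being defined through the same localization. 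Combined with the standing hypothesis that $f(A_t)X_t$ is of class $\cd$, this makes it of class $\csd$, so Theorem \ref{thmrepL} gives $f(A_T)X_T = \p[L^f \le T \mid {\cal F}_T]$ with $L^f = \sup\crl{t : f(A_t)X_t = 0}$; and $L^f = L$, since $\crl{X=0}\subseteq\crl{f(A)X=0}$ gives $L \le L^f$ while $f(A_t)=f(A_L)\neq 0$ and $X_t \neq 0$ for $t>L$ give $L^f \le L$. This establishes \eqref{fAXg}.

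For the representation formulas I would fix a bounded Borel $h:[0,a)\to\mathbb{R}$ and let $h^F$ be as in the statement. From $\int_x^a e^{-F(y)}\,dF(y) = e^{-F(x)}-e^{-F(a)}$ one gets $\abs{h^F(x)} \le \N{h}$, so $h^F$ is bounded, and $\Phi(x):=h^F(0)-h^F(x)=\int_0^x \varphi(y)\,dy$ for $\varphi:=(h-h^F)f$, a function again bounded on each $[0,a_n]$, vanishing on $[a,\infty)$, with $\abs{\varphi(A_t)X_t} \le 2\N{h}\,f(A_t)X_t$. The same localization applies to $\varphi$, so $\varphi(A_t)X_t$ is of class $\cs$, and it is of class $\cd$ by domination, hence of class $\csd$; Lemma \ref{lemmatrans}(2) then yields $\varphi(0)X_0 + \int_0^T \varphi(A_u)\,dN_u = \E{\varphi(A_\infty)X_\infty-\Phi(A_\infty)\mid{\cal F}_T}$ for every stopping time $T$. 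Using $f(A_\infty)X_\infty=1$ to rewrite $\varphi(A_\infty)X_\infty-\Phi(A_\infty) = h(A_\infty)-h^F(0)$, substituting back and rearranging gives \eqref{hedgingAg} and \eqref{hAg} for bounded $h$.

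Finally I would split the cases and extend, as in Theorem \ref{thmfX}. Taking $h\equiv 1$, so that $h^F(x)=1-e^{F(x)-F(a)}$, formula \eqref{hAg} collapses to $e^{F(A_T)-F(a)}(1-f(A_T)X_T)=0$ for all $T$; if $F(a)<\infty$ the exponential is strictly positive, forcing $f(A_t)X_t=1$ for all $t$, whence $X_t\neq 0$ for all $t$ and the support property (3) of class $\cs$ forces $A_t\equiv 0$ and $f(0)X_t=1$, which is (1). If $F(a)=\infty$, taking $h=1_{(x,a)}$ in \eqref{hAg} gives \eqref{distAg}, from which the finiteness $\E{\abs{h(A_\infty)}\mid{\cal F}_T}<\infty$ under \eqref{condhg} can be read off, so a truncation and dominated-convergence argument extends \eqref{hedgingAg} and \eqref{hAg} to all $h$ satisfying \eqref{condhg}. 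The main obstacle I anticipate is purely at the singularity $a$: everything hinges on the strict inequality $A_\infty<a$, without which neither $\int_0^{\cdot} f(A_u)\,dN_u$ nor $h^F$ near $a$ would be under control; once that bound is in hand the localization reduces the whole argument to the bounded case already handled in Theorem \ref{thmfX}.
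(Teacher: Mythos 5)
Your proof is correct, and its skeleton is the one the paper uses for both Theorem \ref{thmfX} and Theorem \ref{thmfXg}: obtain the decomposition of $f(A_t)X_t$ from Lemma \ref{lemmatrans}, apply Theorem \ref{thmrepL} to get \eqref{fAXg}, and run the $\varphi=(h-h^F)f$ device for \eqref{hedgingAg}--\eqref{hAg}. Where you genuinely diverge is precisely in the ``additional approximation argument'' that separates Theorem \ref{thmfXg} from Theorem \ref{thmfX}, namely the treatment of the possible unboundedness of $f$ near $a$. The paper truncates vertically: it sets $f^n=f\wedge n$, derives the identities for each $f^n(A_t)X_t$ (of class $\csd$ by Lemma \ref{lemmatrans} and domination), and passes to the limit $n\to\infty$ inside those identities; the bound $A_\infty<a$ is only extracted afterwards, by plugging $h=1_{[a,\infty)}$ into the limiting formula at $T=0$. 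You instead observe at the outset that $f(A_L)X_t\to 1$ for $t>L$ forces $f(A_L)\neq 0$, which together with $f\equiv 0$ on $[a,\infty)$ already gives $A_\infty=A_L<a$, and then localize horizontally with $S_n=\inf\crl{t:A_t\ge a_n}$, which are eventually infinite pathwise; on $[0,S_n]$ one may replace $f$ by the bounded function $f1_{[0,a_n]}$, so the class-$\cs$ decompositions of $f(A_t)X_t$ and $\varphi(A_t)X_t$ follow with no limit passage in the identities themselves. Your route is arguably cleaner: it isolates the one new fact ($A_\infty<a$) as an elementary observation and reduces everything else to the bounded case of Theorem \ref{thmfX}, whereas the paper must justify the convergence of $h^n$ to $h^F$ and of the stochastic integrals as $n\to\infty$. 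The small price is that you should say explicitly that the class-$\cs$ property survives this time-localization (the local martingale parts agree on $[0,S_n]$ and $S_n\uparrow\infty$), and that your appeal to Lemma \ref{lemmatrans}(2) for a $\varphi$ that need not be locally bounded on all of $\mathbb{R}_+$ really rests on Lemma \ref{lemmasub}(4) applied to the $\csd$ decomposition you have just constructed --- a presentational point, not a gap.
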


\begin{proof}
It follows as in the proof of Theorem \ref{thmfX} that $L < \infty$,
$A_L = A_{\infty}$ and $X_t \to X_{\infty}$ almost surely for a
random variable $X_{\infty} > 0$.
Now set $f^n = f \wedge n$ and $F^n(x) = \int_0^x f^n(y) dy$, $n \in \mathbb{N}$.
It follows from Lemma \ref{lemmatrans} that the processes
$f^n(A_t)X_t$, $n \in \mathbb{N}$, are of class
$\csd$. So one obtains from Theorem \ref{thmrepL} that
\be \label{repfn}
f^n(A_T)X_T = \E{f^n(A_{\infty})X_{\infty} 1_{\crl{L^n \le T}} \mid {\cal F}_T}
\quad \mbox{for every stopping time } T,
\ee
where $L^n = \sup \crl{t : f^n(A_t)X_t = 0}$. The fact that
$f^n(A_t)X_t \to f^n(A_{\infty}) X_{\infty} > 0$ entails that $L = L^n$, and
one obtains \eqref{fAXg} from \eqref{repfn} by letting $n$ tend to $\infty$.
Now let $h : \mathbb{R}_+ \to \mathbb{R}$ be a bounded Borel function.
Then the functions
$$
h^F(x) = e^{F(x)} \int_x^{\infty} h(y) e^{-F(y)} dF(y) \quad \mbox{and} \quad
h^n(x) = e^{F^n(x)} \int_x^{\infty} h(y) e^{-F^n(y)} dF^n(y), \quad n \in \mathbb{N},
$$
are bounded too. $\Phi^n(x) = h^n(0) - h^n(x)$ can be written as
$\Phi^n(x) = \int_0^t \varphi^n(y) dy$ for $\varphi^n = (h - h^n) f^n$.
So it follow from Lemma \ref{lemmatrans} that $\varphi^n(A_t) X_t$
is of class $\csd$ and
$$
\varphi^n(0)X_0 + \int_0^T \varphi^n(A_u) dN_u
= \varphi^n(A_t)X_T - \Phi^n(A_T)
= \E{\varphi^n(A_{\infty})X_{\infty} - \Phi^n(A_{\infty}) \mid {\cal F}_T}
$$
for every stopping time $T$. In the limit $n \to \infty$, this gives
\bea
\label{hfX1g}
\E{h(A_{\infty}) \mid {\cal F}_T}
&=& h(0) f(0) X_0 + h^F(0)(1-f(0)X_0) + \int_0^T (h-h^F)(A_u) f(A_u) dN_u\\
\label{hfX2g}
&=& h(A_T) f(A_T) X_T + h^F(A_T)(1 - f(A_T)X_T).
\eea
For $h = 1_{[a,\infty)}$ and $T= 0$, the equality between the first and third term
reduces to $\p[A_{\infty} \ge a] = 0$. So $A_{\infty} < a$ and
\eqref{hfX2g} applied to $h \equiv 1$ gives
$e^{F(A_t) - F(a)} (1 - f(A_t)X_t) = 0$ for all $t \ge 0$.
Hence, for $F(a) < \infty$, one must have $f(A_t)X_t = 1$ for all $t \ge 0$, and (1) follows.
If $F(a) = \infty$, formula \eqref{hfX2g}
is equivalent to \eqref{distAg}. This shows that
condition \eqref{condhg} implies $\E{|h(A_{\infty})|\mid {\cal F}_T} < \infty$,
and both equations \eqref{hfX1g} and \eqref{hfX2g} extend from
bounded $h$ to functions satisfying \eqref{condhg}.
\end{proof}

\setcounter{equation}{0}
\section{Drawdown and relative drawdown}
\label{secDD}

\subsection{The local martingale case}
\label{subsecddm}

We first consider a local martingale $(M_t)$ starting
at $m \in \mathbb{R}$ such that the running supremum $(\overline{M}_t)$ is continuous.
Then the drawdown process $DD_t = \overline{M}_t - M_t$ is a non-negative local
submartingale of class $\cs$ with decomposition $(m-M_t) + (\overline{M}_t-m)$.
Moreover, if $m > 0$, then the relative drawdown process
$rDD_t = DD_t/\overline{M}_t$ is well-defined, and by Lemma \ref{lemmatrans},
also a non-negative local submartingale of class $\cs$.
As a consequence of the results of Subsection \ref{subsecRepLP} one obtains
the following

\begin{proposition}
Assume that $(M^-_t)$ is of class $\cd$ and denote
$g = \sup \crl{t : M_t = \overline{M}_t}$. Then there exists a
random variable $DD_{\infty}$ such that
$DD_t \to DD_{\infty}$ almost everywhere on $\crl{g < \infty}$ and
\be \label{DDrepL}
DD_T = \E{DD_{\infty} 1_{\crl{g \le T}} \mid {\cal F}_T}
\ee
for every stopping time $T < \infty$. Moreover, if $m > 0$, then
there exists an integrable random variable $rDD_{\infty}$ such that
$rDD_t \to rDD_{\infty}$ almost surely as well as in $L^1$ and
\bea
\label{rDDrepL}
rDD_T &=& \E{rDD_{\infty}
1_{\crl{g \le T}} \mid {\cal F}_T}\\
\label{rDDhedging}
&=& - \int_0^T \frac{dM_u}{\overline{M}_u} + \log(\overline{M}_t) - \log(m)\\
\label{rDDdist}
&=& \E{rDD_{\infty}
- \log(\overline{M}_{\infty}) \mid {\cal F}_T} + \log(\overline{M}_t).
\eea
for all stopping times $T$.
\end{proposition}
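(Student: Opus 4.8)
The plan is to handle the two parts separately, in each case identifying the class $\cs$ decomposition of the relevant process and then feeding it into the representation results of Subsection \ref{subsecRepLP}.

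For part (1) I would work with the decomposition $DD_t = N_t + A_t$, $N_t = m - M_t$, $A_t = \overline{M}_t - m$, noted in the preamble, and observe that the last zero of $DD_t$ coincides with $g$, since $DD_t = 0$ exactly when $M_t = \overline{M}_t$. The aim is to apply Corollary \ref{correpL} with $f \equiv 1$, and for this I must check its condition (2). Since $DD_t \ge 0$, one has $DD_t^- = 0$, trivially of class $\cd$; and the elementary bound $N_t^+ = (m - M_t)^+ \le m^+ + M_t^-$ shows that $(N_t^+)$ inherits the class $\cd$ property from the hypothesis on $(M_t^-)$. Corollary \ref{correpL} then furnishes the limit $DD_\infty$ on $\crl{g < \infty}$ and the representation \eqref{DDrepL}.

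For part (2) with $m > 0$, the key is to write $rDD_t = f(A_t) DD_t$ for the locally bounded Borel function $f(x) = (m+x)^{-1}$, so that $f(A_t) = \overline{M}_t^{-1}$. Because $\overline{M}_t \ge m > 0$, the factor $f(A_t)$ never vanishes, so the last zero of $rDD_t$ is again $g$. Setting $F(x) = \int_0^x (m+y)^{-1} dy$, so that $F(A_t) = \log(\overline{M}_t) - \log(m)$, and using $DD_0 = 0$, Lemma \ref{lemmatrans}(1) shows that $rDD_t$ is of class $\cs$ with decomposition
$$
rDD_t = -\int_0^t \frac{dM_u}{\overline{M}_u} + \log(\overline{M}_t) - \log(m),
$$
which is \eqref{rDDhedging}. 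I would then verify that $rDD_t$ is of class $\cd$ from $0 \le rDD_t = 1 - M_t/\overline{M}_t \le 1 + M_t^-/m$ together with the class $\cd$ property of $(M_t^-)$; hence $rDD_t$ is of class $\csd$, and Lemma \ref{lemmasub}(4) yields an integrable limit $rDD_\infty$ with $rDD_t \to rDD_\infty$ almost surely and in $L^1$.

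Once $rDD_t$ is known to be of class $\csd$ with last zero $g$, Theorem \ref{thmrepL}(1) applied with $f \equiv 1$ gives \eqref{rDDrepL} directly. For \eqref{rDDdist}, I would invoke Lemma \ref{lemmatrans}(2): since $rDD_t = f(A_t) DD_t$ is of class $\cd$, the process $\tilde N_t = rDD_t - F(A_t)$ is a uniformly integrable martingale, so $\tilde N_T = \E{\tilde N_\infty \mid {\cal F}_T}$ for every stopping time $T$. Substituting $\tilde N_t = rDD_t - \log(\overline{M}_t) + \log(m)$ and cancelling the constant $\log(m)$ rearranges this identity into \eqref{rDDdist}. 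The computations are routine applications of the earlier machinery; the only points requiring care are the two domination estimates, namely $(m - M_t)^+ \le m^+ + M_t^-$ and $rDD_t \le 1 + M_t^-/m$, which transfer the class $\cd$ hypothesis on $(M_t^-)$ to $(N_t^+)$ and to $rDD_t$ respectively.
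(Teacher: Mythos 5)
Your proposal is correct and follows essentially the same route as the paper's (much terser) proof: verify that $DD_t$ satisfies condition (2) of Corollary \ref{correpL} with last zero $g$, and that $rDD_t = \overline{M}_t^{-1} DD_t$ is of class $\csd$ so that Theorem \ref{thmrepL} and Lemma \ref{lemmatrans} apply. The two domination estimates you single out are exactly the details the paper leaves implicit, and both are valid.
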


\begin{proof}
$DD_t$ is a process of class $\cs$ with $L = g$ that satisfies the assumptions of
Corollary \ref{correpL}. This shows \eqref{DDrepL}. Moreover, if $m > 0$, then
$rDD_t$ is a process of class $\csd$, and \eqref{rDDrepL}--\eqref{rDDdist} follow
from Theorem \ref{thmrepL} and Lemma \ref{lemmatrans}.
\end{proof}

In the following we study the situation where $(M_t)$ is stopped with a
stopping time of the form
$T_{\lambda} = \inf \crl{t : M_t \le \lambda(\overline{M}_t)}$ for a
Borel function $\lambda : [m, \infty) \to \mathbb{R}$
satisfying $\lambda(x) < x$ for all $x \ge m$.
Denote
\be \label{La}
g_{\lambda} = \sup \crl{t \le T_{\lambda} : M_t = \overline{M}_t},
\quad \Lambda(x) = \int_m^x \frac{dy}{y - \lambda(y)}
\ee
and notice that $\Lambda$ is a well-defined increasing function from
$[m,\infty]$ to $[0,\infty]$.

\begin{proposition} \label{proplambdastop}
Assume the function $1/(x - \lambda(x))$
is locally bounded on $[m,\infty)$ and
\be \label{condst}
\overline{M}_{T_{\lambda}} < \infty \quad \mbox{and} \quad
M_{T_{\lambda}} = \lambda(\overline{M}_{T_{\lambda}}).
\ee
Then $\Lambda(\infty) = \infty$, $g_{\lambda} < T_{\lambda}$,
$M_{T_{\lambda}} < \overline{M}_{g_{\lambda}} = \overline{M}_{T_{\lambda}} $ and
\be \label{repLDD}
\p[g_{\lambda} \le T \mid {\cal F}_T] =
\frac{\overline{M}_T - M_T}{\overline{M}_T
- \lambda(\overline{M}_T)}
\ee
for every stopping time $T \le T_{\lambda}$. Moreover, for all Borel functions
$h : [m,\infty) \to \mathbb{R}$ satisfying
$$
\int_m^{\infty} |h(y)| e^{- \Lambda(y)} d \Lambda(y) < \infty
$$
one has
\bea
\label{hedgingDD}
\E{h(\overline{M}_{T_{\lambda}}) \mid {\cal F}_T}
&=& h^{\Lambda}(m)
+ \int_0^T \frac{h^{\Lambda}(\overline{M}_u)
- h(\overline{M}_u)}{\overline{M}_u - \lambda(\overline{M_u})} dM_u\\
\label{hDD}
&=& \frac{h(\overline{M}_T)(\overline{M}_T - M_T) + h^{\Lambda}(\overline{M}_T)(M_T
- \lambda(\overline{M}_T))}{\overline{M}_T - \lambda(\overline{M_T})},
\eea
where
\be \label{hLa}
h^{\Lambda}(x) = e^{\Lambda(x)} \int_x^{\infty} h(y) e^{- \Lambda(y)} d\Lambda(y),
\quad x \ge m.
\ee
In particular,
\be \label{distbarM}
\p[\overline{M}_{T_{\lambda}} > x \mid {\cal F}_T]
= 1_{\crl{\overline{M}_T > x}} + 1_{\crl{\overline{M}_T \le x}}
\frac{M_T - \lambda(\overline{M}_T)}{\overline{M}_T
- \lambda(\overline{M_T})}
e^{\Lambda(\overline{M}_T) - \Lambda(x)} \quad \mbox{for } x \ge m.
\ee
\end{proposition}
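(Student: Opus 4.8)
The plan is to recognize this proposition as a direct application of Theorem \ref{thmfXg} to the non-negative process of class $\cs$ given by the drawdown $DD_t = \overline{M}_t - M_t$, suitably stopped at $T_\lambda$, with the right choice of transforming function $f$. The key observation is that in the decomposition $DD_t = (m - M_t) + (\overline{M}_t - m)$ the finite variation part is $A_t = \overline{M}_t - m$, so that $\overline{M}_t = A_t + m$ and $\overline{M}_t - \lambda(\overline{M}_t) = (A_t + m) - \lambda(A_t + m)$. This suggests defining $f(x) = 1/\brak{(x+m) - \lambda(x+m)}$ on $[0, \infty)$, so that $f(A_t) = 1/\brak{\overline{M}_t - \lambda(\overline{M}_t)}$ and hence $f(A_t) DD_t = (\overline{M}_t - M_t)/\brak{\overline{M}_t - \lambda(\overline{M}_t)}$. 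With $F(x) = \int_0^x f(y)\,dy$, the substitution $y = x + m$ shows $F(x) = \Lambda(x + m)$, matching the function $\Lambda$ defined in \eqref{La}.

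First I would verify the hypotheses of Theorem \ref{thmfXg} for the process $X_t = DD_{t \wedge T_\lambda}$ and this $f$. Local boundedness of $1/(x - \lambda(x))$ gives exactly that $f 1_{[0,a_n]}$ is bounded for a suitable increasing sequence $a_n$; here I would take $a = \infty$ so that the cutoff condition $f(x) = 0$ for $x \ge a$ is vacuous, or more carefully work on the stopped process where $A$ is bounded by $\overline{M}_{T_\lambda} - m$. The crucial analytic point is that the stopping condition \eqref{condst}, namely $M_{T_\lambda} = \lambda(\overline{M}_{T_\lambda})$ with $\overline{M}_{T_\lambda} < \infty$, forces $f(A_{T_\lambda}) DD_{T_\lambda} = \brak{\overline{M}_{T_\lambda} - M_{T_\lambda}}/\brak{\overline{M}_{T_\lambda} - \lambda(\overline{M}_{T_\lambda})} = 1$, so that after stopping, $f(A_t) X_t \to 1$ almost surely. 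I would also check that the stopped process $f(A_{t \wedge T_\lambda}) X_{t \wedge T_\lambda}$ is of class $\cd$, which follows since it is bounded between $0$ and $1$ before $T_\lambda$.

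The identities then transfer directly. Formula \eqref{fAXg} becomes $f(A_T) X_T = \p[L \le T \mid {\cal F}_T]$, and since the last zero $L$ of $DD_{t \wedge T_\lambda}$ is precisely $g_\lambda = \sup\crl{t \le T_\lambda : M_t = \overline{M}_t}$, this is exactly \eqref{repLDD}. Since $F(a) = \Lambda(\infty)$ must equal $\infty$ by part (2) of the theorem (as $A_\infty = \overline{M}_{T_\lambda} - m$ is genuinely random and finite), I obtain $\Lambda(\infty) = \infty$. The conditional law formula \eqref{distAg} translates into \eqref{distbarM} under the change of variable $\overline{M}_\infty = A_\infty + m$. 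For the representation of $\E{h(\overline{M}_{T_\lambda}) \mid {\cal F}_T}$, I would apply \eqref{hedgingAg} and \eqref{hAg} to the function $\tilde{h}(x) = h(x + m)$; the transform $\tilde{h}^F(x) = e^{F(x)} \int_x^\infty \tilde{h}(y) e^{-F(y)} dF(y)$ becomes $h^\Lambda(x+m)$ after substitution, yielding \eqref{hedgingDD} and \eqref{hDD}, with the stochastic integral against $N_t = m - M_t$ turning into an integral against $dM_u$ (up to sign).

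The main obstacle I anticipate is the bookkeeping around the stopping at $T_\lambda$: Theorem \ref{thmfXg} is stated for a process satisfying $f(A_t) X_t \to 1$ as $t \to \infty$, whereas here the convergence to $1$ happens at the finite stopping time $T_\lambda$. I would handle this by passing to the stopped process $X_t = DD_{t \wedge T_\lambda}$, noting that $A_t = \overline{M}_{t \wedge T_\lambda} - m$ is then constant after $T_\lambda$, so $f(A_t) X_t$ is genuinely constant equal to $1$ for $t \ge T_\lambda$ and the limit statement holds honestly. One must also confirm the strict inequalities $g_\lambda < T_\lambda$ and $M_{T_\lambda} < \overline{M}_{g_\lambda} = \overline{M}_{T_\lambda}$, which follow because $\lambda(x) < x$ guarantees $DD_{T_\lambda} > 0$, so the last visit to zero of the drawdown occurs strictly before $T_\lambda$, while the running maximum is frozen over $[g_\lambda, T_\lambda]$.
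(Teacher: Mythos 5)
Your proposal is correct and follows essentially the same route as the paper: the paper also passes to the stopped drawdown $X_t = \overline{M}_{t\wedge T_\lambda} - M_{t\wedge T_\lambda}$ with decomposition $(m-M_{t\wedge T_\lambda})+(\overline{M}_{t\wedge T_\lambda}-m)$, takes $f(x)=1/(x+m-\lambda(x+m))$ so that $F(x)=\Lambda(x+m)$, uses \eqref{condst} to get $f(A_t)X_t\to 1$, and reads off all the formulas from Theorem \ref{thmfX} applied to $\tilde h(x)=h(x+m)$ (your use of Theorem \ref{thmfXg} with $a=\infty$ is the same statement, since the paper invokes the locally bounded version \ref{thmfX} directly). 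The only cosmetic difference is that the paper rules out case (1) of the theorem simply because $X_0=0$ contradicts $f(0)X_0=1$, which is a slightly cleaner justification of $\Lambda(\infty)=\infty$ than your appeal to $A_\infty$ being nondegenerate.
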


\begin{proof}
$X_t = \overline{M}_{t \wedge T_{\lambda}} - M_{t \wedge T_{\lambda}}$ is a non-negative
process of class $\cs$ starting at $0$ with
decomposition $(m - M_{t \wedge T_{\lambda}}) +
(\overline{M}_{t \wedge T_{\lambda}} - m)$, and
$$
f(x) = \frac{1}{x + m - \lambda(x+m)}
$$
is a non-negative locally bounded Borel function on $\mathbb{R}_+$ with
$F(x) = \int_0^x f(y) dy = \Lambda(x+m)$. By Lemma \ref{lemmatrans}, the process
$$
f(\overline{M}_{t \wedge T_{\lambda}} - m) X_t
= \frac{\overline{M}_{t \wedge T_{\lambda}}
- M_{t \wedge T_{\lambda}}}{\overline{M}_{t \wedge T_{\lambda}}
- \lambda(\overline{M}_{t \wedge T_{\lambda}})}
$$
is of class $\cs$. Moreover, it follows from condition \eqref{condst} that it
takes values in $[0,1]$ and converges to $1$ almost surely.
Since $X_0 = 0$, one obtains from Theorem \ref{thmfX} that
$\Lambda(\infty) = \int_0^{\infty} f(x) dx = \infty$,
$g_{\lambda} < T_{\lambda}$, $M_{T_{\lambda}} < \overline{M}_{g_{\lambda}}
= \overline{M}_{T_{\lambda}}$
and
$$
\p[g_{\lambda} \le T \mid {\cal F}_T] =
\frac{\overline{M}_T - M_T}{\overline{M}_T
- \lambda(\overline{M}_T)}
$$
for all stopping times $T \le T_{\lambda}$. Formulas \eqref{hedgingDD}--\eqref{distbarM}
follow from Theorem \ref{thmfX} applied to the function
$$
\tilde{h}(x) = h (x+m), \quad x \ge 0.
$$
\end{proof}

\begin{Remark} \label{remDoobmax}
If $(M_t)$ is a non-negative local martingale starting at $1$ such that
$(\overline{M}_t)$ is continuous and $M_t \to 0$ almost surely, then formula
\eqref{distbarM} with $T=0$ and $\lambda \equiv 0$ yields that
$1/ \overline{M}_{\infty}$ is uniformly distributed on
the interval $(0,1)$. This is Doob's maximal identity, which has been studied in depth by
Mansuy and Yor \cite{MY} and Nikeghbali and Yor \cite{NY}.
\end{Remark}

The next proposition gives sufficient conditions for assumption
\eqref{condst} to hold.

\begin{proposition} \label{propcond}
Assume $(M_t)$ is continuous.
Then both of the following two conditions imply condition \eqref{condst}.
\begin{itemize}
\item[{\rm (1)}]
$m > 0$, $M_t \to 0$ almost surely and $\lambda(x) \ge 0$ for all $x \ge m$
\item[{\rm (2)}]
$\overline{M}_{\infty} = \Lambda(\infty) = \infty$
\end{itemize}
\end{proposition}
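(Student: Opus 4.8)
The plan is to phrase everything in terms of the normalized drawdown
$Z_t := (\overline{M}_t - M_t)/(\overline{M}_t - \lambda(\overline{M}_t))$, which is precisely the process $f(A_t)X_t$ appearing in the proof of Proposition \ref{proplambdastop}, built from $X_t = \overline{M}_t - M_t$ (of class $\cs$, with $A_t = \overline{M}_t - m$ and $N_t = m - M_t$) and $f(x) = 1/(x+m-\lambda(x+m))$, so that $\int_0^x f = \Lambda(x+m)$ and $T_\lambda = \inf\crl{t : Z_t \ge 1}$ with $Z_0 = 0$. Two observations do most of the work. First, $Z$ is continuous: even though $\lambda\circ\overline{M}$ may jump, $\overline{M}$ can only move on the set $\crl{M = \overline{M}} = \crl{Z=0}$, where the numerator $\overline{M}_t - M_t$ vanishes, so by the defining property (3) of class $\cs$ applied to $X$ the product never jumps. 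Second, since $\overline{M}$ is continuous, $\overline{M}_{T_\lambda} < \infty$ holds automatically on $\crl{T_\lambda < \infty}$, and there continuity of $Z$ forces $Z_{T_\lambda} = 1$, i.e. $M_{T_\lambda} = \lambda(\overline{M}_{T_\lambda})$. Thus \eqref{condst} holds on $\crl{T_\lambda < \infty}$ under either hypothesis, and it only remains to treat $\crl{T_\lambda = \infty}$.

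Under condition (2) I would show this event is null. Here $A_\infty = \overline{M}_\infty - m = \infty$, and $X$ is non-negative of class $\cs$ with no positive jumps (being continuous), so Corollary \ref{corfiniteR} applies. Taking $T = 0$, and using $f(0)X_0 = Z_0 = 0$ together with $\int_0^\infty f(x)\,dx = \int_m^\infty dy/(y-\lambda(y)) = \Lambda(\infty) = \infty$, it gives $\p[Z_t < 1 \text{ for all } t] = (1-Z_0)^+ e^{-\Lambda(\infty)} = 0$. Hence $T_\lambda < \infty$ almost surely, and \eqref{condst} follows from the two observations above.

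Under condition (1) I would first note that the stopped process $M_{\cdot\wedge T_\lambda}$ is a non-negative continuous local martingale: for $t < T_\lambda$ one has $M_t > \lambda(\overline{M}_t) \ge 0$, and $M_{T_\lambda} = \lambda(\overline{M}_{T_\lambda}) \ge 0$ on $\crl{T_\lambda<\infty}$ by the first paragraph. The maximal inequality $\p[\sup_t M_{t\wedge T_\lambda} \ge a] \le m/a$ then yields $\overline{M}_{T_\lambda} = \sup_t M_{t\wedge T_\lambda} < \infty$ almost surely, which is the first half of \eqref{condst}. For the second half only $\crl{T_\lambda = \infty}$ is left: there $M_t \to 0$ while $\overline{M}_\infty \ge m > 0$, so $\overline{M}$ is eventually constant equal to $\overline{M}_\infty$; feeding this into $M_t > \lambda(\overline{M}_t) = \lambda(\overline{M}_\infty)$ and letting $t\to\infty$ gives $\lambda(\overline{M}_\infty) \le 0$, and with $\lambda \ge 0$ we obtain $M_{T_\lambda} = \lim_t M_t = 0 = \lambda(\overline{M}_\infty) = \lambda(\overline{M}_{T_\lambda})$.

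The one genuinely delicate point is the boundary equality $M_{T_\lambda} = \lambda(\overline{M}_{T_\lambda})$ for merely Borel $\lambda$: the naive object $M_t - \lambda(\overline{M}_t)$ is discontinuous, so the hitting value cannot be read off from it directly. Passing to the normalized drawdown $Z$ removes this obstruction, its continuity being exactly the class-$\cs$ feature that $A$ moves only where $X = 0$. Once $Z$ is in hand, condition (2) is a one-line application of Corollary \ref{corfiniteR} — the two hypotheses $\overline{M}_\infty = \infty$ and $\Lambda(\infty) = \infty$ being precisely what forces the escape probability $(1-Z_0)^+e^{-\Lambda(\infty)}$ to vanish — while condition (1) needs only the extra bookkeeping for the possibility $T_\lambda = \infty$.
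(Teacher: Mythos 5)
Your proof is correct and follows essentially the same route as the paper's: under condition (2) it is the same application of Corollary \ref{corfiniteR} with $T=0$ forcing $\p[Z_t<1 \mbox{ for all } t]=e^{-\Lambda(\infty)}=0$, and under condition (1) the same use of path-boundedness of a continuous process converging to $0$ together with the lower bound $(\overline{M}_t-M_t)/\overline{M}_t\to 1$ on the normalized drawdown. The only (welcome) difference is that you make explicit, via the continuity of $Z_t=f(A_t)X_t$ --- which rests on the local boundedness of $1/(x-\lambda(x))$ assumed in Proposition \ref{proplambdastop} and on the fact that $A$ increases only where $X=0$ --- why the boundary equality $M_{T_{\lambda}}=\lambda(\overline{M}_{T_{\lambda}})$ holds at a finite $T_{\lambda}$ for merely Borel $\lambda$, a point the paper's proof leaves implicit.
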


\begin{proof}
Under condition (1) one has $\overline{M}_{T_{\lambda}} \le
\overline{M}_{\infty} < \infty$ and
$$
\frac{\overline{M}_t - M_t}{\overline{M}_t - \lambda (\overline{M}_t)}
\ge \frac{\overline{M}_t - M_t}{\overline{M}_t} \to 1 \mbox{ almost surely,}
$$
which shows that $M_{T_{\lambda}} = \lambda(\overline{M}_{T_{\lambda}})$.

If condition (2) holds, then $\int_0^{\infty} f(x) dx = \infty$ for the function
$$
f(x) = \frac{1}{x + m - \lambda(x + m)}.
$$
Since $X_t = \overline{M}_t - M_t$ is
a non-negative continuous process of class $\cs$ with decomposition
$(m - M_t) + (\overline{M}_t - m)$,
it follows from Corollary \ref{corfiniteR} that
$\p[f(\overline{M}_t-m) X_t <1 \mbox{ for all t }] = 0$. This implies
$T_{\lambda} < \infty$, $\overline{M}_{T_{\lambda}} < \infty$ and
$M_{T_{\lambda}} = \lambda(\overline{M}_{T_{\lambda}})$.
\end{proof}

Several authors have studied the distribution of the maximum drawdown
$\sup_{0 \le t \le T} DD_t$ in the case where $T$ is a constant and
$(M_t)$ a Brownian motion (with or without drift); see, for instance,
Berger and Whitt \cite{BW}, Douady et al. \cite{DSY}, Graversen and Shiryaev \cite{GS},
Magdon-Ismail et al. \cite{MAPA}. With the methods developed here we can
derive conditional distributions of maximum
drawdowns $\sup_{T \le t \le R} DD_t$ and maximum relative drawdowns
$\sup_{T \le t \le R} rDD_t$ when $T \le R$ are suitable stopping times and
$(M_t)$ is a continuous local martingale. In the next subsection we will
generalize these results to the diffusion case.

\begin{proposition} \label{propmdd1}
Assume $(M_t)$ is continuous with $m > 0$ and $M_t \to 0$ almost surely.
Let $T < \infty$ be a stopping time and $K$ an
${\cal F}_T$-measurable random variable such that $0 \le K < M_T$.
Denote $T_K = \inf \crl{t \ge T : M_t = K}$. Then one has for all $x \ge 0$,
\be \label{mdd1}
\begin{split}
\p \edg{\sup_{t \in [T, T_K] \cap \mathbb{R}_+} DD_t > x \mid {\cal F}_T}
= 1_{\crl{\overline{M}_T - K > x}} + 1_{\crl{\overline{M}_T - K \le x}}
\frac{M_T - K}{x}.
\end{split}
\ee
If in addition, $m > 0$, then
\be \label{mrdd1}
\begin{split}
\p \edg{\sup_{t \in [T,T_K] \cap \mathbb{R}_+} rDD_t > x \mid {\cal F}_T}
= 1_{\crl{1 - K/\overline{M}_T > x}} +
1_{\crl{1 - K/\overline{M}_T \le x < 1}} \brak{\frac{M_T - K}{K}} \brak{\frac{1-x}{x}}.
\end{split}
\ee
\end{proposition}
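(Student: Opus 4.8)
The plan is to reduce the path-dependent maximum to a single threshold crossing of the running maximum, and then to an elementary two-level exit problem for the martingale $(M_t)$ itself. The starting point is a pathwise identity. Since $(M_t)$ is continuous with $M_T > K$, one has $M_t > K$ for $t \in [T,T_K)$ and $M_{T_K}=K$, so the terminal drawdown is $DD_{T_K}=\overline{M}_{T_K}-K$, while for any $t \le T_K$, using that $(\overline{M}_t)$ is nondecreasing, $DD_t=\overline{M}_t-M_t \le \overline{M}_{T_K}-M_t \le \overline{M}_{T_K}-K$. Hence $\crl{\sup_{t\in[T,T_K]}DD_t>x}$ forces $\overline{M}_{T_K}-K>x$, while conversely $\overline{M}_{T_K}-K>x$ already gives $DD_{T_K}>x$. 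I would therefore first establish the exact event identity $\crl{\sup_{t\in[T,T_K]}DD_t>x}=\crl{\overline{M}_{T_K}>K+x}$, and the analogous $\crl{\sup_{t\in[T,T_K]}rDD_t>x}=\crl{\overline{M}_{T_K}>K/(1-x)}$ for $0<x<1$, where I use $rDD_{T_K}=1-K/\overline{M}_{T_K}$ together with $M_t\ge K$ on $[T,T_K]$.

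With the threshold $c$ set equal to $K+x$ (resp.\ $K/(1-x)$, for $K>0$), the task becomes to compute $\pr{\overline{M}_{T_K}>c\mid \mathcal{F}_T}$. On $\crl{\overline{M}_T>c}$, that is on $\crl{\overline{M}_T-K>x}$ (resp.\ $\crl{1-K/\overline{M}_T>x}$), the event is certain because $(\overline{M}_t)$ is nondecreasing; this produces the indicator terms in \eqref{mdd1} and \eqref{mrdd1}. On the complementary set $\crl{\overline{M}_T\le c}$ I would note that $\overline{M}_{T_K}>c$ holds if and only if $(M_t)$ attains the level $c$ before it reaches $K$, i.e.\ $T_c<T_K$ with $T_c=\inf\crl{t\ge T:M_t=c}$ (the two times cannot coincide since $M_{T_K}=K\neq c$).

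The second main step is to evaluate this exit probability by optional stopping. Setting $\rho=\inf\crl{t\ge T:M_t\in\crl{K,c}}$, the stopped process $M^{\rho}$ takes values in the bounded interval $[K,c]$, so although $(M_t)$ is only a possibly strict local martingale, $M^{\rho}$ is a uniformly integrable martingale and the conditional optional stopping theorem applies, giving $M_T=\E{M_{\rho}\mid\mathcal{F}_T}=c\,p+K(1-p)$ with $p=\pr{M_{\rho}=c\mid\mathcal{F}_T}$. Solving yields $p=(M_T-K)/(c-K)$, which equals $(M_T-K)/x$ for the drawdown and $\brak{\tfrac{M_T-K}{K}}\brak{\tfrac{1-x}{x}}$ for the relative drawdown, matching \eqref{mdd1} and \eqref{mrdd1}. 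The one case needing extra care is $K=0$ (admissible for the drawdown), where $T_K$ may be infinite; there I would invoke $M_t\to0$ and the boundedness of $M^{\rho}$ to pass to the limit by dominated convergence, recovering the same identity --- this is precisely Doob's maximal identity, cf.\ Remark \ref{remDoobmax}.

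The conceptual crux, and the step I expect to be least routine to get exactly right, is the pathwise identity of the first paragraph: recognising that, because $(M_t)$ cannot drop below $K$ before $T_K$ and its terminal drawdown is measured from the frozen terminal value $K$, a large maximum drawdown on $[T,T_K]$ is \emph{equivalent} to a single crossing of a fixed level by the running maximum. This collapses an a priori genuinely path-dependent quantity into a one-dimensional boundary-hitting event, and thereby sidesteps the two real difficulties --- that $(\overline{M}_t)$ is unbounded and that $(M_t)$ is a strict local martingale. Once the identity is in place, the rest is an elementary exit computation and does not require the heavier machinery of Sections \ref{secPrel}--\ref{secRep}.
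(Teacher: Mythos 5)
Your argument is correct, and it reaches \eqref{mdd1} and \eqref{mrdd1} by a genuinely more elementary route than the paper. The two proofs share the first step: the pathwise identity $\sup_{t\in[T,T_K]}DD_t=\overline{M}_{T_K}-K$ (and its relative analogue), which the paper also asserts without further comment. From there the paper reduces to $T=0$ and constant $K$ by a time shift and invokes Proposition \ref{proplambdastop} with $\lambda\equiv K$ (checking its hypotheses via Proposition \ref{propcond}), so that \eqref{mdd1} and \eqref{mrdd1} are read off from the general law \eqref{distbarM} of $\overline{M}_{T_\lambda}$ --- a formula that ultimately rests on Theorem \ref{thmfX} and the class $\cs$ machinery. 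You instead compute $\pr{\overline{M}_{T_K}>c\mid{\cal F}_T}$ directly as a two-barrier exit probability: $(M_{(T+s)\wedge\rho})_{s\ge0}$ is a bounded, hence uniformly integrable, martingale in $({\cal F}_{T+s})$, and conditional optional stopping yields $(M_T-K)/(c-K)$, with the case $K=0$ (where $\rho$ may be infinite) handled by bounded convergence, recovering Doob's maximal identity as in Remark \ref{remDoobmax}. Your version is self-contained, needs no time-shift reduction since it works directly with a general stopping time $T$ and an ${\cal F}_T$-measurable $K$, and makes transparent why $\lambda\equiv K$ is special: the exponential factor in \eqref{distbarM} collapses and the answer is the classical gambler's-ruin ratio. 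What it does not give is the general-$\lambda$ statement of Proposition \ref{proplambdastop}, which the paper obtains once and reuses throughout Sections \ref{secDD} and \ref{secAppl}. One small point to tighten: $\crl{\overline{M}_{T_K}>c}$ and $\crl{T_c<T_K}$ agree only up to a null set, since a priori $M$ could touch $c$ without exceeding it before $T_K$; the cleanest fix is to observe $\crl{\overline{M}_{T_K}>c}\subset\crl{T_c<T_K}\subset\crl{\overline{M}_{T_K}\ge c}$ and that the resulting conditional law of $\overline{M}_{T_K}$ is atomless, so the distinction is immaterial.
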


\begin{proof}
Let us first assume $T = 0$ and $K$ is a constant.
Then, by Proposition \ref{propcond}, $(M_t)$ with $\lambda \equiv K$ fulfills the
assumptions of Proposition \ref{proplambdastop}. Moreover,
$$
\sup_{t \in [T,T_K] \cap \mathbb{R}_+} DD_t = \overline{M}_{T_K} -K \quad \mbox{and} \quad
\sup_{t \in [T,T_K] \cap \mathbb{R}_+} rDD_t = 1 - K/\overline{M}_{T_K}.
$$
So \eqref{mdd1} and \eqref{mrdd1} can be deduced from formula \eqref{distbarM}.
In the general case, the proposition follows
by considering the process $\tilde{M}_t = M_{T + t}$ in the filtration
$\tilde{\cal F}_t = {\cal F}_{T+t}$ and conditioning on ${\cal F}_T$.
\end{proof}

\begin{proposition} \label{propmdd2}
Assume $(M_t)$ is continuous and $\overline{M}_{\infty} = \infty$.
Let $T < \infty$ be a stopping time and $K$ a $[m,\infty]$-valued ${\cal F}_T$-measurable
random variable such that $\overline{M}_T < K \le \overline{M}_{\infty}$.
Denote $$T_K = \inf \crl{t \ge T : M_t = K}$$. Then one has for all $x \ge 0$,
\be \label{probMl}
\p[M_t \ge \lambda(\overline{M}_t)
\mbox{ for all } t \in [T, T_K] \cap \mathbb{R}_+ \mid {\cal F}_T]
= \brak{\frac{M_T - \lambda(\overline{M}_T)}{\overline{M}_T
- \lambda(\overline{M}_T)}}^+ e^{\Lambda(\overline{M}_T) - \Lambda(K)}.
\ee
\end{proposition}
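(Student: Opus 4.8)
The plan is to recognize the statement as a direct instance of Corollary~\ref{corfiniteR} applied to the drawdown process. I would set $X_t = \overline{M}_t - M_t$. As already observed in the proof of Proposition~\ref{proplambdastop}, this is a non-negative process of class $\cs$ with decomposition $N_t = m - M_t$ and $A_t = \overline{M}_t - m$; since $(M_t)$ is continuous, $(X_t)$ is continuous and in particular has no positive jumps, and the hypothesis $\overline{M}_{\infty} = \infty$ gives $A_{\infty} = \infty$. I would then use the same auxiliary function $f(x) = 1/(x + m - \lambda(x+m))$, which is non-negative and locally bounded on $\mathbb{R}_+$ and satisfies $F(x) = \int_0^x f(y)\,dy = \Lambda(x+m)$, so that $f(A_t)X_t = (\overline{M}_t - M_t)/(\overline{M}_t - \lambda(\overline{M}_t))$.

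Next I would translate both the event and the stopping time into the language of Corollary~\ref{corfiniteR}. Because $\overline{M}_t - \lambda(\overline{M}_t) > 0$, the inequality $M_t \ge \lambda(\overline{M}_t)$ is equivalent to $f(A_t)X_t \le 1$, so the event in question is exactly $\crl{f(A_t)X_t \le 1 \mbox{ for all } t \in [T, T_K]}$. For the stopping time, since $(M_t)$ is continuous and $M_T \le \overline{M}_T < K$, the first time after $T$ that $M$ reaches $K$ coincides with the first time $\overline{M}_t \ge K$, i.e.\ with $\inf \crl{t : A_t \ge K - m}$; moreover $A_T = \overline{M}_T - m < K - m$, so $K - m$ is an $\mathcal{F}_T$-measurable level strictly above $A_T$. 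This places us in case~(1) of Corollary~\ref{corfiniteR} with threshold $K - m$, and at $T_K$ one has $\overline{M}_{T_K} = K = M_{T_K}$, hence $X_{T_K} = 0$, consistent with the corollary.

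Applying formula~\eqref{hp2} then yields the claim after a short computation: the first factor is $(1 - f(A_T)X_T)^+ = \brak{(M_T - \lambda(\overline{M}_T))/(\overline{M}_T - \lambda(\overline{M}_T))}^+$, and the exponent is $-\int_{A_T}^{K-m} f(x)\,dx = -(F(K-m) - F(A_T)) = \Lambda(\overline{M}_T) - \Lambda(K)$, which gives exactly \eqref{probMl}. The borderline case $K = \infty$ (admissible since $\overline{M}_{\infty} = \infty$) is handled identically, using \eqref{hp1} in place of \eqref{hp2} with $T_K = \infty$ and the event taken over all $t \ge T$.

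The arithmetic is routine; the only point demanding care is the translation step, namely verifying that the first passage of $M$ to the level $K$ equals the first time the additive part $A_t = \overline{M}_t - m$ reaches $K - m$, and that at this instant $X_{T_K} = 0$ so that \eqref{hp2} applies cleanly. This is precisely where the continuity of $(M_t)$ together with the hypothesis $\overline{M}_T < K$ enter.
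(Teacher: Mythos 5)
Your proposal is correct and follows essentially the same route as the paper's own proof: both take $X_t=\overline{M}_t-M_t$ with decomposition $(m-M_t)+(\overline{M}_t-m)$, the same auxiliary function $f(x)=1/(x+m-\lambda(x+m))$ with $F(x)=\Lambda(x+m)$, and reduce the claim to Corollary \ref{corfiniteR} via the equivalence of $M_t\ge\lambda(\overline{M}_t)$ with $f(A_t)X_t\le 1$ and the identification of $T_K$ with $\inf\crl{t:A_t\ge K-m}$. The only (harmless) quibble is that you assert $f$ is locally bounded, which is neither assumed in this proposition nor needed by Corollary \ref{corfiniteR}, which only requires $f$ to be a non-negative Borel function.
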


\begin{proof}
$X_t = \overline{M}_t -M_t$ is a non-negative process of class $\cs$
with decomposition $(m-M_t) + (\overline{M}_t - m)$ and
$$
f(x) = \frac{1}{x + m - \lambda(x+m)}
$$
a Borel function from $\mathbb{R}_+$ to $\mathbb{R}_+$.
Note that $M_t \ge \lambda(\overline{M}_t)$ is equivalent to
$f(\overline{M_t} - m)X_t \le 1$ and
$T_k = \inf \crl{\overline{M}_t - m = K - m}$. Therefore, it follows
from Corollary \ref{corfiniteR} that
\beas
\p[M_t \ge \lambda(\overline{M}_t)
\mbox{ for all } t \in [T, T_K] \cap \mathbb{R}_+ \mid {\cal F}_T]
&=& \brak{\frac{M_T - \lambda(\overline{M_T})}{M_T
- \lambda(\overline{M}_T)}}^+
\exp \brak{- \int_{\overline{M}_T - m}^{K - m} f(x) dx}\\
&=& \brak{\frac{M_T - \lambda(\overline{M}_T)}{\overline{M}_T
- \lambda(\overline{M}_T)}}^+ \exp \brak{\Lambda(\overline{M}_T) - \Lambda(K)}.
\eeas
\end{proof}

\begin{corollary} \label{cormdd2}
Under the assumptions of Proposition \ref{propmdd2} one has for all $x \ge 0$,
\be \label{mdd2}
\p \edg{\sup_{t \in [T,T_K] \cap \mathbb{R}_+} DD_t \le x \mid {\cal F}_T}
= 1_{\crl{x >0}} \brak{1 - \frac{DD_T}{x}}^+
\exp \brak{\frac{\overline{M}_T - K}{x}},
\ee
and, provided that $m > 0$,
\be \label{mrdd2}
\p \edg{\sup_{t \in [T,T_K] \cap \mathbb{R}_+} rDD_t \le x \mid {\cal F}_T}
= 1_{\crl{x >0}} \brak{1 - \frac{rDD_T}{x}}^+ \brak{\frac{\overline{M}_T}{K}}^{1/x}.
\ee
\end{corollary}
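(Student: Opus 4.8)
The plan is to deduce both identities directly from Proposition~\ref{propmdd2}, choosing in each case the function $\lambda$ so that the event $\{M_t \ge \lambda(\overline{M}_t) \mbox{ for all } t \in [T,T_K]\}$ appearing in \eqref{probMl} coincides with the event that the drawdown, respectively the relative drawdown, stays below $x$ throughout $[T,T_K]$. Before doing so I would dispose of the threshold $x=0$: since $(M_t)$ is a continuous local martingale, the event $\{\sup_{t\in[T,T_K]} DD_t = 0\}$ equals $\{M_t=\overline{M}_t \mbox{ for all } t\in[T,T_K]\}$, on which $(M_t)$ is nondecreasing, hence of finite variation and therefore constant, on a nondegenerate interval; this is incompatible with $M_T \le \overline{M}_T < K = M_{T_K}$, so the left-hand sides of \eqref{mdd2} and \eqref{mrdd2} vanish when $x=0$, matching the factor $1_{\{x>0\}}$.

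For \eqref{mdd2} with $x>0$ I would take $\lambda(y)=y-x$. This is an affine, hence Borel, function on $[m,\infty)$ satisfying $\lambda(y)<y$, and $1/(y-\lambda(y))=1/x$ is (globally, hence locally) bounded, so the hypotheses of Proposition~\ref{propmdd2} hold. The condition $M_t\ge\lambda(\overline{M}_t)$ is precisely $DD_t=\overline{M}_t-M_t\le x$, so the event in \eqref{probMl} is exactly $\{\sup_{t\in[T,T_K]} DD_t\le x\}$. It then remains to evaluate the two ingredients of \eqref{probMl}: from $\Lambda(y)=\int_m^y dz/x=(y-m)/x$ one gets $\Lambda(\overline{M}_T)-\Lambda(K)=(\overline{M}_T-K)/x$, while $(M_T-\lambda(\overline{M}_T))/(\overline{M}_T-\lambda(\overline{M}_T))=(x-DD_T)/x=1-DD_T/x$. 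Substituting these into \eqref{probMl} gives \eqref{mdd2}.

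For \eqref{mrdd2} I would assume $m>0$, so that $\overline{M}_t\ge m>0$ and $rDD_t=DD_t/\overline{M}_t$ is well defined, and argue identically with the linear choice $\lambda(y)=(1-x)y$ for $x>0$. Here $\lambda(y)<y$ for $y\ge m>0$ and $1/(y-\lambda(y))=1/(xy)\le 1/(xm)$ is bounded on $[m,\infty)$, so Proposition~\ref{propmdd2} again applies; now $M_t\ge\lambda(\overline{M}_t)=(1-x)\overline{M}_t$ is equivalent to $rDD_t=1-M_t/\overline{M}_t\le x$. Computing $\Lambda(y)=\int_m^y dz/(xz)=x^{-1}\log(y/m)$ yields $\exp(\Lambda(\overline{M}_T)-\Lambda(K))=(\overline{M}_T/K)^{1/x}$, and $(M_T-(1-x)\overline{M}_T)/(\overline{M}_T-(1-x)\overline{M}_T)=1-DD_T/(x\overline{M}_T)=1-rDD_T/x$; together these produce \eqref{mrdd2}. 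The computations are routine and there is no real obstacle; the only points needing care are checking the local boundedness and the strict inequality $\lambda(y)<y$ for each $\lambda$ (immediate for the affine choice, and using $m>0$ for the linear one) and handling the degenerate threshold $x=0$, which is absorbed into the prefactor $1_{\{x>0\}}$.
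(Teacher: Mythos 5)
Your proof is correct and follows essentially the same route as the paper: for $x>0$ both apply Proposition \ref{propmdd2} with $\lambda(y)=y-x$ for \eqref{mdd2} and $\lambda(y)=(1-x)y$ for \eqref{mrdd2}, and your computations of $\Lambda$ and of the prefactor agree with the paper's. The only (minor) divergence is the case $x=0$: the paper lets $x\downarrow 0$ in the right-hand sides, which tend to $0$ since $\overline{M}_T<K$, and uses monotone continuity of the conditional probability, whereas you argue pathwise that a continuous local martingale stuck to its running maximum on $[T,T_K]$ would be constant there, contradicting $M_{T_K}=K>\overline{M}_T$; both arguments are valid.
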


\begin{proof}
First assume $x > 0$. Then formula \eqref{mdd2} follows from
Proposition \ref{propmdd2} applied to the function $\lambda(y) = y-x$ and
\eqref{mrdd2} is obtained by applying Proposition \ref{propmdd2}
with $\lambda(y) = (1-x)y$. Since $\overline{M}_T < K$, one has
$$
\exp \brak{\frac{\overline{M}_T - K}{x}} \to 0 \quad \mbox{and} \quad
\brak{\frac{\overline{M}_T}{K}}^{1/x} \to 0 \quad
\mbox{almost surely for } x \downarrow 0.
$$
This shows that
$$
\p \edg{\sup_{t \in [T,T_K] \cap \mathbb{R}_+} DD_t = 0 \mid {\cal F}_T}
= \p \edg{\sup_{t \in [T,T_K] \cap \mathbb{R}_+} rDD_t = 0 \mid {\cal F}_T} = 0.
$$
\end{proof}

\subsection{The diffusion case}
\label{subsecdiff}

The results of Subsection \ref{subsecddm} can be extended to
stochastic processes that can be turned into local martingales through
a strictly increasing continuous transformation. To do that we here
consider a stochastic process $(Y_t)$ taking values in an interval
$I \subset \mathbb{R}$ which starts at a constant
$y_0 \in I$ such that the supremum process $(\overline{Y}_t)$ is continuous and
there exists a strictly increasing continuous function $s : I \to \mathbb{R}$
making $s(Y_t)$ a local martingale. Our main example is a diffusion of the form
\be \label{SDEY}
dY_t = \mu(Y_t) dt + \sigma(Y_t) dB_t, \quad Y_0 = y_0 \in I,
\ee
where $(B_t)$ is a Brownian motion and $\mu, \sigma: I \to \mathbb{R}$ are
deterministic functions such that
$$
\gamma(x) = 2 \int_{y_0}^x \frac{\mu(y)}{\sigma^2(y)} dy \quad \mbox{and}
\quad \int_{y_0}^x e^{- \gamma(y)} dy \quad
\mbox{are finite for all } x \in I.
$$
Then $s$ can be chosen as $s(x) = c + d \int_{y_0}^x e^{- \gamma(y)} dy$
for arbitrary constants $c \in \mathbb{R}$ and $d > 0$.
For instance, if $Y_t = B_t + b t$ for $b \in \mathbb{R} \setminus \crl{0}$, then
$I = \mathbb{R}$ and $s$ can be chosen as $s(x) = - {\rm sign}(b) e^{-2 b x}$.
Or if $(Y_t)$ is a Bessel process of dimension $\delta = 2(1-\nu) > 2$
starting at $y_0 > 0$, then one can choose $I = (0,\infty)$ and
$s(x) = - x^{2 \nu}$.

Denote the drawdown process $\overline{Y}_t - Y_t$ by $DD_t$ and if
$y_0 > 0$, the relative drawdown $DD_t/\overline{Y}_t$ by $rDD_t$.
As in Subsection \ref{subsecddm} we consider a stopping time of the form
$$
T_{\lambda} = \inf \crl{t : Y_t \le \lambda(\overline{Y}_t)}
$$
for a Borel function $\lambda$. But this time we assume that $\lambda$ maps
$[y_0,\infty) \cap I$ to the closure $\bar{I}$ of $I$ in $[-\infty,\infty]$
such that $\lambda(x) < x$ for all $x \in [y_0, \sup I)$.
Extend $s$ continuously to $s : \bar{I} \to [-\infty, \infty]$ and denote
\be \label{LaY}
g_{\lambda} = \sup \crl{t \le T_{\lambda} : Y_t = \overline{Y}_t}
\quad \mbox{and} \quad
\Lambda(x) = \int_{y_0}^x \frac{ds(y)}{s(y) - s \circ \lambda(y)}.
\ee
Then $\Lambda$ is a well-defined increasing function from $[y_0, \sup I]$ to $[0,\infty]$.
The following result generalizes Proposition \ref{proplambdastop}:

\begin{proposition} \label{proplambdastopY}
Let $(a_n)_{n \in \mathbb{N}}$ be an increasing sequence in
$(y_0,\sup I)$ and $(\varepsilon_n)_{n \in \mathbb{N}}$
a decreasing sequence in $(0,\infty)$ such that
$\lambda(x) \le x - \varepsilon_n$ for $y_0 \le x \le a_n$.
Denote $a = \lim_{n \to \infty} a_n \in (y_0, \sup I]$ and assume that
\be \label{condYTl}
\overline{Y}_{T_{\lambda}} < a \quad \mbox{and} \quad
Y_{T_{\lambda}} = \lambda(\overline{Y}_{T_{\lambda}}).
\ee
Then $\Lambda(a) = \infty$, $g_{\lambda} < T_{\lambda}$,
$Y_{T_{\lambda}} < \overline{Y}_{g_{\lambda}} = \overline{Y}_{T_{\lambda}} $ and
\be \label{repLDDY}
\p[g_{\lambda} \le T \mid {\cal F}_T] =
\frac{s(\overline{Y}_T) - s(Y_T)}{s(\overline{Y}_T)
- s \circ \lambda(\overline{Y}_T)}
\ee
for every stopping time $T \le T_{\lambda}$. Moreover, for all Borel functions
$h : [y_0,a) \to \mathbb{R}$ satisfying
$$
\int_{y_0}^a |h(y)| e^{- \Lambda(y)} d \Lambda(y) < \infty
$$
one has
\bea
\label{hedgingDDY}
\E{h(\overline{Y}_{T_{\lambda}}) \mid {\cal F}_T}
&=& h^{\Lambda}(y_0)
+ \int_0^T \frac{h^{\Lambda}(\overline{Y}_u)
- h(\overline{Y}_u)}{s(\overline{Y}_u) - s \circ \lambda(\overline{Y_u})} ds(Y_u)\\
\label{hDDY}
&=& \frac{h(\overline{Y}_T)[s(\overline{Y}_T)
- s(Y_T)] + h^{\Lambda}(\overline{Y}_T)[s(Y_T)
- s \circ \lambda(\overline{Y}_T)]}{s(\overline{Y}_T) - s \circ \lambda(\overline{Y_T})},
\eea
where
$$
h^{\Lambda}(x) = e^{\Lambda(x)} \int_x^a h(y) e^{- \Lambda(y)} d\Lambda(y),
\quad x \ge y_0.
$$
In particular,
\be \label{distbarY}
\p[\overline{Y}_{T_{\lambda}} > x \mid {\cal F}_T]
= 1_{\crl{\overline{Y}_T > x}} + 1_{\crl{\overline{Y}_T \le x}}
\frac{s(Y_T) - s \circ \lambda(\overline{Y}_T)}{s(\overline{Y}_T)
- s \circ \lambda(\overline{Y_T})}
e^{\Lambda(\overline{Y}_T) - \Lambda(x)} \quad \mbox{for } x \ge y_0.
\ee
\end{proposition}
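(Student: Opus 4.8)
The plan is to reduce the statement to Theorem~\ref{thmfXg}, exactly as Proposition~\ref{proplambdastop} was reduced to Theorem~\ref{thmfX}, the only new ingredient being the strictly increasing change of scale $s$. Since $s$ is continuous and increasing and $(\overline Y_t)$ is continuous, one has $s(\overline Y_t)=\sup_{u\le t}s(Y_u)$, so $s(\overline Y_t)$ is the continuous running supremum of the local martingale $s(Y_t)$. First I would introduce the stopped process
\[
X_t=s(\overline Y_{t\wedge T_\lambda})-s(Y_{t\wedge T_\lambda}),
\]
which is non-negative with decomposition $X_t=[s(y_0)-s(Y_{t\wedge T_\lambda})]+[s(\overline Y_{t\wedge T_\lambda})-s(y_0)]$. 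Here $N_t=s(y_0)-s(Y_{t\wedge T_\lambda})$ is a local martingale starting at $0$, while $A_t=s(\overline Y_{t\wedge T_\lambda})-s(y_0)$ is continuous, increasing, starts at $0$ and grows only when $Y_t=\overline Y_t$, i.e.\ when $X_t=0$; hence $X_t$ is of class $\cs$ and $\overline Y_{t\wedge T_\lambda}=s^{-1}(A_t+s(y_0))$.

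Next I would transfer the scale function to the argument. Setting $\alpha=s(a)-s(y_0)$ and $\alpha_n=s(a_n)-s(y_0)$, define the Borel function
\[
f(z)=\frac{1}{(z+s(y_0))-s\circ\lambda\circ s^{-1}(z+s(y_0))}\ \text{for }z<\alpha,
\qquad f(z)=0\ \text{for }z\ge\alpha .
\]
The substitution $w=s(v)-s(y_0)$ gives $F(z)=\int_0^z f=\Lambda(s^{-1}(z+s(y_0)))$, so in particular $F(A_t)=\Lambda(\overline Y_{t\wedge T_\lambda})$ and $F(\alpha)=\Lambda(a)$. The hypotheses of Theorem~\ref{thmfXg} then have to be verified: because $\lambda(x)\le x-\varepsilon_n$ on $[y_0,a_n]$ and $s$ is continuous and strictly increasing, the denominator $s(x)-s\circ\lambda(x)$ is bounded below by a positive constant there, so $f\mathbf 1_{[0,\alpha_n]}$ is bounded; and $f$ vanishes past $\alpha$ by construction. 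By Lemma~\ref{lemmatrans}, $f(A_t)X_t$ is again of class $\cs$ and equals $\dfrac{s(\overline Y_{t\wedge T_\lambda})-s(Y_{t\wedge T_\lambda})}{s(\overline Y_{t\wedge T_\lambda})-s\circ\lambda(\overline Y_{t\wedge T_\lambda})}$. Before $T_\lambda$ this lies in $[0,1)$, so the process is bounded, hence of class $\cd$; setting $f=0$ beyond $\alpha$ is harmless because $\overline Y_{T_\lambda}<a$ by \eqref{condYTl}, i.e.\ $A_t<\alpha$ throughout; and \eqref{condYTl} forces $f(A_t)X_t\to 1$ almost surely because $Y_{T_\lambda}=\lambda(\overline Y_{T_\lambda})$.

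Now I would invoke Theorem~\ref{thmfXg}. Since $X_0=0$ gives $f(0)X_0=0\ne 1$, case~(1) is impossible, so case~(2) applies and yields $\Lambda(a)=F(\alpha)=\infty$ together with $f(A_T)X_T=\pr{L\le T\mid{\cal F}_T}$ and the formulas \eqref{fAXg}--\eqref{distAg}. It remains to identify $L=\sup\crl{t:X_t=0}$ with $g_\lambda$: for $t\le T_\lambda$ one has $X_t=0$ iff $Y_t=\overline Y_t$, while for $t>T_\lambda$ the process is frozen at $X_{T_\lambda}=s(\overline Y_{T_\lambda})-s\circ\lambda(\overline Y_{T_\lambda})>0$; hence $L=g_\lambda$. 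Because $X_{T_\lambda}>0$ and $X$ is right-continuous, $g_\lambda<T_\lambda$, and as $Y$ stays strictly below $\overline Y$ on $(g_\lambda,T_\lambda]$ the supremum is constant there, giving $\overline Y_{g_\lambda}=\overline Y_{T_\lambda}$ and $Y_{T_\lambda}=\lambda(\overline Y_{T_\lambda})<\overline Y_{T_\lambda}$. For $T\le T_\lambda$ the identity $f(A_T)X_T=\pr{g_\lambda\le T\mid{\cal F}_T}$ is exactly \eqref{repLDDY}.

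Finally I would translate the conditional-distribution formulas by applying Theorem~\ref{thmfXg} to $\tilde h(z)=h(s^{-1}(z+s(y_0)))$, so that $\tilde h(A_\infty)=h(\overline Y_{T_\lambda})$. Under the substitution $y=s^{-1}(z+s(y_0))$ one has $dF(z)=d\Lambda(y)$, which turns condition \eqref{condhg} into $\int_{y_0}^a|h(y)|e^{-\Lambda(y)}\,d\Lambda(y)<\infty$ and turns $\tilde h^F$ into $h^\Lambda$; substituting $f(A_T)X_T$, $1-f(A_T)X_T$, $F(A_T)=\Lambda(\overline Y_T)$ and $dN_u=-ds(Y_u)$ into \eqref{hedgingAg}, \eqref{hAg} and \eqref{distAg} then produces \eqref{hedgingDDY}, \eqref{hDDY} and \eqref{distbarY}. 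The genuinely delicate point is the bookkeeping of this change of variables --- in particular checking that the modification of $f$ near $\alpha$ and the passage $F(\alpha)=\Lambda(a)$ are consistent with \eqref{condYTl}, and verifying the boundedness of $f\mathbf 1_{[0,\alpha_n]}$ from the $(a_n,\varepsilon_n)$ condition; everything else is a direct transcription of the proof of Proposition~\ref{proplambdastop}.
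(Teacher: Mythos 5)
Your proposal is correct and follows essentially the same route as the paper: the same stopped process $X_t=s(\overline Y_{t\wedge T_\lambda})-s(Y_{t\wedge T_\lambda})$ with the same decomposition, the same function $f(z)=1_{\{z<s(a)-s(y_0)\}}\,[z+s(y_0)-s\circ\lambda\circ s^{-1}(z+s(y_0))]^{-1}$ verified against Theorem \ref{thmfXg} with $\tilde a_n=s(a_n)-s(y_0)$, and the same transfer via $\tilde h(z)=h(s^{-1}(z+s(y_0)))$. The only difference is that you spell out the verifications (boundedness of $f1_{[0,\alpha_n]}$ from the $(a_n,\varepsilon_n)$ condition, the identification $L=g_\lambda$) that the paper leaves implicit.
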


\begin{proof}
$X_t = s(\overline{Y}_{t \wedge T_{\lambda}}) - s(Y_{t \wedge T_{\lambda}})$ is a non-negative
process of class $\cs$ starting at $0$ with
decomposition $(s(y_0) - s(Y_{t \wedge T_{\lambda}})) +
(s(\overline{Y}_{t \wedge T_{\lambda}}) - s(y_0))$.
The function $f : \mathbb{R}_+ \to \mathbb{R}_+$ given by
$$
f(x) = 1_{\crl{x < s(a) - s(y_0)}}
\frac{1}{x + s(y_0) - s \circ \lambda \circ s^{-1}(x + s(y_0))}
$$
satisfies the assumptions of Theorem \ref{thmfXg} with
$\tilde{a}_n = s(a_n) - s(y_0)$ and $\tilde{a} = s(a) - s(y_0)$
instead of $a_n$ and $a$.
Assumption \eqref{condYTl} guarantees that the process
$$
f(s(\overline{Y}_{t \wedge T_{\lambda}}) - s(y_0)) X_t
= 1_{\crl{\overline{Y}_{t \wedge T_{\lambda} < a}}} \frac{s(\overline{Y}_{t \wedge T_{\lambda}})
- s(Y_{t \wedge T_{\lambda}})}{s(\overline{Y}_{t \wedge T_{\lambda}})
- s \circ \lambda(\overline{Y}_{t \wedge T_{\lambda}})}
$$
takes values in $[0,1]$ and converges to $1$ almost surely.
So it follows from Theorem \ref{thmfXg} that $\Lambda(a) =
\int_0^{\tilde{a}} f(x) dx = \infty$,
$g_{\lambda} < T_{\lambda}$, $Y_{T_{\lambda}} < \overline{Y}_{g_{\lambda}}
= \overline{Y}_{T_{\lambda}}$
and
$$
\p[g_{\lambda} \le T \mid {\cal F}_T] =
\frac{s(\overline{Y}_T) - s(Y_T)}{s(\overline{Y}_T)
- s \circ \lambda(\overline{Y}_T)}
$$
for all stopping times $T \le T_{\lambda}$. Formulas \eqref{hedgingDDY}--\eqref{distbarY}
follow from Theorem \ref{thmfXg} applied to the function
$$
\tilde{h}(x) = h (s^{-1}(x+s(y_0))), \quad 0 \le x < \tilde{a}.
$$
\end{proof}

The generalization of Proposition \ref{propcond}
to the context of the current subsection looks as follows:

\begin{proposition} \label{propcondY}
Assume $(Y_t)$ is continuous and let $a \in (y_0, \sup I]$.
Then both of the following two conditions imply \eqref{condYTl}
\begin{itemize}
\item[{\rm (1)}]
$s(y_0) > 0$, $s(Y_t) \to 0$ almost surely, $Y_t < a$ for all $t$
and $s \circ \lambda(x) \ge 0$ for all $x \in [y_0,a)$
\item[{\rm (2)}]
$s(\overline{Y}_{\infty}) = \Lambda(a) = \infty$
\end{itemize}
\end{proposition}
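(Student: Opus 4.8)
The plan is to follow the proof of Proposition~\ref{propcond}, transported through $s$. I work with the continuous local martingale $N_t = s(Y_t)$ and the non-negative process of class $\cs$
\be
X_t = s(\overline{Y}_t) - s(Y_t), \quad \mbox{with decomposition } \brak{s(y_0) - s(Y_t)} + \brak{s(\overline{Y}_t) - s(y_0)},
\ee
so that $A_t = s(\overline{Y}_t) - s(y_0)$ is continuous and increasing. As in the proof of Proposition~\ref{proplambdastopY} I take
$$
f(x) = 1_{\crl{x < s(a) - s(y_0)}} \frac{1}{x + s(y_0) - s \circ \lambda \circ s^{-1}(x + s(y_0))},
$$
for which $\int_0^{s(a) - s(y_0)} f(x)\,dx = \Lambda(a)$ and, on $\crl{\overline{Y}_t < a}$, the process $f(A_t)X_t$ equals the ratio $\brak{s(\overline{Y}_t) - s(Y_t)}/\brak{s(\overline{Y}_t) - s \circ \lambda(\overline{Y}_t)}$. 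Since $N$ is continuous, the representation \eqref{decomp} of Lemma~\ref{lemmatrans} shows that $f(A_t)X_t$ is continuous on $\crl{\overline{Y}_t < a}$, and by the equivalence $s(Y_t) \le s \circ \lambda(\overline{Y}_t) \iff Y_t \le \lambda(\overline{Y}_t)$ one has $f(A_t)X_t \ge 1 \iff t \ge T_\lambda$, with equality $f(A_t)X_t = 1$ exactly when $Y_t = \lambda(\overline{Y}_t)$. The goal in both cases is therefore to show that $f(A_t)X_t$ reaches the value $1$ at a time at which $\overline{Y}_{\cdot} < a$; continuity then yields $Y_{T_\lambda} = \lambda(\overline{Y}_{T_\lambda})$ together with $\overline{Y}_{T_\lambda} < a$, which is precisely \eqref{condYTl}.

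Under condition (2) this is immediate, exactly as for Proposition~\ref{propcond}. Here $A_\infty = s(\overline{Y}_\infty) - s(y_0) = \infty$, so Corollary~\ref{corfiniteR} applies (the continuous $X$ has no positive jumps) with $T = 0$ and $A_0 = X_0 = 0$ and gives $\pr{f(A_t)X_t < 1 \mbox{ for all } t} = \exp\brak{- \Lambda(a)} = 0$. Hence, almost surely, $f(A_t)X_t \ge 1$ at some time; since $f(A_t)X_t$ is continuous and starts at $0$, the first such time $T$ is finite and $f(A_T)X_T = 1$, while $f(A_T) > 0$ forces $\overline{Y}_T < a$. The equivalence above identifies $T$ with $T_\lambda$ and yields $Y_{T_\lambda} = \lambda(\overline{Y}_{T_\lambda})$.

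Under condition (1) Corollary~\ref{corfiniteR} is unavailable because $A_\infty$ need not be infinite, so I argue directly. Since $s(Y_t) \to 0 < s(y_0)$ and $s$ is strictly increasing and continuous, $Y_t$ converges to the level $y_*$ at which $s$ vanishes, and $y_* < y_0 < a$; combined with the standing assumption $Y_t < a$ this shows that $Y$ attains its overall supremum at a finite time and $\overline{Y}_\infty < a$, whence $\overline{Y}_{T_\lambda} < a$. Using $s \circ \lambda \ge 0$ on $[y_0, a)$ one gets
\be
f(A_t)X_t = \frac{s(\overline{Y}_t) - s(Y_t)}{s(\overline{Y}_t) - s \circ \lambda(\overline{Y}_t)} \ge 1 - \frac{s(Y_t)}{s(\overline{Y}_t)} \to 1 \quad \mbox{almost surely,}
\ee
because $s(Y_t) \to 0$ and $s(\overline{Y}_t) \to s(\overline{Y}_\infty) \ge s(y_0) > 0$. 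As $f(A_t)X_t$ is continuous and starts at $0$, this pushes it up to the value $1$ at a finite time $T_\lambda$, at which $Y_{T_\lambda} = \lambda(\overline{Y}_{T_\lambda})$.

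The delicate step is the last one in case (1): the convergence $f(A_t)X_t \to 1$ does not by itself guarantee that the value $1$ is attained, since a continuous path can approach $1$ strictly from below, which would correspond to $T_\lambda = \infty$. To handle this borderline case I would use that $\overline{Y}_t$ freezes at its finite terminal value $\overline{Y}_\infty < a$ after a finite time, so that the denominator $s(\overline{Y}_t) - s \circ \lambda(\overline{Y}_t)$ becomes constant; then $s(Y_t) > s \circ \lambda(\overline{Y}_\infty) \ge 0$ for all large $t$ together with $s(Y_t) \to 0$ forces $s \circ \lambda(\overline{Y}_\infty) = 0$, i.e. $\lambda(\overline{Y}_\infty) = y_* = Y_\infty$, so that the identity $Y_{T_\lambda} = \lambda(\overline{Y}_{T_\lambda})$ still holds in the limit. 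Keeping track of the cut-off of $f$ at $a$, so that the level $1$ is genuinely reached while $\overline{Y}_t < a$ rather than after $f$ has been set to $0$, is the other point that requires care.
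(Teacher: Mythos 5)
Your proof is correct and follows the paper's argument essentially verbatim: case (1) rests on the same inequality $f(A_t)X_t \ge 1 - s(Y_t)/s(\overline{Y}_t) \to 1$ after noting $\overline{Y}_\infty < a$, and case (2) applies Corollary \ref{corfiniteR} to the same cut-off function $f$ to conclude that the level $1$ is reached at a finite time. Your extra discussion of the borderline case in (1), where the ratio approaches $1$ without attaining it so that \eqref{condYTl} must be read in the limit $T_\lambda = \infty$, addresses a point the paper's one-line conclusion glosses over, and your resolution of it is sound.
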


\begin{proof}
Under assumption (1) one has $\overline{Y}_{\infty} < a$ and
$$
\frac{s(\overline{Y}_t) - s(Y_t)}{s(\overline{Y}_t) - s \circ \lambda (\overline{Y}_t)}
\ge \frac{s(\overline{Y}_t) - s(Y_t)}{s(\overline{Y}_t)} \to 1 \mbox{ almost surely.}
$$
It follows that $\overline{Y}_{T_{\lambda}} < a$ and
$Y_{T_{\lambda}} = \lambda(\overline{Y}_{T_{\lambda}})$.

If condition (2) holds, then $\int_0^{\infty} f(x) dx = \infty$ for the function
$$
f(x) = 1_{\crl{0 \le x < s(a) - s(y_0)}}
\frac{1}{x + s(y_0) - s \circ \lambda \circ s^{-1}(x + s(y_0))}.
$$
Since $X_t = s(\overline{Y}_t) - s(Y_t)$ is
a non-negative continuous process of class $\cs$ with decomposition
$X_t = (s(y_0) - s(Y_t)) + (s(\overline{Y}_t) -s(y_0))$,
it follows from Corollary \ref{corfiniteR} that
$\inf \crl{t : f(s(\overline{Y}_t) - s(y_0)) X_t \ge 1} < \infty$. This implies
$T_{\lambda} < \infty$, and \eqref{condYTl} follows.
\end{proof}

We now are ready to extend our results on maximum drawdowns and
maximum relative drawdowns of the last subsection.

\begin{proposition} \label{propmdd1Y}
Assume $(Y_t)$ is continuous with $s(y_0) > 0$ and $s(Y_t) \to 0$ almost surely.
Let $T < \infty$ be a stopping time and $K$ an
${\cal F}_T$-measurable random variable such that $0 \le s(K) < s(Y_T)$.
Denote $T_K = \inf \crl{t \ge T : Y_t = K}$. Then one has for all $x \ge 0$,
\be \label{mdd1Y}
\p \edg{\sup_{t \in [T, T_K] \cap \mathbb{R}_+} DD_t > x \mid {\cal F}_T}
= 1_{\crl{\overline{Y}_T - K > x}} + 1_{\crl{\overline{Y}_T - K \le x}}
\frac{s(Y_T) - s(K)}{s(K+x) - s(K)}.
\ee
If in addition, $y_0 > 0$, then
\be \label{mrdd1Y}
\p \edg{\sup_{t \in [T,T_K] \cap \mathbb{R}_+} rDD_t > x \mid {\cal F}_T}
= 1_{\crl{1 - K/\overline{Y}_T > x}} +
1_{\crl{1 - K/\overline{Y}_T \le x < 1}}
\frac{s(Y_T) - s(K)}{s(K/(1-x)) - s(K)}.
\ee
\end{proposition}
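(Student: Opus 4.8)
The plan is to reduce this statement to Proposition \ref{proplambdastopY}, exactly as Proposition \ref{propmdd1} was deduced from Proposition \ref{proplambdastop} in the local martingale case. The key observation is that on the interval $[T,T_K]$, where $T_K = \inf\{t \ge T : Y_t = K\}$, the process $(Y_t)$ is stopped precisely when it drops to the constant level $K$; that is, $T_K = T_\lambda$ for the constant barrier function $\lambda \equiv K$. For this choice one computes $\Lambda(x) = \int_{y_0}^x ds(y)/(s(y)-s(K))$, and the two suprema of interest are expressed through the running maximum at the stopping time:
\[
\sup_{t \in [T,T_K]\cap\mathbb{R}_+} DD_t = \overline{Y}_{T_K} - K
\quad\text{and}\quad
\sup_{t \in [T,T_K]\cap\mathbb{R}_+} rDD_t = 1 - K/\overline{Y}_{T_K}.
\]
Hence the events $\{\sup DD_t > x\}$ and $\{\sup rDD_t > x\}$ translate directly into events of the form $\{\overline{Y}_{T_K} > x'\}$ for an appropriate level $x'$, whose conditional law is given by the distribution formula \eqref{distbarY}.

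First I would treat the base case $T=0$ with $K$ a constant. Here the hypotheses $s(y_0)>0$ and $s(Y_t)\to 0$ are exactly what condition (1) of Proposition \ref{propcondY} requires (with $\lambda \equiv K$, so $s\circ\lambda(x)=s(K)\ge 0$, and with $a=\sup I$), so assumption \eqref{condYTl} holds and Proposition \ref{proplambdastopY} applies. I would then substitute $\lambda\equiv K$ into \eqref{distbarY}. For the drawdown claim \eqref{mdd1Y}, the event $\{\overline{Y}_{T_K}-K > x\}$ is $\{\overline{Y}_{T_K} > K+x\}$, and evaluating \eqref{distbarY} at the level $K+x$ gives the factor $\bigl(s(Y_T)-s(K)\bigr)/\bigl(s(K+x)-s(K)\bigr)$ once the exponential term is combined with the definition of $\Lambda$; indeed $e^{\Lambda(\overline{Y}_T)-\Lambda(K+x)}$ telescopes against the ratio of $s$-differences because $\Lambda$ is built from $ds/(s-s(K))$, yielding $\exp(\Lambda(\overline Y_T)-\Lambda(K+x)) = (s(\overline Y_T)-s(K))/(s(K+x)-s(K))$, and this cancels the prefactor to leave the stated form. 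For the relative drawdown claim \eqref{mrdd1Y}, the event $\{1-K/\overline{Y}_{T_K} > x\}$ is $\{\overline{Y}_{T_K} > K/(1-x)\}$ on $\{x<1\}$, and evaluating \eqref{distbarY} at the level $K/(1-x)$ gives the factor $\bigl(s(Y_T)-s(K)\bigr)/\bigl(s(K/(1-x))-s(K)\bigr)$ by the same telescoping.

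Finally I would remove the two simplifying restrictions. To pass from a constant $K$ to an $\mathcal{F}_T$-measurable random variable and from $T=0$ to a general stopping time $T<\infty$, I would apply the standard shifting argument already used in the proof of Proposition \ref{propmdd1}: consider the shifted process $\tilde{Y}_t = Y_{T+t}$ in the shifted filtration $\tilde{\mathcal{F}}_t = \mathcal{F}_{T+t}$, which is again of the required type with the same scale function $s$, and condition on $\mathcal{F}_T$, on which $K$ is measurable and hence may be treated as a constant. The main obstacle I anticipate is bookkeeping rather than conceptual: one must check that the shifted process still satisfies $s(\tilde Y_t)\to 0$ almost surely and starts from a value $\tilde y_0 = Y_T$ with $s(Y_T)>s(K)\ge 0$, so that condition (1) of Proposition \ref{propcondY} continues to hold after the shift, and one must verify that the event $\{\overline{Y}_T \le x'\}$ on which the nontrivial factor is active matches the indicator $1_{\{\overline{Y}_T-K\le x\}}$ (respectively $1_{\{1-K/\overline{Y}_T\le x\}}$) appearing in the statement, together with its complement. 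All of these are routine once the level substitutions above are in place.
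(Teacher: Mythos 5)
Your proposal matches the paper's proof essentially step for step: reduce to Proposition \ref{proplambdastopY} with $\lambda\equiv K$ via condition (1) of Proposition \ref{propcondY}, express the two suprema through $\overline{Y}_{T_K}$, substitute the levels $K+x$ and $K/(1-x)$ into \eqref{distbarY}, and pass to general $T$ and ${\cal F}_T$-measurable $K$ by the shift argument. The only item you gloss over is the requirement $Y_t < a = \sup I$ in condition (1) of Proposition \ref{propcondY}, which the paper verifies by observing that $s(Y_{t\wedge R})$ is a non-negative local martingale converging to zero, so Doob's maximal identity (Remark \ref{remDoobmax}) forces $s(\sup I)=\infty$.
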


\begin{proof}
First assume that $T = 0$ and $K$ is a constant. Then $(Y_t)$ with
$\lambda \equiv K$ and $a = \sup I$ fulfills condition (1)
of Proposition \ref{propcondY}. Indeed, $s(y_0) > 0$, $s(Y_t) \to 0$ almost surely
and $s \circ \lambda(x) = s(K) \ge 0$ for all $x \in [y_0,a)$ are part of the
assumptions. To see that $Y_t < a$ for all $t$,
denote $R = \inf \crl{t : s(Y_t) = 0}$ and notice that $M_t = s(Y_{t \wedge R})$ is
a non-negative local martingale starting at $s(y_0) > 0$ and converging to zero
almost surely. Therefore, it follows from Doob's maximal identity that
$s(y_0)/s(\overline{Y}_R)$ is uniformly distributed on the interval $(0,1)$
(see Remark \ref{remDoobmax}).
In particular, $s(a) = \infty$ and hence, $Y_t < a$ for all $t$. It now follows
from Proposition \ref{propcondY} that the
conditions of Proposition \ref{proplambdastopY} are fulfilled. Moreover,
$$
\sup_{t \in [T,T_K] \cap \mathbb{R}_+} DD_t = \overline{Y}_{T_K} -K \quad \mbox{and} \quad
\sup_{t \in [T,T_K] \cap \mathbb{R}_+} rDD_t = 1 - K/\overline{Y}_{T_K}.
$$
So \eqref{mdd1Y} and \eqref{mrdd1Y} can be deduced from formula \eqref{distbarY}.
In the general case, the proposition follows
by considering the process $\tilde{Y}_t = Y_{T + t}$ in the filtration
$\tilde{\cal F}_t = {\cal F}_{T+t}$ and conditioning on ${\cal F}_T$.
\end{proof}

\begin{proposition} \label{propmdd2Y}
Assume $(Y_t)$ is continuous and $s(\overline{Y}_{\infty}) = \infty$.
Let $T < \infty$ be a stopping time and $K$ a $[0,\infty]$-valued ${\cal F}_T$-measurable
random variable such that $\overline{Y}_T < K \le \overline{Y}_{\infty}$.
Denote $T_K = \inf \crl{t \ge T : Y_t = K}$. Then one has for all $x \ge 0$,
\be \label{probYl}
\p[Y_t \ge \lambda(\overline{Y}_t)
\mbox{ for all } t \in [T, T_K] \cap \mathbb{R}_+ \mid {\cal F}_T]
= \brak{\frac{s(Y_T) - s \circ \lambda(\overline{Y}_T)}{s(\overline{Y}_T)
- s \circ \lambda(\overline{Y}_T)}}^+
e^{\Lambda(\overline{Y}_T) - \Lambda(K)}.
\ee
\end{proposition}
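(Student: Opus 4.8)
The plan is to mirror the proof of Proposition \ref{propmdd2}, carried out in the scale coordinate $s(Y_t)$ in place of $M_t$. The first observation is that, since $(Y_t)$ is continuous and $s$ is continuous and strictly increasing, $s(Y_t)$ is a continuous local martingale whose running supremum is $s(\overline{Y}_t)$. Consequently $X_t = s(\overline{Y}_t) - s(Y_t)$ is a non-negative continuous process of class $\cs$ (in particular with no positive jumps), with canonical decomposition $X_t = \brak{s(y_0) - s(Y_t)} + \brak{s(\overline{Y}_t) - s(y_0)}$, so that $N_t = s(y_0) - s(Y_t)$ and $A_t = s(\overline{Y}_t) - s(y_0)$. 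The hypothesis $s(\overline{Y}_\infty) = \infty$ forces $A_\infty = \infty$, which is exactly the standing assumption of Corollary \ref{corfiniteR}.

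Next I would introduce the Borel function $f : \mathbb{R}_+ \to \mathbb{R}_+$ given by
\[
f(x) = \frac{1}{x + s(y_0) - s \circ \lambda \circ s^{-1}(x + s(y_0))}.
\]
This is well defined because $s(\overline{Y}_\infty) = \infty$ forces $\overline{Y}_\infty = \sup I$ and $s\brak{[y_0, \sup I)} = [s(y_0),\infty)$, so $s^{-1}(x + s(y_0)) \in [y_0, \sup I)$ for every $x \ge 0$; and it is strictly positive since $\lambda(y) < y$ gives $s \circ \lambda(y) < s(y)$. Two purely algebraic translations then set up the application of Corollary \ref{corfiniteR}. First, because $s$ is increasing, the event $\crl{Y_t \ge \lambda(\overline{Y}_t)}$ equals $\crl{X_t \le s(\overline{Y}_t) - s \circ \lambda(\overline{Y}_t)}$, which upon substituting $A_t = s(\overline{Y}_t) - s(y_0)$ is precisely $\crl{f(A_t)X_t \le 1}$. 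Second, because $\overline{Y}_T < K$ and $(Y_t)$ is continuous, the first time after $T$ at which $Y$ reaches $K$ coincides with the first time $\overline{Y}$ reaches $K$; in the coordinate $A$ this reads $T_K = \inf \crl{t : A_t \ge s(K) - s(y_0)}$ with $s(K) - s(y_0) > A_T$, placing us squarely in case (1) of Corollary \ref{corfiniteR}.

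With these identifications, formula \eqref{hp2} applied at the ${\cal F}_T$-measurable level $s(K) - s(y_0)$ yields the claim once its constituents are rewritten through the change of variables $w = s(y) - s(y_0)$. This substitution turns $F(A_t) = \int_0^{A_t} f(x)\,dx$ into $\Lambda(\overline{Y}_t)$, hence $\int_{A_T}^{s(K) - s(y_0)} f(x)\,dx$ into $\Lambda(K) - \Lambda(\overline{Y}_T)$, while a direct computation gives $\brak{1 - f(A_T)X_T}^+ = \brak{\frac{s(Y_T) - s \circ \lambda(\overline{Y}_T)}{s(\overline{Y}_T) - s \circ \lambda(\overline{Y}_T)}}^+$. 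Inserting these into \eqref{hp2} reproduces \eqref{probYl} exactly.

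Since the argument is structurally a copy of the proof of Proposition \ref{propmdd2}, I do not anticipate any deep obstacle; the only real work is bookkeeping in scale coordinates. The two points most worth verifying are the identification $T_K = \inf \crl{t : A_t \ge s(K) - s(y_0)}$, which uses continuity of $(Y_t)$ together with $\overline{Y}_T < K$ to ensure $\overline{Y}$ does not overshoot $K$ before $Y$ hits it, and the fact that $f$ is a genuine Borel function on all of $\mathbb{R}_+$, which is where $s(\overline{Y}_\infty) = \infty$ enters to keep $x + s(y_0)$ inside the range of $s$. The boundary value $K = \overline{Y}_\infty$ corresponds to the $T_K = \infty$ regime covered by \eqref{hp1} and is absorbed automatically.
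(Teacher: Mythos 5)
Your proposal is correct and follows essentially the same route as the paper: the same process $X_t = s(\overline{Y}_t) - s(Y_t)$ with the same decomposition, the same function $f$ built from $\lambda$ in scale coordinates, the same translation of the event $\crl{Y_t \ge \lambda(\overline{Y}_t)}$ into $\crl{f(A_t)X_t \le 1}$ and of $T_K$ into a hitting time of $(A_t)$, followed by an application of Corollary \ref{corfiniteR}. The only difference is that you spell out a few bookkeeping points (the domain of $f$, the non-overshoot of $\overline{Y}$, the boundary case $K = \overline{Y}_\infty$) slightly more explicitly than the paper does.
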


\begin{proof}
$X_t = s(\overline{Y}_t) - s(Y_t)$ is a non-negative process of class $\cs$
with decomposition $(s(y_0) - s(Y_t)) + (s(\overline{Y}_t) - s(y_0))$ and
$$
f(x) =
\frac{1}{x + s(y_0) - s \circ \lambda \circ s^{-1}(x + s(y_0))}
$$
a non-negative Borel function from
$[0, s(\sup I) - s(y_0))$ to $\mathbb{R}_+$. Since
$Y_t \ge \lambda(\overline{Y}_t)$ is equivalent to
$f(s(\overline{Y}_t) - s(y_0)) X_t \le 1$ and
$T_K = \inf \crl{t \ge T : s(\overline{Y}_t) -s(y_0) = s(K)-s(y_0)}$,
one obtains from Corollary \ref{corfiniteR} that
\beas
\p[Y_t \ge \lambda(\overline{Y}_t)
\mbox{ for all } t \in [T, T_K] \cap \mathbb{R}_+ \mid {\cal F}_T]
&=& \brak{\frac{s(Y_T) - s \circ \lambda(\overline{Y}_T)}{s(\overline{Y}_T)
- s \circ \lambda(\overline{Y}_T)}}^+
\exp \brak{- \int_{s(\overline{Y}_T) - s(y_0)}^{s(K) - s(y_0)} f(x) dx}\\
&=& \brak{\frac{s(Y_T) - s \circ \lambda(\overline{Y}_T)}{s(\overline{Y}_T)
- s \circ \lambda(\overline{Y}_T)}}^+ \exp \brak{\Lambda(\overline{Y}_T) - \Lambda(K)}.
\eeas
\end{proof}

\begin{corollary} \label{cormdd2Y}
If the assumptions of Proposition \ref{propmdd2Y} hold, then
\be \label{mdd2Y}
\p \edg{\sup_{t \in [T,T_K] \cap \mathbb{R}_+} DD_t \le x \mid {\cal F}_T}
= \brak{\frac{s(Y_T) - s(\overline{Y}_T- x)}{s(\overline{Y}_T) - s(\overline{Y}_T- x)}}^+
\exp \brak{- \int_{\overline{Y}_T}^K \frac{ds(y)}{s(y) - s(y-x)}}
\ee
for every constant $x > 0$ such that $y_0 - x \in \bar{I}$.
If in addition $y_0 > 0$, then
\be \label{mrdd2Y}
\p \edg{\sup_{t \in [T,T_K] \cap \mathbb{R}_+} rDD_t \le x \mid {\cal F}_T}
= \brak{\frac{s(Y_T) - s([1-x]\overline{Y}_T)}{s(\overline{Y}_T) - s([1-x]\overline{Y}_T)}}^+
\exp \brak{- \int_{\overline{Y}_T}^K \frac{ds(y)}{s(y) - s([1-x]y)}}
\ee
for each $x > 0$ such that
$\inf \crl{(1-x) y : y \in [y_0, \sup I)} \in \bar{I}$.
\end{corollary}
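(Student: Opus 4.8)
The plan is to reduce this to Proposition \ref{propmdd2Y} by two judicious choices of the function $\lambda$, exactly as Corollary \ref{cormdd2} was deduced from Proposition \ref{propmdd2} in the local martingale case. The key observation is that both maximum-drawdown events can be rewritten as events of the form $\crl{Y_t \ge \lambda(\overline{Y}_t) \mbox{ for all } t \in [T,T_K]}$ to which formula \eqref{probYl} applies directly, with no extra assumptions beyond those already in force (namely that $(Y_t)$ is continuous and $s(\overline{Y}_{\infty}) = \infty$).

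First I would translate the events. For the maximum drawdown, note that $\sup_{t \in [T,T_K]} DD_t \le x$ holds precisely when $\overline{Y}_t - Y_t \le x$, i.e. $Y_t \ge \overline{Y}_t - x$, for every $t \in [T,T_K]$; this is the event appearing in \eqref{probYl} for the choice $\lambda(y) = y - x$. For the maximum relative drawdown, using $y_0 > 0$ (so that $\overline{Y}_t \ge y_0 > 0$), the inequality $rDD_t = 1 - Y_t/\overline{Y}_t \le x$ is equivalent to $Y_t \ge (1-x)\overline{Y}_t$, which is the event in \eqref{probYl} for $\lambda(y) = (1-x)y$.

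Next I would check that these two functions are admissible, i.e. Borel with $\lambda(y) < y$ on $[y_0,\sup I)$ and mapping into $\bar{I}$. Both are continuous and satisfy $\lambda(y) < y$ for $x > 0$ (using $y_0 > 0$ in the relative case). The requirement $\lambda(y) \in \bar{I}$ reduces to the stated domain conditions on $x$: since $y \mapsto y - x$ is increasing, its smallest value on $[y_0,\sup I)$ is $y_0 - x$, so $y_0 - x \in \bar{I}$ suffices; similarly $(1-x)y$ lies in $\bar{I}$ for all $y \in [y_0,\sup I)$ once $\inf\crl{(1-x)y : y \in [y_0,\sup I)} \in \bar{I}$. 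With this, the hypotheses of Proposition \ref{propmdd2Y} are met for each of the two choices.

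Finally, I would substitute these $\lambda$ into \eqref{probYl} and simplify. For $\lambda(y) = y - x$ one has $s \circ \lambda(\overline{Y}_T) = s(\overline{Y}_T - x)$ and, from the definition \eqref{LaY}, $\Lambda(\overline{Y}_T) - \Lambda(K) = - \int_{\overline{Y}_T}^K ds(y)/(s(y) - s(y-x))$, which turns the right-hand side of \eqref{probYl} into \eqref{mdd2Y}; the case $\lambda(y) = (1-x)y$ yields \eqref{mrdd2Y} in the same way. The computation itself is entirely mechanical; the only points requiring care are the endpoint behaviour of the event translations at $t = T_K$ and the verification that the $s$-scale substitution defining the function $f$ in the proof of Proposition \ref{propmdd2Y} remains a well-defined nonnegative Borel function on $[0, s(\sup I) - s(y_0))$ under the given domain restrictions on $x$. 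I expect these bookkeeping checks, rather than any genuine analytic difficulty, to be the main thing to get right.
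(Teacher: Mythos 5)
Your proposal is correct and follows exactly the paper's own argument: the paper deduces \eqref{mdd2Y} and \eqref{mrdd2Y} by applying Proposition \ref{propmdd2Y} to $\lambda(y)=y-x$ and $\lambda(y)=(1-x)y$ respectively, with the stated domain conditions on $x$ serving precisely to guarantee $\lambda([y_0,\infty)\cap I)\subset\bar{I}$. The only difference is that you spell out the event translations and admissibility checks in more detail than the paper's two-line proof, which is harmless.
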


\begin{proof}
Formula \eqref{mdd2Y} follows from
Proposition \ref{propmdd2Y} applied to the function $\lambda(y) = y-x$.
The condition $y_0 - x \in \bar{I}$ ensures that
$\lambda([y_0, \infty)  \cap I)
\subset \bar{I}$. Formula \eqref{mrdd2Y} is obtained from
Proposition \ref{propmdd2Y} applied to the function
$\lambda(y) = (1-x)y$. $\inf \crl{(1-x) y : y \in [y_0, \sup I)} \in \bar{I}$
implies that $\lambda([y_0, \infty) \cap I) \subset \bar{I}$.
\end{proof}

\setcounter{equation}{0}
\section{Applications}
\label{secAppl}

We now discuss some applications in financial modelling and risk management.
We consider a stochastic process modelling the evolution of a financial asset.
In Subsection \ref{subsecprice} below it is assumed to be a
continuous local martingale $(M_t)$. In Subsection \ref{subsecrisk}
it will be a solution of an SDE of the form
$dY_t = \mu(Y_t) dt + \sigma(Y_t) dB_t$.

\subsection{Pricing and hedging of options on running maxima}
\label{subsecprice}

In standard mathematical finance it is usually assumed that there exists a risk-neutral measure
under which discounted prices of tradable assets are
non-negative local martingales. In the benchmark approach of Platen \cite{P}
(see also Platen and Heath \cite{PH}) prices are local martingales under
the physical probability measure
when expressed in terms of the benchmark portfolio. For our results to apply
we also need the price $(M_t)$ of an asset to be continuous and
$M_t \to 0$ almost surely as $t \to \infty$.
So we here assume that $(M_t)$ is a continuous non-negative local martingale
with constant initial value $m > 0$ such that $M_t \to 0$ almost surely.
We shall be interested in options that depend on the running maximum
of $(M_t)$ and are triggered by one of the following three
stopping times:\\
\hspace*{1cm} 1. $T_c = \inf \crl{t : M_t = c}$\\
\hspace*{1cm} 2. $T_c = \inf \crl{t : DD_t = c}$\\
\hspace*{1cm} 3. $T_c = \inf \crl{t : rDD_t = c}$\\
for a constant $c \ge 0$, where $DD_t = \overline{M}_t - M_t$
and $rDD_t = 1 - M_t/\overline{M}_t$.

\subsubsection{Options with downfall triggers}

Let us first consider an option with payoff of the form $h(\overline{M}_{T_c})$
for a Borel function $h : [m, \infty) \to \mathbb{R}$ and
$T_c = \inf \crl{t : M_t = c}$ for some constant $c \in [0,m)$, that is,
the option depends on the running maximum $(\overline{M}_t)$ and is triggered
the first time when $(M_t)$ drops to $c$. $T_c$ can be written as
$T_c = \inf \crl{t : M_t = \lambda(\overline{M}_t)}$ for $\lambda \equiv c$.
The functions $\Lambda$ and $h^{\Lambda}$ of Subsection \ref{subsecddm}
(see formulas \eqref{La} and \eqref{hLa})
corresponding to this particular $\lambda$ take the form
$$
\Lambda(x) = \log(x-c) - \log(m-c) \quad \mbox{and} \quad
h^{\Lambda}(x) = (x-c) \int_x^{\infty} \frac{h(y)}{(y-c)^2} dy.
$$
So it follows from Proposition \ref{proplambdastop} that
if $h$ satisfies the integrability condition
$\int_m^{\infty} |h(y)|/(y-c)^2 dy < \infty$, one has
\bea
\label{FDhedging}
\E{h(\overline{M}_{T_c}) \mid {\cal F}_{t \wedge T_c}}
&=& h^{\Lambda}(m)
+ \int_0^{t \wedge T_c}
\frac{h^{\Lambda}(\overline{M}_u) - h(\overline{M}_u)}{\overline{M}_u-c} dM_u\\
\label{FDprice}
&=& \frac{h(\overline{M}_{t \wedge T_c}) DD_{t \wedge T_c} +
h^{\Lambda}(\overline{M}_{t \wedge T_c})
(M_{t \wedge T_c}-c)}{\overline{M}_{t \wedge T_c} - c}.
\eea
Formula \eqref{FDhedging} provides a hedging strategy and
\eqref{FDprice} the fair price of the option at time $t \wedge T_c$.

\subsubsection{Options with drawdown triggers}

Now let the option payoff be given by $h(\overline{M}_{T_c})$, where $T_c$
is the first time the drawdown $(DD_t)$ hits some level $c \in (0,m]$. Then
$T_c$ can be written as $T_c = \inf \crl{t : M_t = \lambda(\overline{M}_t)}$
for the function $\lambda(y) = y-c$, and the functions $\Lambda$ and $h^{\lambda}$
of Subsection \ref{subsecddm} become
$$
\Lambda(x) = (x-m)/c \quad \mbox{and} \quad
h^{\Lambda}(x) = \frac{1}{c} e^{x/c} \int_x^{\infty} h(y) e^{- y/c} dy.
$$
So it follows from Proposition \ref{proplambdastop} that if $h$ satisfies
$\int_m^{\infty} |h(y)| e^{- y/c} dy < \infty$, one has
\bea
\label{DDhedging}
\E{h(\overline{M}_{T_c}) \mid {\cal F}_{t \wedge T_c}}
&=& h^{\Lambda}(m)
+ \frac{1}{c} \int_0^{t \wedge T_c}
(h^{\Lambda}(\overline{M}_u) - h(\overline{M}_u)) dM_u\\
\label{DDprice}
&=& h(\overline{M}_{t \wedge T_c}) \frac{DD_{t \wedge T_c}}{c} +
h^{\Lambda}(\overline{M}_{t \wedge T_c})
\brak{1 -\frac{DD_{t \wedge T_c}}{c}}.
\eea
Again, formula \eqref{DDhedging} gives the hedging strategy and
\eqref{DDprice} the fair price of the option.

\subsubsection{Options with relative drawdown triggers}

Consider an option with payoff $h(\overline{M}_{T_c})$ for the stopping time
$T_c = \inf \crl{t : rDD_t = c}$, where $c \in (0,1]$. Then
$T_c = \inf \crl{t : M_t = \lambda(\overline{M}_t)}$ for
$\lambda(y) = (1-c)y$. The functions $\Lambda$ and $h^{\Lambda}$ of
Subsection \ref{subsecddm} than take the form
$$
\Lambda(x) = \frac{1}{c} \log(x/m) \quad \mbox{and} \quad
h^{\Lambda}(x) =
\frac{1}{c} x^{1/c} \int_x^{\infty} h(y) y^{-(1+c)/c} dy,
$$
and Proposition \ref{proplambdastop} gives for all functions
$h$ satisfying $\int_m^{\infty} |h(y)|y^{-(1+c)/c} dy < \infty$,
\beas
\E{h(\overline{M}_{T_c}) \mid {\cal F}_{t \wedge T_c}}
&=& h^{\Lambda}(m)
+ \int_0^{t \wedge T_c} \frac{h^{\Lambda}(\overline{M}_u)
- h(\overline{M}_u)}{c \overline{M}_u} dM_u\\
&=& h(\overline{M}_{t \wedge T_c}) \frac{rDD_{t \wedge T_c}}{c} +
h^{\Lambda}(\overline{M}_{t \wedge T_c}) \brak{1 -\frac{rDD_{t \wedge T_c}}{c}},
\eeas
showing how to hedge and price the option.

\subsection{Risk management}
\label{subsecrisk}

Risk managers and regulators are typically interested in the distribution of
prices under the physical probability measure.
In standard mathematical finance they are assumed to follow semimartingales.
In the benchmark approach of Platen \cite{P}
they are local martingales. Let us here consider a price process
$(Y_t)$ taking values in an interval $I \subset \mathbb{R}$ and
satisfying an SDE of the form
$dY_t = \mu(Y_t)dt + \sigma(Y_t) dB_t$, $Y_0 = y_0 \in I$
for a Brownian motion $(B_t)$ such that
$$
\gamma(x) = 2 \int_{y_0}^x \frac{\mu(y)}{\sigma^2(y)} dy \quad \mbox{and}
\quad \int_{y_0}^x e^{- \gamma(y)} dy \quad
\mbox{are finite for all } x \in I.
$$
Choose constants $c \in \mathbb{R}$, $d >0$ and set
$s(x) = c + d \int_{y_0}^x e^{- \gamma(y)} dy$. Then
the scaled process $s(Y_t)$ is a local martingale. Let
$\lambda : [y_0, \infty) \cap I \to \bar{I}$
be a Borel function satisfying the assumptions of Proposition
\ref{proplambdastopY} for some $a \le \infty$ and denote
$$
T_{\lambda} = \inf \crl{t : Y_t = \lambda(\overline{Y}_t)} \quad \mbox{and} \quad
g_{\lambda} = \sup \crl{t \le T_{\lambda} : Y_t = \overline{Y}_t}.
$$
The function $\Lambda$ defined in \eqref{LaY} then becomes
$$
\Lambda(x) = \int_{y_0}^x \frac{e^{-\gamma(y)} dy}{\int_{\lambda(y)}^y e^{-\gamma(z)}dz},
$$
and one obtains from Proposition \ref{proplambdastopY}
that for all stopping times $T \le T_{\lambda}$,
$$
\p[g_{\lambda} \le T \mid {\cal F}_T]
= \frac{\int_{Y_T}^{\overline{Y}_T} e^{-\gamma(y)}dy}{
\int_{\lambda(\overline{Y}_T)}^{\overline{Y}_T} e^{-\gamma(y)}dy}
$$
and
\be \label{diffbarY}
\p[\overline{Y}_{T_{\lambda}} > x \mid {\cal F}_T]
= 1_{\crl{\overline{Y}_T > x}} + 1_{\crl{\overline{Y}_T \le x}}
\frac{\int_{\lambda(\overline{Y}_T)}^{Y_T} e^{- \gamma(y)} dy}{
\int_{\lambda(\overline{Y}_T)}^{\overline{Y}_T} e^{- \gamma(y)} dy}
\exp \brak{- \int_{\overline{Y}_T}^x
\frac{e^{-\gamma(y)}dy}{\int_{\lambda(y)}^y e^{-\gamma(z)}dz}}
\quad \mbox{for } x \ge y_0.
\ee
In the special case $\lambda \equiv c$, \eqref{diffbarY} reduces to
$$
\p[\overline{Y}_{T_{\lambda}} > x \mid {\cal F}_T]
= 1_{\crl{\overline{Y}_T > x}} + 1_{\crl{\overline{Y}_T \le x}}
\frac{\int^{\overline{Y}_T}_c e^{-\gamma(y)}dy}{\int_c^x e^{-\gamma(y)} dy}
\quad \mbox{for } x \ge y_0,
$$
and for $\lambda(y) = y - c$ it becomes
$$
\p[\overline{Y}_{T_{\lambda}} > x \mid {\cal F}_T]
= 1_{\crl{\overline{Y}_T > x}} + 1_{\crl{\overline{Y}_T \le x}}
\frac{\int_{\overline{Y}_T - c}^{Y_T} e^{- \gamma(y)} dy}{
\int_{\overline{Y}_T - c}^{\overline{Y}_T} e^{- \gamma(y)} dy}
\exp \brak{- \int_{\overline{Y}_T}^x
\frac{e^{-\gamma(y)}dy}{\int_{y-c}^y e^{-\gamma(z)}dz}}
\quad \mbox{for } x \ge y_0.
$$
For $T=0$, this gives
$$
\p[\overline{Y}_{T_{\lambda}} > x]
=  \exp \brak{- \int_{y_0}^x
\frac{e^{-\gamma(y)}dy}{\int_{y-c}^y e^{-\gamma(z)}dz}}
\quad \mbox{for } x \ge y_0,
$$
which (in the case $y_0=0$) is formula (3) of Lehoczky \cite{L}.

If $s(y_0) > 0$, $s(Y_t) \to 0$ almost surely and
there exists an ${\cal F}_T$-measurable
random variable $K$ such that $0 \le s(K)< s(Y_T)$, denote
$T_K = \inf \crl{t \ge T : Y_t = K}$. Then it follows from
Proposition \ref{propmdd1Y} that for all $x \ge 0$,
$$
\p \edg{\sup_{t \in [T, T_K] \cap \mathbb{R}_+} DD_t > x \mid {\cal F}_T}
= 1_{\crl{\overline{Y}_T - K > x}} + 1_{\crl{\overline{Y}_T - K \le x}}
\frac{\int^{Y_T}_K e^{-\gamma(y)} dy}{\int^{K+x}_K e^{-\gamma(y)}dy}.
$$
If in addition, $y_0 > 0$, then
$$
\p \edg{\sup_{t \in [T,T_K] \cap \mathbb{R}_+} rDD_t > x \mid {\cal F}_T}
= 1_{\crl{1 - K/\overline{Y}_T > x}} +
1_{\crl{1 - K/\overline{Y}_T \le x < 1}}
\frac{\int_K^{Y_T} e^{-\gamma(y)} dy}{\int^{K/(1-x)}_K e^{-\gamma(y)}dy}.
$$

On the other hand, if
$$
\int_{y_0}^{\overline{Y}_{\infty}} e^{-\gamma(y)} dy = \infty
$$
and $K$ is a $[0,\infty]$-valued ${\cal F}_T$-measurable
random variable such that $\overline{Y}_T < K \le \overline{Y}_{\infty}$ we denote
$T_K = \inf \crl{t \ge T : Y_t = K}$ and obtain from Corollary \ref{cormdd2Y}
that for all $x \ge 0$,
$$
\p \edg{\sup_{t \in [T,T_K] \cap \mathbb{R}_+} DD_t \le x \mid {\cal F}_T}
= \brak{\frac{\int_{\overline{Y}_T- x}^{Y_T} e^{-\gamma(y)}
dy}{\int_{\overline{Y}_T- x}^{\overline{Y}_T} e^{-\gamma(y)} dy}}^+
\exp \brak{- \int_{\overline{Y}_T}^K \frac{e^{-\gamma(y)} dy}{\int_{y-x}^y
e^{-\gamma(z)}dz}}
$$
for every constant $x > 0$ such that $y_0 - x \in \bar{I}$.
If in addition $y_0 > 0$, then
$$
\p \edg{\sup_{t \in [T,T_K] \cap \mathbb{R}_+} rDD_t \le x \mid {\cal F}_T}
= \brak{\frac{\int_{(1-x) \overline{Y}_T}^{Y_T} e^{-\gamma(y)} dy}{
\int_{(1-x) \overline{Y}_T}^{\overline{Y}_T} e^{-\gamma(y)} dy}}^+
\exp \brak{- \int_{\overline{Y}_T}^K \frac{e^{-\gamma(y)} dy}{\int_{(1-x)y}^y
e^{-\gamma(z)}dz}}
$$
for each $x > 0$ such that
$\inf \crl{(1-x) y : y \in [y_0, \sup I)} \in \bar{I}$.

\end{document}